\definecolor
{gray75}{gray}{0.75}
\definecolor
{gray85}{gray}{0.85}
\numberwithin{equation}{section}
\newtheorem{teo}{Theorem}[section]
\newtheorem{lem}{Lemma}[section]
\newtheorem{obs}{Remark}[section]
\newtheorem{col}{Corollary}[section]
\begin{document}
\title{\textbf{A Glioblastoma PDE-ODE model including chemotaxis and vasculature}}
	\author{ A. Fernández-Romero$^{2}$, F. Guillén-González$^{2}$\footnote{ORCID: 0000-0001-5539-5888}, A. Suárez$^{1\;2}$\footnote{ORCID: 0000-0002-3701-6204}.\\ \small{$^{1}$Corresponding author.}\\
	\small{$^{2}$Dpto. Ecuaciones Diferenciales y Análisis Numérico,}\\
	\small{Facultad de Matemáticas, Universidad de Sevilla. Sevilla, Spain.}\\
	\small{\texttt{afernandez61@us.es, guillen@us.es, suarez@us.es}}	
}
\date{}
\maketitle
\section*{\centering{\textbf{Abstract}}}
In this work we analyse a PDE-ODE problem modelling the evolution of a Glioblastoma, which includes chemotaxis term directed to vasculature. First, we obtain some a priori estimates for the (possible) solutions of the model. In particular, under some conditions on the parameters, we obtain that the system does not develop blow-up at finite time. In addition, we design a fully discrete finite element scheme for the model which preserves some pointwise estimates of the continuous problem. Later, we make an adimensional study in order to reduce the number of parameters. Finally, we detect the main parameters determining different width of the ring formed by proliferative and necrotic cells and different regular/irregular behaviour of the tumor surface.
\\
\\
\textbf{Mathematics Subject Classification.} $35\text{A}01,\;35\text{B}40,\;35\text{M}10,\;35\text{Q}92,\;47\text{J}35,\;92\text{B}05$\\
\textbf{Keywords:} Glioblastoma, Chemotaxis, PDE-ODE system, Numerical scheme.
\\
\\
The authors were supported by PGC2018-098308-B-I00 (MCI/AEI/FEDER, UE).

\section{Introduction}\label{introducion}
Among the group of brain tumors, the Glioblastoma (GBM) is the most aggressive form with a survival of a little more than one year \cite{Ostrom_2014}. Moreover, GBM differs from many solid tumors in the sense that they grow infiltratively into the brain tissue, there exists an important presence of necrosis and they produce a high proliferation tumor cells. For all these reasons, GBM is one of the cancer types with more interest in the mathematical oncology community (see \cite{Alfonso_2017, Baldock_2013, Protopapa_2018} and references therein).
\\

Some studies about the morphology of GBM are based in the magnetic resonance images (MRI) in order to obtain results related to prognosis and survival (see \cite{Molina_2019,Julian_2016,Victor_2019,Victor_2018}). Specifically, Molab\footnote{\href{Molab}{http://matematicas.uclm.es/molab/}} group classifies the GBM depending on the width of the tumor ring and/or the tumor surface regularity (see \cite{Julian_2016, Victor_2018} respectively). The study of \cite{Julian_2016} concludes that tumors with slim ring have better prognostic, specifically $7$ months of more survival than tumors with thick ring. In \cite{Victor_2018}, the survival of patients in relation to the surface growth, regular or irregular, of the GBM, show that tumors with a regular surface have better prognostic, more than $5$ moths of survival, than tumor with irregular surface. 
\\

In \cite{Jan_2014}, the authors use the Fisher-Kolmogorov equation to reproduce the infiltrative characteristic of the GBM. However, more complex mathematical models are also built to simulate phenomena such that the tumor ring and the regularity surface of the GBM. One model appears in \cite{Victor_2020} where the tumor ring is studied by a PDE-ODE system of two equations (proliferative tumor and necrosis). In \cite{Romero_2021,Romero2_2020}, the authors present a PDE-ODE system with three equations (proliferative tumor, necrosis and vasculature) which is able to capture different behaviours of tumor ring and regularity surface of the GBM via a nonlinear diffusion tumor increasing with vasculature.
\\
%

In this paper, we present a PDE-ODE system, also with three equations (tumor, necrosis and vasculature)  and we study the  biological behaviours of the GBM such as the tumor ring volume, studied in \cite{Victor_2020,Julian_2016}, and the regularity surface considered in \cite{Victor_2018}. Unlike the system considered in  \cite{Romero_2021,Romero2_2020}, we have included a chemotaxis term. This term has been already introduced to model the movement of  some populations towards a higher concentration of the chemical substance or another living organism, see for instance the reviews given in \cite{Bellomo,EC,HP,Hor,Per} and the references herein. Specifically, in this paper, we have included the chemotaxis term modelling the movement of tumor to vasculature.
\\

Some previous chemotactic PDE-ODE models have been extensively studied in the literature, see for instance \cite{Chaplain_2000,Sleeman_1997,Stevens_2000,Othmer_1997} where the authors model the cells movement with a parabolic-ODE system. Specifically, in \cite{Othmer_1997} a system of
PDEs is considered using a probabilistic framework of reinforced random walks. The authors analyse various combinations of taxis and local dynamics giving examples of aggregation, blow-up and collapse. Later, in \cite{Sleeman_1997}, some analytical and numerical results which support the numerical observations of \cite{Othmer_1997} are presented using a similar model than in \cite{Othmer_1997}. Moreover, in \cite{Anderson_1998,Chaplain_2000} a model of tumor inducing angiogenesis is proposed consisting of a equation with chemotaxis and haptotaxis term, and two nonlinear ODEs. Finally, in \cite{Stevens_2000} a  stochastic system related to bacteria and particles of chemical substances is discussed where the position of each particle is described by a equation of a chemotaxis system.
\\
%

Several works such as \cite{cris,Tao_2010,Tao_2009,Tao_2012} have shown existence results for systems of three differential equations modelling cancer invasion. In \cite{Tao_2010} the global existence and boundedness of solution for a parabolic-parabolic-ODE system with nonlinear density-dependent chemotaxis and haptotaxis and logistic source is deduced. Furthermore, in \cite{Tao_2009}, the authors have proved global existence of solutions for a parabolic-elliptic-ODE system with chemotaxis, haptotaxis and logistic growth. The study of existence of solutions for the chemotaxis and haptotaxis model with nonlinear diffusion is presented in \cite{Tao_2012}. The global existence of solution and its asymptotic behaviour are studied in \cite{cris} for a parabolic-parabolic-ODE system modelling the cells invasion process.
\\
%

Recently, a PDE-ODE model with chemotaxis is studied in \cite{Wang_2020} obtaining asymptotic stability results using a proper transformation and energy estimates. Another PDE-ODE with chemotaxis problem is considered in \cite{Tello_2020}, see also \cite{MTello},  modelling the evolution of biological species and they obtain analytical results concerning the bifurcation of constant steady states and global existence of solutions for a range of initial data. In \cite{FriTello} a parabolic-ODE problem is analysed, and it is shown that, under several conditions, any stationary solution is locally stable.
\\

In this paper, we investigate the following parabolic PDE-ODE system in $\left(0,T_f\right)\times\Omega$ ($\Omega\subseteq\mathbb{R}^3$ is a bounded and regular domain and $T_f>0$ corresponds to the final time)


\begin{equation}\label{probOriginal}
\left\{\begin{array}{ccl}
\dfrac{\partial T}{\partial t} -\underbrace{\nu\; \Delta T}_{\text{Diffusion}}+\underbrace{\kappa\;\nabla\cdot\left(T\;\nabla\Phi\right)}_{\text{Chemotaxis}}& = & f_1\left(T,N,\Phi\right)\\

&&\\
\dfrac{\partial N}{\partial t} & = &  f_2\left(T,\Phi\right)\\

&&\\
\dfrac{\partial \Phi}{\partial t} & = &  f_3\left(T,N,\Phi\right)\\

\end{array}\right.
\end{equation}
\\
endowed with non-flux boundary condition on the boundary $\partial\Omega$
\begin{equation}\label{condifronte}
\left(-\nu\; \nabla T +\kappa\; T\; \nabla \Phi \right) \cdot n =0
\end{equation} 
where $n$ is the outward unit normal vector to $\partial\Omega$ and initial conditions at time $t=0$:
\begin{equation}\label{condinicio}
T\left(0,\cdot\right)=T_0,\;N\left(0,\cdot\right)=N_0,\;\Phi\left(0,\cdot\right)=\Phi_0
\;\;\text{in}\;\;\Omega.
\end{equation}

Here,  
$T(t,x), N(t,x)$ and $\Phi(t,x)$ represent the tumor and necrotic densities and the vasculature concentration at the point $x\in\Omega$ and time $t>0$, respectively. 
\\

%
The nonlinear reactions functions $f_i\;:\mathbb{R}^3\rightarrow\mathbb{R}$ for $i=1,2,3$ have the following form

\begin{equation}\label{funciones}
\left\{\begin{array}{ccl}
f_1\left(T,N,\Phi\right) &:=& \underbrace{\rho\;P\left(\Phi,T\right) \;T\left(1-\dfrac{T+N+\Phi}{K}\right)}_{\text{Tumor growth }}-\underbrace{\alpha\;S(\Phi,T)\;T}_{\text{Hypoxia}}\\

\\
f_2\left(T,N,\Phi\right) &:=&  \alpha\;S(\Phi,T)\;T+\delta\;Q\left(\Phi,T\right)\;\Phi\\

\\
f_3\left(T,N,\Phi\right) &:=&\underbrace{\gamma\;R\left(\Phi,T\right)\Phi\left(1-\dfrac{T+N+\Phi}{K}\right)}_{\text{Vasculature growth }}-\underbrace{\delta\;Q\left(\Phi,T\right)\;\Phi}_{\substack{\text{Vascular destruction}\\ \text{by the tumor}}}
\end{array}\right.
\end{equation}

The parameters in $\left(\ref{probOriginal}\right)$ have the following description \cite{Klank_2018,Alicia_2015, Alicia_2012}:

\begin{table}[H]
	\centering
	\begin{tabular}{c|c|c}
		\textbf{Variable} & \textbf{Description} & \textbf{Value} \\
		\hline
		$\nu$	& Speed diffusion & $\dfrac{\text{cm}^2}{\text{sec}}$     \\
		\hline
		$\kappa$	& Speed chemotaxis & $\dfrac{\text{cm}^2}{\text{sec}\cdot\text{density}}$     \\
		\hline
		$\rho$	& Tumor proliferation rate  &  $\text{day}^{-1}$    \\
		\hline
		$\alpha$ & Hypoxic death rate  & $\text{day}^{-1}$    \\
		\hline
		$\gamma$	& Vasculature proliferation rate  &  $\text{day}^{-1}$    \\
		\hline
		$\delta$	& Vasculature destruction by tumor   &  $\text{day}^{-1}$    \\
		\hline
		$K$	& Carrying capacity & $\text{cell}/\text{cm}^3$ \\
	\end{tabular}
	\caption{\label{Table1} Parameters.}
\end{table}

The functions $P\left(\Phi,T\right)$, $S\left(\Phi,T\right)$, $R\left(\Phi,T\right)$ and $Q\left(\Phi,T\right)$ appearing in $\left(\ref{funciones}\right)$ are adimensional factors with the following biological meaning:
\begin{enumerate}
	
	\item The tumor growth cells need space and a well amount of nutrients to grow. If this amount of nutrients per cell is suitable, the proliferation of tumor cells will occur. Hence, we introduce the tumor proliferation factor $P\left(\Phi,T\right)$ in $f_1$ as a volume fraction of the vasculature.
	
	\item We consider the hypoxia as a decreasing term due to lack of vasculature. Hence, low vasculature produces more tumor destruction. Therefore, the factor $S\left(\Phi,T\right)$ must be a volume fraction of the lack of vasculature.
	
	\item The vasculature growth factor $R\left(\Phi,T\right)$ will depend on the amount of tumor and 
	the vasculature does not grow without tumor. 
	Thus, $R\left(\Phi,T\right)$ will be a volume fraction of tumor.
	
	\item The destruction of vasculature will increase with tumor and there will not be vascular destruction without tumor. In consequence, $Q\left(\Phi,T\right)$ will be a volume fraction of tumor.
\end{enumerate} 

Thus, these factor functions $P\left(\Phi,T\right)$, $S\left(\Phi,T\right)$, $R\left(\Phi,T\right)$ and $Q\left(\Phi,T\right)$ must satisfy the following modelling conditions:

\begin{equation}\label{PQ_menor1}
0\leq P\left(\Phi,T\right),\;S\left(\Phi,T\right),\;Q\left(\Phi,T\right),\;R\left(\Phi,T\right)\leq1\quad\forall\left(T,\Phi\right)\in\mathbb{R}^2,
\end{equation}
and, 
\begin{equation}\label{p}
	 	P\left(\Phi,T\right)=0 \text{ for } \Phi=0 \text{ and } P\left(\Phi,T\right) \text{ increases if } \Phi \text{ increases,}
\end{equation}
	 \begin{equation}\label{s}
	S\left(\Phi,T\right) \text{ increases if } \Phi \text{ decreases,}
	 \end{equation}	 
	 \begin{equation}\label{r}
	 R\left(\Phi,T\right)=0 \text{ for } T=0 \text{ and } R\left(\Phi,T\right) \text{ increases if } T \text{ increases (at least for } T\le K\text{),}
	 \end{equation}
\begin{equation}\label{q}
Q\left(\Phi,T\right)=0 \text{ for } T=0 \text{ and } Q\left(\Phi,T\right) \text{ increases if } T \text{ increases.}
\end{equation}

We assume along the paper the following assumptions on the initial data
\begin{equation}\label{hipotesis0}
0\leq T_0(x),N_0(x),\Phi_0(x)\leq K,\;\; \text{  a.e.}\;x\in\Omega.
\end{equation}

In order to obtain some estimates of the solutions of  $\left(\ref{probOriginal}\right)$-$\left(\ref{condinicio}\right)$ (see $\left(\ref{cotas}\right)$), we define the following truncated system of $\left(\ref{probOriginal}\right)$:
\begin{equation}\label{problin}
\left\{\begin{array}{ccl}
\dfrac{\partial T}{\partial t} -\nu\; \Delta T+\kappa\;\nabla\cdot\left(T_+\;\nabla\Phi\right)& = & f_1\left(T_+,N_+,\Phi_+^{K}\right)\\
&&\\
\dfrac{\partial N}{\partial t} & = &  f_2\left(T_+,\Phi_+\right)\\
&&\\
\dfrac{\partial \Phi}{\partial t} & = & f_3\left(T_+,N_+,\Phi_+^{K}\right)\\
\end{array}\right.
\end{equation}
\\
subject to $\left(\ref{condifronte}\right)$ and $\left(\ref{condinicio}\right)$.
We have denoted $\Phi_+^{K}=\min\left\{K,\max\left\{0,\Phi\right\}\right\}$ and $T_+=\max\left\{0,T\right\}$ and the same for $N_+$ and $\Phi_+$. 
\\

The main contributions of this work are the following:

\begin{enumerate}
	\item \begin{teo}[A priori estimates]\label{teo_estimaciones}

\begin{enumerate}[a)]
	\item Any regular enough solution $\left(T,N,\Phi\right)$ of the truncated problem $\left(\ref{problin}\right)$-$\left(\ref{condinicio}\right)$ satisfies:
	$$0\leq \Phi\leq K,\;\;T\geq 0\;\;\text{and}\;\; N\geq 0,\;\; \text{a.e. in}\;\; \left(0,T_f\right)\times\Omega$$
	and
	$$T,\; N\text{ are bounded in } L^\infty\left(0,T_f;L^{1}\left(\Omega\right)\right).$$
	\item Assuming that there exists a constant $C_1>0$ such that
	 \begin{equation}\label{P_menor_R_Phi}
	C_1\;P\left(\Phi,T\right)\geq R\left(\Phi,T\right)\;\Phi,\quad \forall \;0\le \Phi\le K\text{, and } T\ge 0
	\end{equation} 
	and 
	\begin{equation}\label{rho_mayor_p_gamma}
	\rho\geq \dfrac{\kappa}{\nu}\;\gamma\;C_1,
	\end{equation}
	then
	$$T,\; N\text{ are bounded in } L^\infty\left(0,T_f;L^{\infty}\left(\Omega\right)\right).$$
	\item  Assuming additionally that there exist constants $C_i>0$ for $i=2,3,4$ such that for all $0\le\Phi\le K$ and $T\ge 0$,
	
	\begin{equation}\label{der_RPhi_T_L_infinito}
	\Big|\dfrac{\partial\left(R\left(\Phi,T\right)\;\Phi\right)}{\partial\;\Phi}\Big|,\;\Big|\dfrac{\partial\left(R\left(\Phi,T\right)\;\Phi\right)}{\partial\;T}\Big|\leq C_2,
	\end{equation}
	
	\begin{equation}\label{der_QPhi_L_infinito}
	\Big|\dfrac{\partial\left(Q\left(\Phi,T\right)\;\Phi\right)}{\partial\;\Phi}\Big|,\;\Big|\dfrac{\partial\left(Q\left(\Phi,T\right)\;\Phi\right)}{\partial\;T}
	\Big|\leq C_3
	\end{equation}
	
	and
	\begin{equation}\label{parciales_S_Q_T_L_infitnito}
	\Big|\dfrac{\partial\left(S\left(\Phi,T\right)\;T\right)}{\partial\;\Phi}\Big|,\;\Big|\dfrac{\partial\left(S\left(\Phi,T\right)\;T\right)}{\partial\;T}\Big|\leq C_4,
	\end{equation} 
	then

		 $$\nabla N,\;\nabla \Phi \text{ are bounded in } L^\infty\left(0,T_f;L^{2}\left(\Omega\right)\right),$$
		 and
		 $$\nabla T \text{ is bounded in } L^2\left(0,T_f;L^{2}\left(\Omega\right)\right).$$
	
\end{enumerate}
	\end{teo}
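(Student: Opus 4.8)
The plan is to establish the three items in the order stated, each building on the previous ones.

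For item a) I would argue by sign testing, exploiting that every term of $f_1$ and of the chemotaxis flux in $\left(\ref{problin}\right)$ carries the factor $T_+$, every term of $f_3$ carries $\Phi_+^{K}$, and $f_2(T_+,\Phi_+)\ge 0$ by $\left(\ref{PQ_menor1}\right)$. Testing the $\Phi$-equation with $(\Phi-K)_+$ and with $\Phi_-:=\max\{0,-\Phi\}$, and noticing that at $\Phi=K$ the logistic factor $1-(T_++N_++\Phi_+^{K})/K\le 0$ (so $f_3\le 0$) while at $\Phi\le 0$ one has $\Phi_+^{K}=0$ (so $f_3=0$), yields $0\le\Phi\le K$ from $\left(\ref{hipotesis0}\right)$. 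Testing the $T$-equation with $T_-$ and using that both the reaction and the flux $T_+\nabla\Phi$ vanish on $\{T<0\}$ gives $\tfrac12\tfrac{d}{dt}\|T_-\|_{L^2}^2+\nu\|\nabla T_-\|_{L^2}^2\le 0$, hence $T\ge 0$; and $N\ge 0$ is immediate by time integration of $f_2\ge 0$. For the $L^1$-bounds I integrate in space (the no-flux condition $\left(\ref{condifronte}\right)$ annihilates the flux terms), bound $f_1\le\rho\,T$ using $P\le 1$ and $N,\Phi\ge 0$, and apply Gronwall to get $T\in L^\infty(0,T_f;L^1)$; then $f_2\le\alpha T+\delta K$ and integration in time give the same for $N$.

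The core difficulty is item b), the $L^\infty$-bound on $T$, where the chemotaxis term may in principle drive blow-up; this is where hypotheses $\left(\ref{P_menor_R_Phi}\right)$--$\left(\ref{rho_mayor_p_gamma}\right)$ enter. I would remove the drift by the substitution $v:=T\,e^{-(\kappa/\nu)\Phi}$, under which the diffusion--chemotaxis operator becomes the divergence-form operator $-\nu\,\nabla\cdot\big(e^{(\kappa/\nu)\Phi}\nabla v\big)$, so that
\[
e^{(\kappa/\nu)\Phi}\partial_t v-\nu\,\nabla\cdot\big(e^{(\kappa/\nu)\Phi}\nabla v\big)=f_1-\tfrac{\kappa}{\nu}\,T\,f_3 .
\]
The right-hand side regroups as
\[
T\Big(1-\tfrac{T+N+\Phi}{K}\Big)\big[\rho\,P-\tfrac{\kappa}{\nu}\gamma\,R\,\Phi\big]-\alpha\,S\,T+\tfrac{\kappa}{\nu}\delta\,Q\,\Phi\,T ,
\]
and $\left(\ref{P_menor_R_Phi}\right)$--$\left(\ref{rho_mayor_p_gamma}\right)$ force the bracket to be nonnegative, since $\tfrac{\kappa}{\nu}\gamma R\Phi\le\tfrac{\kappa}{\nu}\gamma C_1 P\le\rho P$. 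Because $0\le\Phi\le K$ keeps $e^{\pm(\kappa/\nu)\Phi}$ bounded above and below, on the region where $T$ is large the logistic factor is negative, the first term is $\le 0$, the hypoxia term is $\le 0$, and the remaining term is controlled by $\tfrac{\kappa}{\nu}\delta K\,T\le C\,v$. A comparison argument against the spatially homogeneous supersolution $\overline v(t)=\|v(0)\|_{L^\infty}e^{Ct}$ (equivalently, an $L^p$/Alikakos iteration) then yields $v$, and hence $T$, bounded in $L^\infty(0,T_f;L^\infty)$; the bound on $N$ follows by integrating $\partial_t N=f_2\le\alpha T+\delta K$. I expect the main obstacle to be making this comparison rigorous for the weighted, non-autonomous operator.

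For item c) I would differentiate the two ODEs in space and couple the resulting energy identities with the parabolic dissipation of the $T$-equation. Hypotheses $\left(\ref{der_RPhi_T_L_infinito}\right)$--$\left(\ref{parciales_S_Q_T_L_infitnito}\right)$, together with the $L^\infty$-bounds already obtained, give $|\nabla f_2|,|\nabla f_3|\le C\big(|\nabla T|+|\nabla N|+|\nabla\Phi|\big)$. Taking the scalar product of $\partial_t\nabla\Phi=\nabla f_3$ with $\nabla\Phi$ and of $\partial_t\nabla N=\nabla f_2$ with $\nabla N$, and using Young's inequality, produces
\[
\frac{d}{dt}\big(\|\nabla\Phi\|_{L^2}^2+\|\nabla N\|_{L^2}^2\big)\le C\big(\|\nabla\Phi\|_{L^2}^2+\|\nabla N\|_{L^2}^2+\|\nabla T\|_{L^2}^2\big).
\]
Testing the $T$-equation with $T$ (the no-flux condition removes the boundary terms and Young absorbs the chemotaxis term $\kappa\int T\,\nabla\Phi\cdot\nabla T$ into $\tfrac{\nu}{2}\|\nabla T\|_{L^2}^2$) gives $\tfrac12\tfrac{d}{dt}\|T\|_{L^2}^2+\tfrac{\nu}{2}\|\nabla T\|_{L^2}^2\le C\|\nabla\Phi\|_{L^2}^2+C$. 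The delicate point is that $\nabla T$ is only available in $L^2(0,T_f;L^2)$, so the stray term $\|\nabla T\|_{L^2}^2$ above must be absorbed by the parabolic dissipation: forming $Y:=\|\nabla\Phi\|_{L^2}^2+\|\nabla N\|_{L^2}^2+\lambda\|T\|_{L^2}^2$ with $\lambda$ large enough that $\lambda\nu$ dominates the constant $C$, Gronwall's lemma yields $Y\in L^\infty(0,T_f)$ --- whence $\nabla\Phi,\nabla N\in L^\infty(0,T_f;L^2)$ --- and, after integrating in time, $\nabla T\in L^2(0,T_f;L^2)$, provided the initial data satisfy $T_0,\nabla N_0,\nabla\Phi_0\in L^2(\Omega)$.
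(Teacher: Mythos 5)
Your proposal is correct and, for parts a) and b), follows essentially the paper's own strategy: the same sign-testing for the pointwise bounds, and the same exponential substitution $T=e^{(\kappa/\nu)\Phi}v$ with the same key structural inequality $\rho P-\tfrac{\kappa}{\nu}\gamma R\,\Phi\ge 0$ coming from $\left(\ref{P_menor_R_Phi}\right)$--$\left(\ref{rho_mayor_p_gamma}\right)$. The differences are in the endgames. In b), the paper keeps $\Phi_t$ inside the weighted time derivative when testing with $u^{p-1}$, cancels the dangerous quadratic term against the logistic part of $f_1$ (producing the factor $\tfrac{p-1}{p}$), and then runs a Gagliardo--Nirenberg/Alikakos iteration; you instead move $\tfrac{\kappa}{\nu}Tf_3$ to the right-hand side first and reduce to the linear inequality $e^{\chi\Phi}\partial_t v-\nu\nabla\cdot(e^{\chi\Phi}\nabla v)\le Cv$, for which a direct comparison with $\overline v(t)=\|v_0\|_{L^\infty}e^{Ct}$ suffices. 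The obstacle you flag (the weighted, non-autonomous time derivative) is genuinely resolvable: testing $w=v-\overline v$ with $w_+$ produces the extra term $-\tfrac{\chi}{2}\int e^{\chi\Phi}f_3\,w_+^2$, and since $f_3\le\gamma R\Phi\le\gamma K$ is bounded \emph{above} (even though it is not bounded below before $T\in L^\infty$ is known), this term is absorbed by Gronwall; so your shortcut works and is arguably cleaner than the iteration, though it yields the same conclusion. In c), your route is genuinely different: the paper never tests the $T$-equation with $T$; it instead reuses the bound $\nabla u\in L^2(0,T_f;L^2(\Omega))$ from part b) together with the pointwise identity $\nabla T=\chi T\nabla\Phi+e^{\chi\Phi}\nabla u$ to control $|\nabla T|\le C(|\nabla\Phi|+|\nabla u|)$ in the gradient ODEs, so the Gronwall forcing is the already-integrable $\|\nabla u\|_{L^2}^2$. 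Your version extracts the dissipation $\tfrac{\nu}{2}\|\nabla T\|_{L^2}^2$ from the natural $L^2$ energy estimate of the $T$-equation (absorbing the chemotaxis cross-term by Young, which uses $T\in L^\infty$ from part b)) and closes the system with the combined functional $Y$; this is self-contained, avoids reintroducing $u$, and delivers $\nabla T\in L^2(0,T_f;L^2(\Omega))$ directly rather than as a corollary. Both arguments require the same regularity of the initial data ($\nabla N_0,\nabla\Phi_0\in L^2(\Omega)$), which the paper leaves implicit in ``regular enough solution.''
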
 
	By Theorem $\ref{teo_estimaciones}$ a), for any $\left(T,N,\Phi\right)$ solution of $\left(\ref{problin}\right)$, we deduce that $T_+=T$, $N_+=N$ and $\Phi_+^{K}=\Phi$ and then, $f_i\left(T_+,N_+,\Phi_+^{K}\right)=f_i\left(T,N,\Phi\right)$ for $i=1,3$ and $f_2\left(T_+,\Phi_+^K\right)=f_2\left(T,\Phi\right)$. Hence, we obtain the following crucial corollary:
\begin{col}\label{solTrunc-solOrigin}
	If $\left(T,N,\Phi\right)$ is a solution of the truncated problem $\left(\ref{problin}\right)$, then $\left(T,N,\Phi\right)$ is also a solution of
	$\left(\ref{probOriginal}\right)$-$\left(\ref{condinicio}\right)$ and $\left(T,N,\Phi\right)$ satisfies the  estimates of Theorem $\ref{teo_estimaciones}$.
\end{col}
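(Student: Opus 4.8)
The plan is to derive this corollary directly from part~a) of Theorem~\ref{teo_estimaciones}, which supplies exactly the pointwise information needed to remove the truncations; everything beyond that is substitution. The whole argument hinges on the a.e.\ bounds $0\le\Phi\le K$, $T\ge 0$ and $N\ge 0$ on $(0,T_f)\times\Omega$ guaranteed by that part.

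First I would record that these bounds turn each truncation operator into the identity, understood as an a.e.\ equality of functions. Indeed, since $T\ge 0$ a.e.\ we have $T_+=\max\{0,T\}=T$, and likewise $N_+=N$ and $\Phi_+=\max\{0,\Phi\}=\Phi$; moreover, because $0\le\Phi\le K$ a.e., $\Phi_+^{K}=\min\{K,\max\{0,\Phi\}\}=\Phi$. Substituting these identities into the reaction terms and using the explicit form \eqref{funciones} gives $f_1(T_+,N_+,\Phi_+^{K})=f_1(T,N,\Phi)$ and $f_3(T_+,N_+,\Phi_+^{K})=f_3(T,N,\Phi)$, while the second equation, whose right-hand side does not actually depend on $N$, yields $f_2(T_+,\Phi_+)=f_2(T,\Phi)$. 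In the same way the chemotaxis flux collapses, $\kappa\,\nabla\!\cdot(T_+\nabla\Phi)=\kappa\,\nabla\!\cdot(T\nabla\Phi)$.

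With these replacements the three equations of the truncated system \eqref{problin} become, term by term, the three equations of \eqref{probOriginal}. The boundary condition \eqref{condifronte} and the initial condition \eqref{condinicio} are imposed verbatim on both problems, so no further check is required there. Hence any regular enough solution $(T,N,\Phi)$ of \eqref{problin} is automatically a solution of \eqref{probOriginal}--\eqref{condinicio}. Finally, the estimates in parts~a)--c) of Theorem~\ref{teo_estimaciones} are stated precisely for solutions of the truncated problem, so they transfer to $(T,N,\Phi)$ unchanged.

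Since every step is an algebraic identification, there is no genuine analytic obstacle here; the one point that deserves care is to ensure the bounds of part~a) hold in the a.e.\ (pointwise) sense rather than merely in an integral sense, as this is exactly what legitimises replacing the truncations by the identity inside the nonlinear terms. Once that is in hand, the corollary follows immediately.
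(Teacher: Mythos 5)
Your proposal is correct and follows essentially the same route as the paper: it invokes the pointwise a.e.\ bounds of Theorem~\ref{teo_estimaciones}~a) to conclude $T_+=T$, $N_+=N$, $\Phi_+=\Phi$ and $\Phi_+^{K}=\Phi$, so that every truncated term (including the chemotaxis flux) coincides with its untruncated counterpart and the truncated system reduces to \eqref{probOriginal}--\eqref{condinicio}. Nothing further is needed.
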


The existence of solutions of problem $\left(\ref{problin}\right)$ is out of the scope of this paper. It is an interesting open problem that could be treated in a forthcoming paper.
	
	\item  In Section $\ref{esquema_espacio}$, we design a Finite Element numerical scheme, computing $\left(\Phi_k^h, T_k^h, N_k^h\right)$ as an approximation of $\left(\Phi(t_k,\cdot), T(t_k,\cdot), N(t_k,\cdot)\right)$ where $t_k$ is a partition of the time interval $\left(0,T_f\right)$ and $h$ is the mesh size. To build the scheme, we will use the change of variable in the PDE equation with chemotaxis, $T=e^{\frac{\kappa}{\nu}\;\Phi}\;u$, similar to the used in \cite{Corrias_2004,Anderson_2015,Mariya_2010}, in order to obtain an equivalent system with diffusion for the new variable $u$.
	\begin{teo}[Discrete version of Theorem $\ref{teo_estimaciones}$ a)]\label{teo_estimaciones_discreto}
		Scheme $\left(\ref{eqT_space}\right)$-$\left(\ref{f3_space}\right)$ has a unique solution satisfying the first pointwise estimates of Theorem $\ref{teo_estimaciones}$ a), these are: 
		\begin{equation}\label{estimaciones_discretas}
		0\leq \Phi_h^k\leq K,\;\;T_h^k\geq 0\;\;\text{and}\;\; N_h^k\geq 0,\;\; \text{in}\;\; \Omega.
		\end{equation}
	\end{teo}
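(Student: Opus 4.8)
The plan is to mirror the continuous argument of Theorem~\ref{teo_estimaciones}~a) at the discrete level, proceeding by induction on the time index $k$. I would assume that the bounds $0\le\Phi_h^{k-1}\le K$, $T_h^{k-1}\ge 0$ and $N_h^{k-1}\ge 0$ hold at the previous step; the base case $k=0$ follows from the hypothesis~\eqref{hipotesis0} on the initial data after interpolation into the discrete finite element space (interpolation of functions valued in $[0,K]$ onto a nodal basis preserves the nodal bounds). Granting these, I would establish at step $k$ the three pointwise estimates first, and then deduce existence and uniqueness of the nodal solution from them.

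The components $N_h^k$ and $\Phi_h^k$ come from discretisations of ODEs (no spatial operator), so I would treat them by checking sign conditions nodewise. For $N$, the discrete right-hand side is $f_2(T_{h,+}^{k},\Phi_{h,+}^{k})=\alpha\,S\,T_{h,+}^{k}+\delta\,Q\,\Phi_{h,+}^{k}\ge 0$, since every factor is nonnegative by~\eqref{PQ_menor1} and by the truncations; hence $N_h^{k}\ge N_h^{k-1}\ge 0$ at each node. For $\Phi$, I would verify that the scheme \eqref{f3_space} inherits the two sign properties exploited in the continuous case: $f_3=0$ when $\Phi=0$ (every term of $f_3$ carries a factor $\Phi$), and $f_3\le 0$ when $\Phi=K$ (there the logistic factor $1-(T+N+\Phi)/K=-(T+N)/K\le 0$ because $T,N\ge 0$, while the destruction term $-\delta Q\Phi$ is already nonpositive). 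With the $\Phi$-equation discretised so that the linear $\Phi$-factor is treated appropriately (semi-implicitly), these two properties confine each node of $\Phi_h^k$ to the invariant interval $[0,K]$.

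The heart of the argument is the $T$-equation, where the change of variable $T=e^{\frac{\kappa}{\nu}\Phi}\,u$ is essential. Writing $T_h^k=e^{\frac{\kappa}{\nu}\Phi_h^k}u_h^k$, positivity of $T_h^k$ is \emph{equivalent} to positivity of $u_h^k$, because the exponential weight is strictly positive and $\Phi_h^k$ is already controlled by the previous step. After the transformation the chemotaxis term becomes a (weighted) diffusion operator for $u$, and the reaction $f_1=T\,[\,\cdot\,]$ still carries a factor $u$, so it vanishes when $u=0$. I would then invoke a discrete maximum principle: using mass-lumping together with an acute (Delaunay-type) triangulation, the stiffness matrix associated with the weighted diffusion is an M-matrix, the fully discrete operator acting on $u_h^k$ is monotone, and a nonnegative right-hand side together with $u_h^{k-1}\ge 0$ forces $u_h^k\ge 0$, i.e. $T_h^k\ge 0$.

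Finally, existence and uniqueness at step $k$ follow once the invariant region is secured. Uniqueness I would obtain by subtracting two hypothetical solutions, testing with the difference, and using the boundedness and Lipschitz character of the $f_i$ on the compact set $\{0\le\Phi\le K,\ T,N\ge 0\}$ together with a discrete Gr\"onwall argument (absorbing the coupling for $\Delta t$ small enough, or directly if the scheme is linear in the unknown increment). Existence I would deduce via a Brouwer fixed-point argument on the finite-dimensional nodal vector, the a priori pointwise estimates above providing the invariant compact convex set on which the fixed-point map is well defined and continuous. The step I expect to be the main obstacle is the discrete maximum principle for the transformed $T$-equation: one must check that mass-lumping plus the geometric mesh condition genuinely yields the M-matrix sign pattern \emph{after} introducing the variable diffusion coefficient $e^{\frac{\kappa}{\nu}\Phi}$, ensuring that neither this weight nor the nodal evaluation of $f_1$ destroys the nonpositivity of the off-diagonal entries.
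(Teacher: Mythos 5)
Your plan for the pointwise bounds coincides with the paper's proof: induction in $k$, nodewise sign tests with $(\cdot)_-$ and $(\cdot-K)_+$ for the ODE components, and, for the tumor equation, the change of variable $T=e^{\frac{\kappa}{\nu}\Phi}u$ followed by a discrete maximum principle on acute meshes with mass lumping. The obstacle you flag at the end is resolved exactly as you hope: since $\nabla\varphi_a\cdot\nabla\varphi_{\widetilde a}\leq 0$ pointwise on acute simplices and $e^{\frac{\kappa}{\nu}\Phi_h^k}>0$, the weighted off-diagonal entries $\left(e^{\frac{\kappa}{\nu}\Phi_h^k}\nabla\varphi_a,\nabla\varphi_{\widetilde a}\right)$ remain nonpositive. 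Where you genuinely diverge is in existence and uniqueness: the scheme of the paper is \emph{linear and fully decoupled} (one solves first for $\Phi_h^{k+1}$ nodewise, then for $u_h^{k+1}$ by Lax--Milgram, then for $N_h^{k+1}$ nodewise), so existence and uniqueness are immediate, with no Brouwer fixed point, no discrete Gr\"onwall, and no smallness condition on $dt$; your heavier machinery is not wrong, but it solves a nonlinear coupled problem the paper deliberately avoids posing. Two smaller discrepancies of the same kind: the discrete reaction terms are not truncated as $f_2(T_{h,+}^k,\Phi_{h,+}^k)$ --- nonnegativity of the arguments is supplied by the induction hypothesis and the solve order --- and your phrase ``a nonnegative right-hand side forces $u_h^k\geq 0$'' is not literally how the $T$-step closes: $\widehat f_1$ has a negative part, and the paper's IMEX splitting puts the nonnegative production terms explicitly (factor $T_h^k\geq 0$, killed when paired with $(u_h^{k+1})_-$) and the absorption terms implicitly (factor $T_h^{k+1}$, giving $-[\cdots]\,e^{\frac{\kappa}{\nu}\Phi_h^{k+1}}\left((u_h^{k+1})_-\right)^2\leq 0$); this sign-splitting is the precise mechanism behind the monotonicity you invoke, and you would need to make it explicit to complete the argument.
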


    The design of a numerical scheme preserving the whole estimates of Theorem $\ref{teo_estimaciones}$, and not only the estimates $\left(\ref{estimaciones_discretas}\right)$, remains as an open problem.
	
	\item A parametric study through numerical simulations is made in order to detect different behaviours for the ring width and the regularity of the surface of the tumor.
\end{enumerate}

The outline of the paper is as follows. In Section $\ref{estimates}$, we prove Theorem $\ref{teo_estimaciones}$. In Section $\ref{esquema_espacio}$ we build a numerical scheme which preserves the a priori estimates of the continuous model given in Theorem $\ref{teo_estimaciones}$ a). Later, in Section $\ref{adimensionalizacion}$, we show a possible example of the dimensionless reaction functions of the system satisfying the hypotheses given in $\left(\ref{p}\right)$-$\left(\ref{q}\right)$ and $\left(\ref{P_menor_R_Phi}\right)$-$\left(\ref{parciales_S_Q_T_L_infitnito}\right)$ and we make an adimensionalization of the model. Section $\ref{numerical_simulations}$ is dedicated to show, by means of some numerical simulations, the different behaviour of the ring width-volume and the regularity surface with respect to the dimensionless parameters. Finally, the more technical part of the proof of Theorem $\ref{teo_estimaciones}$ b), obtained via an Alikakos' argument, is given in an Appendix.

\section{A priori estimates of the solutions of  $\boldsymbol{\left(\ref{problin}\right)$-$\left(\ref{condinicio}\right)}$}\label{estimates}

\subsection{Proof of Theorem $\ref{teo_estimaciones}$ a)}
\begin{lem}
	\label{estimaciones1} 
	Any solution $(T,N,\Phi)$ of the truncated problem $\left(\ref{problin}\right)$ satisfy the following pointwise estimates:
	
	\begin{equation}\label{cotas}
	0\leq \Phi\leq K,\;\;T\geq 0\;\;\text{and}\;\; N\geq 0,\;\; \text{a.e. in}\;\; \left(0,T_f\right)\times\Omega.\end{equation}

\end{lem}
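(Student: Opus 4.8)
The plan is to use a Stampacchia-type truncation argument: test each equation of $(\ref{problin})$ against the positive or negative part of the relevant unknown (or of its excess over $K$), exploiting that the truncated nonlinearities degenerate exactly on the sets where an estimate could be violated. All computations are formal, justified for the ``regular enough'' solutions in the statement.

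First I would establish $0\le\Phi\le K$. Since the third equation is a pointwise-in-$x$ ODE, multiply it by $-\Phi_-$, with $\Phi_-=\max\{0,-\Phi\}$, and integrate over $\Omega$. On the set $\{\Phi<0\}$ one has $\Phi_+^{K}=0$, and since both terms of $f_3$ carry a factor $\Phi_+^{K}$, the right-hand side vanishes there; hence $\frac{d}{dt}\|\Phi_-\|_{L^2}^2=0$, and as $(\ref{hipotesis0})$ gives $\Phi_-(0)=0$ we obtain $\Phi\ge0$. For the upper bound, test against $(\Phi-K)_+$: on $\{\Phi>K\}$ we have $\Phi_+^{K}=K$, so the logistic factor becomes $1-\frac{T_++N_++K}{K}=-\frac{T_++N_+}{K}\le0$ (using $T_+,N_+\ge0$), whence $f_3(T_+,N_+,K)\le0$. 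This yields $\frac{d}{dt}\|(\Phi-K)_+\|_{L^2}^2\le0$, and since $\Phi_0\le K$ we conclude $\Phi\le K$.

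Next, for $T\ge0$ I would test the first (parabolic) equation against $-T_-$. Writing the principal part as $\nabla\cdot\big(-\nu\,\nabla T+\kappa\,T_+\nabla\Phi\big)$ and integrating by parts, the no-flux condition $(\ref{condifronte})$ cancels the boundary term. The crucial observation, and the only genuinely delicate point, is that on $\{T<0\}$ the truncation forces $T_+=0$, so the chemotaxis flux disappears there and only the diffusion contribution $\nu\,\|\nabla T_-\|_{L^2}^2\ge0$ survives; simultaneously, both terms of $f_1$ carry a factor $T_+$, so $f_1(0,N_+,\Phi_+^{K})=0$. This gives $\frac{d}{dt}\tfrac12\|T_-\|_{L^2}^2+\nu\,\|\nabla T_-\|_{L^2}^2=0$, and with $T_0\ge0$ we conclude $T\ge0$. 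Finally, $N\ge0$ is immediate: in $f_2(T_+,\Phi_+)=\alpha\,S(\Phi_+,T_+)\,T_++\delta\,Q(\Phi_+,T_+)\,\Phi_+$ every factor is nonnegative by $(\ref{PQ_menor1})$ and the signs of $T_+,\Phi_+$, so $\partial_t N\ge0$ pointwise and $N(t)\ge N_0\ge0$.

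The main obstacle is precisely the chemotaxis term: without the truncation $T_+$ it would not vanish on $\{T<0\}$ and would obstruct the negative-part estimate for $T$. This is exactly why the truncated system $(\ref{problin})$ was introduced, and all four bounds reduce to checking that each truncated nonlinearity degenerates on the appropriate level set.
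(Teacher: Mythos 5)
Your proof is correct and follows essentially the same route as the paper: testing each equation with the negative part (or the excess $(\Phi-K)_+$) and using that the truncated nonlinearities vanish or have the right sign on the corresponding level sets. You even make explicit the one point the paper leaves implicit, namely that the chemotaxis flux $\kappa\,T_+\nabla\Phi$ contributes nothing when tested against $\min\{T,0\}$ because $T_+$ vanishes where that test function is supported.
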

\begin{proof}
	Let $\left(T,N,\Phi\right)$ be a solution of $\left(\ref{problin}\right)$. Since one can rewrite  $f_1(T_+,N_+,\Phi_+^{K})=T_+\;\widetilde{f_1}(T_+,N_+,\Phi_+^{K})$, multiplying the first equation of $\left(\ref{problin}\right)$ by $T_-=\min\left\{T,0\right\}$ and integrating in $\Omega$, we get
	
	$$\dfrac{1}{2}\dfrac{d}{dt}\int_{\Omega}(T_-)^2\;dx+\nu\int_{\Omega}\mid\nabla T_-\mid^2
	=\int_{\Omega}T_-\;T_+\;\widetilde{f_1}\left(T_+,N_+,\Phi_+^{K}\right)\;dx=0,\quad\text{  a.e. in}\;\left(0,T_f\right).$$
	
	Hence, since $T_-\left(0,x\right)=0$, then $T_{-}\left(t,x\right)=0$ a.e. $\left(t,x\right)\in\left(0,T_f\right)\times\Omega$.
	We repeat the same argument for the other two equations of $\left(\ref{problin}\right)$ using now that 
	$$\Phi_-\;f_3\left(T_+,N_+,\Phi_+^{K}\right)=0\;\;\text{and}\;\;N_-\;f_2\left(T_+,\Phi_+\right)\leq0.$$
	To obtain the upper bound $\Phi\le K$, we multiply the third equation of $\left(\ref{problin}\right)$ by $\left(\Phi-K\right)_+=\max\left\{0,\Phi-K\right\}$ and integrate in $\Omega$,
	
	$$\dfrac{1}{2}\dfrac{d}{dt}\int_{\Omega}\left(\left(\Phi-K\right)_+\right)^2\;dx=\int_{\Omega}f_3\left(T_+,N_+,\Phi_+^{K}\right)\left(\Phi-K\right)_+\;dx,\quad\text{a.e. in}\;\left(0,T_f\right).$$
	Since $f_3(T_+,N_+,\Phi_+^{K})\leq \gamma\; \Phi_+^K(1-\frac{\Phi_+^{K}}{K})$, then $f_3(T_+,N_+,\Phi_+^{K})(\Phi-K)_+\le0$. As $\left(\Phi\left(0,x\right)-K\right)_+=0$, then $\left(\Phi\left(t,x\right)-K\right)_+=0$  a.e. $\left(t,x\right)\in\left(0,T_f\right)\times\Omega$. 

\end{proof}	

%

\begin{lem}\label{estimaciones2} 
	Any solution of $(T,N,\Phi)$ satisfies the estimates:

	 \begin{equation}\label{Linf_L1_T}
	\| T\|_{{L}^\infty\left(0,T_f;L^1\left(\Omega\right)\right)}+\| \sqrt{P\left(\Phi,T\right)}\;T\|_{{L}^2\left(0,T_f;L^2\left(\Omega\right)\right)}\leq C\left(\rho,K,|\Omega|,T_f\right),
	\end{equation}
	\\
	\begin{equation}\label{Linf_L1_N}
	\| N\|_{{L}^\infty\left(0,T_f;L^1\left(\Omega\right)\right)}\leq C\left(\rho,\alpha,\delta,K,|\Omega|,T_f\right).
	\end{equation}
\end{lem}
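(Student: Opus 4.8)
The plan is to test each equation against the constant $1$ (i.e.\ integrate over $\Omega$), exploiting the pointwise bounds of Lemma~\ref{estimaciones1}, which guarantee $T\ge 0$, $N\ge 0$ and $0\le\Phi\le K$ a.e.; these let me drop all truncations and identify $\int_\Omega T=\|T\|_{L^1}$ and $\int_\Omega N=\|N\|_{L^1}$.

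First I would integrate the first equation of \eqref{problin} over $\Omega$. The crucial observation is that the diffusion and chemotaxis terms combine as $\int_\Omega\bigl(-\nu\,\Delta T+\kappa\,\nabla\cdot(T\,\nabla\Phi)\bigr)\,dx=\int_{\partial\Omega}\bigl(-\nu\,\nabla T+\kappa\,T\,\nabla\Phi\bigr)\cdot n\,dS$, which vanishes by the no-flux boundary condition \eqref{condifronte}. Hence
\[
\frac{d}{dt}\int_\Omega T\,dx=\int_\Omega\Big[\rho\,P(\Phi,T)\,T\Big(1-\tfrac{T+N+\Phi}{K}\Big)-\alpha\,S(\Phi,T)\,T\Big]\,dx.
\]
Since $\alpha\,S\,T\ge 0$ by \eqref{PQ_menor1}, that term is discarded. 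To produce the claimed $\sqrt{P}\,T$ quantity I would use $T+N+\Phi\ge T$ (all three are nonnegative), so that $\rho\,P\,T\bigl(1-\tfrac{T+N+\Phi}{K}\bigr)\le \rho\,P\,T-\tfrac{\rho}{K}\,P\,T^2$.

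Next I would absorb the linear term via Young's inequality applied to the splitting $P\,T=(\sqrt P\,T)(\sqrt P)$, namely $\rho\int_\Omega P\,T\le \tfrac{\rho}{2K}\int_\Omega P\,T^2+\tfrac{\rho K}{2}\int_\Omega P$, together with $\int_\Omega P\le|\Omega|$ from \eqref{PQ_menor1}. This yields
\[
\frac{d}{dt}\int_\Omega T\,dx+\frac{\rho}{2K}\int_\Omega P(\Phi,T)\,T^2\,dx\le \frac{\rho K}{2}\,|\Omega|.
\]
Integrating on $(0,t)$ and using $\int_\Omega T_0\le K\,|\Omega|$ from \eqref{hipotesis0} delivers simultaneously the $L^\infty(0,T_f;L^1)$ bound on $T$ and the $L^2(0,T_f;L^2)$ bound on $\sqrt P\,T$ asserted in \eqref{Linf_L1_T}, with constant depending only on $\rho,K,|\Omega|,T_f$.

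Finally, for $N$ I would integrate the second equation over $\Omega$; there is no spatial operator, so this is immediate. Using $0\le S,Q\le 1$ and $0\le\Phi\le K$, the right-hand side is bounded by $\alpha\int_\Omega T+\delta K\,|\Omega|$, and $\int_\Omega T=\|T(t)\|_{L^1}$ is already controlled uniformly in $t$ by the previous step. Integrating in time and using $\int_\Omega N_0\le K\,|\Omega|$ then gives \eqref{Linf_L1_N}. The only delicate point is the exact cancellation of the second-order and chemotaxis contributions through the boundary condition \eqref{condifronte}; the remainder is a direct energy estimate driven by the sign conditions \eqref{PQ_menor1} and the pointwise bounds of Lemma~\ref{estimaciones1}.
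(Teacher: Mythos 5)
Your proposal is correct and follows essentially the same route as the paper: integrate the first equation of \eqref{problin} over $\Omega$, cancel the flux terms via \eqref{condifronte}, drop the nonnegative hypoxia and $P\,T(N+\Phi)/K$ contributions, apply Young's inequality to the splitting $P\,T=\sqrt{P}\,(\sqrt{P}\,T)$, and then integrate the $N$-equation using \eqref{PQ_menor1}, $\Phi\le K$ and the bound just obtained for $T$. Your explicit inclusion of the initial-data terms $\int_\Omega T_0,\int_\Omega N_0\le K|\Omega|$ is a small point the paper glosses over, but it is not a different argument.
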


\begin{proof}	
	Let $\left(T,N,\Phi\right)$ be a solution of $\left(\ref{problin}\right)$. Integrating in $\Omega$ the first equation of $\left(\ref{problin}\right)$ and using that $P\left(\Phi,T\right),\;S\left(\Phi,T\right)\geq0$, we obtain that
	
	$$\displaystyle\dfrac{d}{dt}\int_{\Omega}T\;dx=\int_{\Omega}\rho\;P\left(\Phi,T\right)\;T\;dx-\int_{\Omega}\rho\;P\left(\Phi,T\right)\dfrac{T^2}{K}\;dx-\int_{\Omega}\rho\;\underbrace{P\left(\Phi,T\right)\;T\;\dfrac{N+\Phi}{K}}_{\geq0}\;dx-$$
	$$-\int_{\Omega}\alpha\;\underbrace{S\left(\Phi,T\right)T}_{\geq0}\;dx\leq\int_{\Omega}\rho\;P\left(\Phi,T\right)\;T\;dx-\dfrac{1}{K}\int_{\Omega}\rho\;P\left(\Phi,T\right)T^2\;dx.$$
	
	Thus,
	
	$$\displaystyle\dfrac{d}{dt}\int_{\Omega}T\;dx+\dfrac{1}{K}\int_{\Omega}\rho\;P\left(\Phi,T\right)T^2\;dx\leq\int_{\Omega}\rho\;P\left(\Phi,T\right)\;T\;dx.$$
	
	Rewriting $P\left(\Phi,T\right)\;T=\sqrt{P\left(\Phi,T\right)}\sqrt{P\left(\Phi,T\right)}\;T$ and applying Young's inequality for the right side, we get,
	
	$$\displaystyle\dfrac{d}{dt}\int_{\Omega}T\;dx+\dfrac{1}{K}\int_{\Omega}\rho\;P\left(\Phi,T\right)T^2\;dx\leq\rho\left(\dfrac{1}{2\;K}\int_{\Omega}P\left(\Phi,T\right)T^2\;dx+\dfrac{K}{2}\int_{\Omega}P\left(\Phi,T\right)\right).$$
	
	Hence, using that $P\left(\Phi,T\right)\leq1$, we conclude that
	
	$$\displaystyle\dfrac{d}{dt}\int_{\Omega}T\;dx+\dfrac{\rho}{2\;K}\int_{\Omega}P\left(\Phi,T\right)T^2\;dx\leq\dfrac{\rho\;K}{2}|\Omega|.$$
	
	Integrating in $\left(0,t\right)$ for $0<t\leq T_f$, we obtain that
	
	$$\|T\left(t,\cdot\right)\|_{L^1\left(\Omega\right)}+\dfrac{\rho}{2\;K}\int_{0}^{t}\int_{\Omega}P\left(\Phi,T\right)T^2\;dx\;dt\leq T_f\;\dfrac{\rho\;K}{2}|\Omega|,\quad \forall t\in\left(0,T_f\right)$$
	whence we deduce $\left(\ref{Linf_L1_T}\right)$.
	\\
	
	To prove $\left(\ref{Linf_L1_N}\right)$, we integrate the second equation of $\left(\ref{problin}\right)$ in $\Omega\times\left(0,t\right)$, with $0<t\leq T_f$,
	
	$$\|N\left(t,\cdot\right)\|_{L^1\left(\Omega\right)}\leq\alpha\;\int_{0}^{t}\int_{\Omega}T\;dx\;dt+\delta\;\int_{0}^{t}\int_{\Omega}\Phi\;dx\;dt$$
	where we have used $\left(\ref{PQ_menor1}\right)$. Thus, using that $\Phi\leq K$ and the bound obtained for $T$ in $\left(\ref{Linf_L1_T}\right)$, we get $\left(\ref{Linf_L1_N}\right)$.

\end{proof}

\subsection{Proof of Theorem $\ref{teo_estimaciones}$ b)}
In order to obtain the $L^\infty$ estimate for $T$, firstly we make a change of variable such that we rewrite the diffusion term and chemotaxis term as an unique diffusion term depending on the new variable. In fact, we consider:

\begin{equation}\label{cambio_variable}
w=\log\left(T\right)-\chi\;\Phi\Leftrightarrow T=e^w\;e^{\chi\;\Phi}=e^{\chi\;\Phi}\;u
\end{equation}
with $u=e^{w}$ and $\chi=\dfrac{\kappa}{\nu}$. 
\\

Thus, the first equation of $\left(\ref{probOriginal}\right)$ changes to

\begin{equation}\label{eqTcambio}
\left(e^{\chi\;\Phi}\;u\right)_t-\nu\nabla\cdot\left(e^{\chi\;\Phi}\;\nabla\;u\right)=f_1\left(e^{\chi\;\Phi}\;u,N,\Phi\right)
\end{equation}
and the boundary condition $\left(\ref{condifronte}\right)$ to
\begin{equation}\label{condifronte2}
\nabla u \; \cdot n =0.
\end{equation} 
	\begin{lem}[Proof of Theorem $\ref{teo_estimaciones}$ b)]\label{estimaciones3} 
	Assume $\left(\ref{P_menor_R_Phi}\right)$ and $\left(\ref{rho_mayor_p_gamma}\right)$.
	Then, given any solution $\left(T,N,\Phi\right)$ of $\left(\ref{problin}\right)$, 
	it holds that $u$ is bounded in $L^\infty\left(0,T_f;L^{\infty}\left(\Omega\right)\right)$ and $\nabla u$ is bounded in $L^2\left(0,T_f;L^{2}\left(\Omega\right)\right)$. Moreover, $T$ and $N$ are bounded in $L^\infty(0,T_f;L^\infty(\Omega))$.
	
\end{lem}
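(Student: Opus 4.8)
The plan is to run an Alikakos-type $L^p$-energy argument on the transformed equation $\left(\ref{eqTcambio}\right)$, exploiting that after the change of variable $\left(\ref{cambio_variable}\right)$ the chemotaxis and diffusion terms have collapsed into the single nondegenerate diffusion $-\nu\nabla\cdot(e^{\chi\Phi}\nabla u)$. Write $g:=e^{\chi\Phi}$, so that $1\le g\le e^{\chi K}$ by Lemma \ref{estimaciones1}, $g_t=\chi\Phi_t\,g$, and $u\ge 0$ since $T=g\,u\ge 0$. For $p\ge 2$ I would test $\left(\ref{eqTcambio}\right)$ with $u^{p-1}$ and integrate by parts using the no-flux condition $\left(\ref{condifronte2}\right)$: the diffusion term produces the good quantity $\frac{4\nu(p-1)}{p^2}\int_\Omega g\,|\nabla u^{p/2}|^2$, while the time term yields $\frac{1}{p}\frac{d}{dt}\int_\Omega g\,u^p+\frac{\chi(p-1)}{p}\int_\Omega \Phi_t\,g\,u^p$, the last summand arising precisely from the weight $g$.

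The crux is an algebraic cancellation. Substituting $\Phi_t=f_3(T,N,\Phi)$ and $f_1(T,N,\Phi)=\rho P\,T(1-\tfrac{T+N+\Phi}{K})-\alpha S\,T$ with $T=g\,u$, the two logistic contributions combine so that the coefficient multiplying $g\,u^p\,(1-\tfrac{gu+N+\Phi}{K})$ becomes
$$B_p:=\rho\,P-\frac{\chi(p-1)}{p}\,\gamma\,R\,\Phi .$$
Here I would invoke hypotheses $\left(\ref{P_menor_R_Phi}\right)$ and $\left(\ref{rho_mayor_p_gamma}\right)$: since $R\Phi\le C_1P$ and $\tfrac{p-1}{p}<1$, one gets $B_p\ge P(\rho-\chi\gamma C_1)\ge 0$. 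This nonnegativity is the whole point, since it guarantees that in the super-carrying-capacity region $\{gu+N+\Phi>K\}$ the logistic term is genuinely absorbing rather than amplifying, which is exactly the danger introduced by the chemotactic drift. I expect this sign analysis, and the verification that the weight-derivative term does not spoil it, to be the main obstacle.

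With $B_p\ge 0$ in hand the rest is routine. On $\{gu+N+\Phi\le K\}$ one has $u\le K$, so the logistic term contributes at most $\rho\,e^{\chi K}K^p|\Omega|=:C(p)$; on the complement it is nonpositive and may be discarded; the surviving reaction contribution $\frac{\chi(p-1)}{p}\delta\int_\Omega Q\,\Phi\,g\,u^p$ is bounded by the $p$-independent linear term $\chi\delta K\int_\Omega g\,u^p$, and $-\alpha\int_\Omega S\,g\,u^p\le 0$. Dropping the gradient term I would reach $\frac{d}{dt}\int_\Omega g\,u^p\le p\,\chi\delta K\int_\Omega g\,u^p+p\,C(p)$, whose Gronwall integration, using $\int_\Omega g(0)u_0^p\le e^{\chi K}K^p|\Omega|$ from $\left(\ref{hipotesis0}\right)$, gives $\|u(t)\|_{L^p}\le e^{\chi\delta K T_f}\,K\,(\text{const})^{1/p}$ with the constant independent of $p$. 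Letting $p\to\infty$ then yields $\|u\|_{L^\infty(0,T_f;L^\infty(\Omega))}\le e^{\chi\delta K T_f}K$; the decisive feature is that the Gronwall rate scales linearly in $p$ while $C(p)$ grows only like $K^p$, so the $p$-th root survives the limit. (This is the step that, for full rigour, one organizes as the iterative Alikakos bootstrap deferred to the Appendix.)

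Finally I would transfer the bound back: $T=e^{\chi\Phi}u\le e^{\chi K}\|u\|_{L^\infty}$ is bounded in $L^\infty(0,T_f;L^\infty(\Omega))$, and integrating the ODE $N_t=\alpha S\,T+\delta Q\,\Phi$ in time, with $S,Q\le 1$, $\Phi\le K$ and the bound just obtained for $T$, shows that $N$ is bounded as well. For the gradient statement I would take $p=2$ in the energy inequality, retain the diffusion term $\nu\int_\Omega g\,|\nabla u|^2$, integrate in time, and control the right-hand side by the $L^\infty$ (hence $L^2$) bounds already established; since $g\ge 1$ this yields $\nabla u\in L^2(0,T_f;L^2(\Omega))$.
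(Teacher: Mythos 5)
Your proposal is correct, and its first half coincides with the paper's proof: the same change of variable $T=e^{\chi\Phi}u$, the same test function $u^{p-1}$, the same treatment of the weighted time derivative producing the extra term $\frac{p-1}{p}\chi\Phi_t e^{\chi\Phi}u^p$, and the same key cancellation giving the coefficient $\rho P-\frac{p-1}{p}\chi\gamma R\Phi\ge 0$ under $\left(\ref{P_menor_R_Phi}\right)$ and $\left(\ref{rho_mayor_p_gamma}\right)$. Where you genuinely diverge is in how the $L^\infty$ bound is closed. The paper keeps the nonnegative logistic term as an absorbing term, bounds the right-hand side by $C\,e^{\chi\Phi}u^p$ with $C$ independent of $p$, and then runs the heavier machinery: the Gagliardo--Nirenberg inequality $\left(\ref{Glagiardo}\right)$ applied twice to trade $\|w\|_{L^2}^2$ for $\|w\|_{L^1}^2=\|u\|_{L^{p/2}}^p$, leading to the recursive estimate $\left(\ref{Linf_prueba7}\right)$ and the Alikakos iteration of the Appendix. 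You instead split $\Omega$ by the carrying-capacity level set $\{e^{\chi\Phi}u+N+\Phi\le K\}$: there $u\le K$ (since $e^{\chi\Phi}\ge 1$, $N,\Phi\ge 0$) so the growth term contributes at most $\rho e^{\chi K}K^p|\Omega|$, while on the complement it is nonpositive and can be discarded; a single Gronwall step then gives $\|u(t)\|_{L^p}^p\le e^{p\chi\delta K T_f}K^p\cdot(\text{const}(1+p))$, and the $p$-th root passes to the limit because the inhomogeneity grows only like $K^p$. This is a valid, more elementary and more quantitative route — it yields the explicit bound $\|u\|_{L^\infty}\le e^{\chi\delta K T_f}K$ and renders the Gagliardo--Nirenberg/Alikakos apparatus unnecessary (your parenthetical deferral to the bootstrap is not actually needed for your own argument). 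The paper's approach buys nothing extra here beyond robustness in situations where the Gronwall constant would depend on $p$; your approach buys simplicity and an explicit constant. The $p=2$ gradient estimate and the transfer back to $T$ and $N$ are identical to the paper's.
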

\begin{proof}
	To obtain the $L^\infty$ estimates for $T$ and $N$, taking into account the $L^\infty$ estimates for $\Phi$, it suffices that $u$ be $L^\infty$. The proof of $u$ is based in $L^p$ estimates with an Alikakos' argument. Let $\left(T,N,\Phi\right)$ be a solution of $\left(\ref{problin}\right)$. We multiply $\left(\ref{eqTcambio}\right)$ by $u^{p-1}$ (for any $p\geq2$), and analyse term by term:
	
	\begin{itemize}
		\item Time derivative term:
		\begin{equation}\label{derivada_t_cambio}
		\left(e^{\chi\;\Phi}\;u\right)_t\;u^{p-1}=\chi\;\Phi_t\;e^{\chi\;\Phi}\;u^p+\dfrac{1}{p}e^{\chi\;\Phi}\left(u^p\right)_t
		\end{equation}
		and the second term of the right side of $\left(\ref{derivada_t_cambio}\right)$ can be expressed as
		\begin{equation}\label{derivada_t_cambio2}
		\dfrac{1}{p}e^{\chi\;\Phi}\left(u^p\right)_t=\dfrac{1}{p}\left(e^{\chi\;\Phi}\;u^p\right)_t-\dfrac{\chi}{p}\;e^{\chi\;\Phi}\;u^p\;\Phi_t.
		\end{equation}
		Hence, from $\left(\ref{derivada_t_cambio}\right)$ and $\left(\ref{derivada_t_cambio2}\right)$,
		\begin{equation}\label{derivada_t_cambio3}
		\left(e^{\chi\;\Phi}\;u\right)_t\;u^{p-1}=\dfrac{1}{p}\left(e^{\chi\;\Phi}\;u^p\right)_t+\dfrac{p-1}{p}\;\chi\;\Phi_t\;e^{\chi\;\Phi}\;u^p.
		\end{equation}
		\item Nonlinear diffusion term:
		\begin{equation}\label{difusion_cambio}
		\begin{array}{c}
		-\nu\nabla\cdot\left(e^{\chi\;\Phi}\;\nabla\;u\right)\;u^{p-1}=-\nu\;\nabla\cdot\left(e^{\chi\;\Phi}\left(\nabla\;u\right)u^{p-1}\right)+\nu\;e^{\chi\;\Phi}\left(p-1\right)\;u^{p-2}\mid\nabla\;u\mid^2\\
		\\
		=-\nu\;\nabla\cdot\left(e^{\chi\;\Phi}\left(\nabla\;u\right)u^{p-1}\right)+\nu\;e^{\chi\;\Phi}\left(p-1\right)\;\dfrac{4}{p^2} \mid\nabla (u^{p/2})\mid^2.
		\end{array}
		\end{equation}
		\item Reaction term:
		\begin{equation}\label{reacion_cambio}
		f_1\left(e^{\chi\;\Phi}\;u,N,\Phi\right)\;u^{p-1}=\rho\;P\left(\Phi,T\right)\;e^{\chi\;\Phi}\;u^p\left(1-\dfrac{e^{\chi\;\Phi}\;u+N+\Phi}{K}\right)-\alpha\;S\left(\Phi,T\right)e^{\chi\;\Phi}\;u^p.
		\end{equation}
	\end{itemize}

	Rewriting in $\left(\ref{derivada_t_cambio3}\right)$ the function $\Phi_t$ as $f_3\left(e^{\chi\;\Phi}\;u,N,\Phi\right)$ and adding $\left(\ref{derivada_t_cambio3}\right)$, $\left(\ref{difusion_cambio}\right)$ and $\left(\ref{reacion_cambio}\right)$, we get:

	\begin{equation}\label{L2H1_prueba1}
	\begin{array}{c}
	\dfrac{1}{p}\left(e^{\chi\;\Phi}\;u^p\right)_t-\nu\;\nabla\cdot\left(e^{\chi\;\Phi}\left(\nabla\;u\right)u^{p-1}\right)+\nu\;e^{\chi\;\Phi}\left(p-1\right)\;\dfrac{4}{p^2} \mid\nabla (u^{p/2})\mid^2+\alpha\;S\left(\Phi,T\right)e^{\chi\;\Phi}\;u^p\\
	\\
	+\left(\rho\;P\left(\Phi,T\right)-\left(\dfrac{p-1}{p}\right)\;\chi\;\gamma\;R\left(\Phi,T\right)\;\Phi\right)\;e^{\chi\;\Phi}\;u^p\left(\dfrac{e^{\chi\;\Phi}\;u+N+\Phi}{K}\right)\\
	\\
	=\left(\rho\;P\left(\Phi,T\right)-\left(\dfrac{p-1}{p}\right)\;\chi\;\gamma\;R\left(\Phi,T\right)\;\Phi\right)\;e^{\chi\;\Phi}\;u^p
	+\dfrac{\chi}{p}\;e^{\chi\;\Phi}\;u^p\;\delta\;Q\left(\Phi,T\right)\;\Phi.
	\end{array}
	\end{equation}
	
	Due to hypothesis $\left(\ref{rho_mayor_p_gamma}\right)$ and $\left(\ref{P_menor_R_Phi}\right)$, it is easy to see in $\left(\ref{L2H1_prueba1}\right)$ that, $$\rho\;P\left(\Phi,T\right)-\left(\dfrac{p-1}{p}\right)\;\chi\;\gamma\;R\left(\Phi,T\right)\;\Phi\geq0.$$ 
	
	Using now that $0\leq\Phi\leq K$, $\left(\ref{PQ_menor1}\right)$ and $\left(\ref{rho_mayor_p_gamma}\right)$ we obtain that
	
	\begin{equation}\label{L2Lp_prueba1}
	\begin{array}{c}
	\dfrac{1}{p}\left(e^{\chi\;\Phi}\;u^p\right)_t-\nu\;\nabla\cdot\left(e^{\chi\;\Phi}\left(\nabla\;u\right)u^{p-1}\right)+\nu\;e^{\chi\;\Phi}\left(p-1\right)\;\dfrac{4}{p^2} \mid\nabla (u^{p/2})\mid^2	+\alpha\;S\left(\Phi,T\right)e^{\chi\;\Phi}\;u^p\\
	\\
	\leq C\;e^{\chi\; \Phi} u^p
	\end{array}
	\end{equation}
	with $C>0$. Integrating $\left(\ref{L2Lp_prueba1}\right)$ in $\Omega$, it holds that
	
	\begin{equation}\label{L2Lp_prueba2}
	\begin{array}{c}
	\displaystyle\dfrac{1}{p}\;\dfrac{d}{dt}\int_{\Omega}e^{\chi\;\Phi}\;u^p\;dx+\nu\;\left(p-1\right)\;\dfrac{4}{p^2}\int_{\Omega}\;e^{\chi\;\Phi}\mid\nabla (u^{p/2})\mid^2\;dx+
	\alpha\int_{\Omega}S\left(\Phi,T\right)e^{\chi\;\Phi}\;u^p\;dx\\
	\\
	\leq\displaystyle C\int_{\Omega}\;e^{\chi\; \Phi} u^p\;dx
	\end{array}
	\end{equation}
	with $C>0$ independent of $p$ (along the proof, we will denote by $C$ different constants independent of $p$). 
	\\
%

Using the auxiliary variable $w=u^{p/2}$, we can rewrite $\left(\ref{L2Lp_prueba2}\right)$ as follows

 \begin{equation}\label{Linf_prueba1}
 \begin{array}{c}
 \displaystyle\dfrac{1}{p}\;\dfrac{d}{dt}\|e^{\frac{\chi\;\Phi}{2}}\;w\|_{L^2\left(\Omega\right)}^2+4\;\nu\;\dfrac{\left(p-1\right)}{p^2}\;\|e^{\frac{\chi\;\Phi}{2}}\;\nabla w\|_{L^2\left(\Omega\right)}^2\leq C\; \|e^{\frac{\chi\;\Phi}{2}}\;w\|_{L^2\left(\Omega\right)}^2.
 \end{array}
 \end{equation}

 Thus, applying Gronwall's lemma, we deduce for $p=2$ that
 
 $$\nabla u\;\;\text{is bounded in}\;\; L^2\left(0,T_f;L^2\left(\Omega\right)\right).$$
 
 Now, using the following equivalent norms with constants independent of $p$ 
 
  \begin{equation}\label{estimacion_exponencial}
 \displaystyle
  \| z \|_{L^2\left(\Omega\right)}^2 \le \| e^{\frac{\chi\;\Phi}{2}}\; z \|_{L^2\left(\Omega\right)}^2 \le e^{\chi\;K} \|  z \|_{L^2\left(\Omega\right)}^2, 
  \end{equation}
multiplying $\left(\ref{Linf_prueba1}\right)$ by $p$ and using that $\dfrac{p-1}{p}\geq \dfrac{1}{2}$ for any $p\geq2$, we obtain that
 
  \begin{equation}\label{Linf_prueba2}
 \begin{array}{c}
 \displaystyle\dfrac{d}{dt}\|e^{\frac{\chi\;\Phi}{2}}\;w\|_{L^2\left(\Omega\right)}^2+2\;\nu\;\|\nabla w\|_{L^2\left(\Omega\right)}^2\leq  C\;p\;\| w\|_{L^2\left(\Omega\right)}^2.
 \end{array}
 \end{equation}
 
 We are going to apply the following Gagliardo-Nirenberg interpolation inequality (\cite[Theorem 10.1]{Friedman_1969}) 
 
 \begin{equation}\label{Glagiardo}
 \| w\|_{L^2\left(\Omega\right)}^2\leq\varepsilon\|\nabla\;w\|_{L^2\left(\Omega\right)}^2+C\left(\dfrac{1}{\varepsilon}\right)^{n/2}\|w\|_{L^1\left(\Omega\right)}^2
 \end{equation}
 with $\varepsilon>0$ and $n$ the dimension of $\Omega$ (in this case $n=3$). Applying $\left(\ref{Glagiardo}\right)$ for
 $\varepsilon=\dfrac{\nu}{C\;p}$ in the right hand side of $\left(\ref{Linf_prueba2}\right)$, we deduce that
 
%

\begin{equation}\label{Linf_prueba4}
\begin{array}{c}
\displaystyle\dfrac{d}{dt}\|e^{\frac{\chi\;\Phi}{2}}\;w\|_{L^2\left(\Omega\right)}^2+\nu\;\|\nabla w\|_{L^2\left(\Omega\right)}^2\leq  C\;p^2\;\| w\|_{L^1\left(\Omega\right)}^2.
\end{array}
\end{equation}

Using $\left(\ref{Glagiardo}\right)$ in $\left(\ref{Linf_prueba4}\right)$ but now for $\varepsilon = \nu$, 
 it holds that

\begin{equation}\label{Linf_prueba5}
\begin{array}{c}
\displaystyle\dfrac{d}{dt}\|e^{\frac{\chi\;\Phi}{2}}\;w\|_{L^2\left(\Omega\right)}^2+\;\| w\|_{L^2\left(\Omega\right)}^2\leq  C\left(p^2+1\right)\| w\|_{L^1\left(\Omega\right)}^2.
\end{array}
\end{equation}

Finally, due to $\left(\ref{estimacion_exponencial}\right)$, we can deduce that 

\begin{equation}\label{Linf_prueba6}
\begin{array}{c}
\displaystyle\dfrac{d}{dt}\|e^{\frac{\chi\;\Phi}{2}}\;w\|_{L^2\left(\Omega\right)}^2+C_1\;\|e^{\frac{\chi\;\Phi}{2}}\; w\|_{L^2\left(\Omega\right)}^2\leq  C\left(p^2+1\right)\| w\|_{L^1\left(\Omega\right)}^2
\end{array}
\end{equation}
where $C_1 = e^{-\chi\;K}$.
\\

Hence, we obtain that
\begin{equation}\label{Linf_prueba7}
\begin{array}{c}
\displaystyle\max_{t\in (0,T_f)}\|u\|_{L^p\left(\Omega\right)}^p\leq\|e^{\frac{\chi\;\Phi}{2}}\;w\left(t\right)\|_{L^2\left(\Omega\right)}^2\leq \displaystyle e^{-C_1\;t}\;C\|u_0\|_{L^\infty\left(\Omega\right)}^p\\
\\
\displaystyle+C\left(p^2+1\right)\;e^{-C_1\;t}\int_{0}^{t}e^{C_1\;s}\left(\int_{\Omega}u^{p/2}\;dx\right)^2\;ds
\displaystyle\leq C\;\|u_0\|_{L^\infty\left(\Omega\right)}^p+C\left(p^2+1\right)\;\max_{t\in\left(0,T_f\right)}\|u\|_{L^{p/2}\left(\Omega\right)}^p \\
\\
\displaystyle\leq C\; \max\Big\{\left(p^2+1\right)\;\max_{t\in\left(0,T_f\right)}\|u\|_{L^{p/2}\left(\Omega\right)}^p,\;\|u_0\|_{L^\infty\left(\Omega\right)}^p\Big\}.
\end{array}
\end{equation}

Following a similar argument to used by Alikakos in \cite{Alikakos_1979} (see Appendix), from $\left(\ref{Linf_prueba7}\right)$ we can obtain that

$$u\text{ is bounded in } L^\infty\left(0,T_f;L^\infty\left(\Omega\right)\right).$$

As consequence, $T$ is bounded in $L^\infty\left(0,T_f;L^{\infty}\left(\Omega\right)\right)$. 
\\

Since $N_t=f_2\left(T,\Phi\right)$ and $T$ and $\Phi$ are bounded in $L^\infty\left(0,T_f;L^\infty\left(\Omega\right)\right)$ we obtain that $N$ is bounded in $L^\infty\left(0,T_f;L^{\infty}\left(\Omega\right)\right)$.
\end{proof}

\subsection{Proof of Theorem $\ref{teo_estimaciones}$ c)}
	
	Let $\left(T,N,\Phi\right)$ be a solution of $\left(\ref{problin}\right)$. Taking gradient in the second and third equation of $\left(\ref{problin}\right)$, 
	
	\begin{equation}\label{ec_grad_phi}
	\begin{array}{ll}
\left(\nabla\;\Phi\right)_t=&\gamma\left[\left(\dfrac{\partial\left(R\left(\Phi,T\right)\;\Phi\right)}{\partial\;\Phi}\;\nabla\;\Phi+\dfrac{\partial\left(R\left(\Phi,T\right)\;\Phi\right)}{\partial\;T}\;\nabla\;T\right)\left(1-\dfrac{T+N+\Phi}{K}\right)-\right.\\
\\
&\left.-\dfrac{R\left(\Phi,T\right)\;\Phi}{K}\;\left(\nabla\;T+\nabla\;N+\nabla\;\Phi\right)\right]-\delta\left(\dfrac{\partial\left(Q\left(\Phi,T\right)\;\Phi\right)}{\partial\;\Phi}\;\nabla\;\Phi+\right.\\
\\
&\left.+\dfrac{\partial\left(Q\left(\Phi,T\right)\;\Phi\right)}{\partial\;T}\;\nabla\;T\right),
	\end{array}
	\end{equation}



	\begin{equation}\label{ec_grad_n}
\begin{array}{ll}
\left(\nabla\;N\right)_t=&\alpha\left(\dfrac{\partial\left(S\left(\Phi,T\right)\;T\right)}{\partial\;\Phi}\;\nabla\;\Phi+\dfrac{\partial\left(S\left(\Phi,T\right)\;T\right)}{\partial\;T}\;\nabla\;T\right)+\\
\\
&+\delta\left(\dfrac{\partial\left(Q\left(\Phi,T\right)\;\Phi\right)}{\partial\;\Phi}\;\nabla\;\Phi+\dfrac{\partial\left(Q\left(\Phi,T\right)\;\Phi\right)}{\partial\;T}\;\nabla\;T\right).
\end{array}
\end{equation}
	
	Using the change of variable $T=e^{\chi\;\Phi}\;u$ as in Lemma $\ref{estimaciones3}$, we deduce that 
	\begin{equation}\label{nablaT}
	\nabla\;T=\chi\;e^{\chi\;\Phi}\;u\;\nabla\Phi+e^{\chi\;\Phi}\;\nabla u=\chi\;T\;\nabla\;\Phi+e^{\chi\;\Phi}\;\nabla u
		\end{equation}
		and we know from Lemma $\ref{estimaciones3}$ that $\nabla\;u$ is bounded in $L^2\left(0,T_f;L^2\left(\Omega\right)\right)$. 
%
	Taking into account that $T$ and $\Phi$ are bounded in $L^\infty\left(0,T_f;L^{\infty}\left(\Omega\right)\right)$, it holds that $$|\nabla\;T|\leq C\left(|\nabla\;\Phi|+|\nabla\;u|\right).$$ 
	
	Thus, rewriting $\left(\ref{ec_grad_phi}\right)$ and $\left(\ref{ec_grad_n}\right)$ in terms of $\nabla\;u$, 
	multiplying $\left(\ref{ec_grad_phi}\right)$ and $\left(\ref{ec_grad_n}\right)$ by $\nabla\;\Phi$ 
	 and $\nabla\;N$ 
	respectively and integrating in $\Omega$, 
we deduce 

\begin{equation}\label{ec_grad_Phi_p}
\begin{array}{ll}
\dfrac{1}{2}\;\dfrac{d}{dt}\|\nabla\;\Phi\|_{L^2\left(\Omega\right)}^2\leq&\displaystyle C_1\;\|\nabla\;\Phi\|_{L^2\left(\Omega\right)}^{2}+C_2\;\int_{\Omega}|\nabla\;u|\;|\nabla\;\Phi|\;dx
\displaystyle+C_3\;\int_{\Omega}|\nabla\;N|\;|\nabla\;\Phi|\;dx,
\end{array}
\end{equation}
and 
\begin{equation}\label{ec_grad_N_p}
\begin{array}{ll}
\dfrac{1}{2}\;\dfrac{d}{dt}\|\nabla\;N\|_{L^2\left(\Omega\right)}^2\leq&\displaystyle C_4\;\int_{\Omega}|\nabla\;\Phi|\;|\nabla\;N|\;dx
\displaystyle+C_5\;\int_{\Omega}|\nabla\;u|\;|\nabla\;N|\;dx,
\end{array}
\end{equation}
with $C_i>0$ for $i=1,\ldots,5$. In $\left(\ref{ec_grad_Phi_p}\right)$ and $\left(\ref{ec_grad_N_p}\right)$ we have applied
the inequality $$\int_{\Omega}v\;|\nabla\;u|\;|\nabla\;\Phi|\;dx\leq\|v\|_{L^\infty\left(\Omega\right)}\;\int_{\Omega}|\nabla\;u|\;|\nabla\;\Phi|\;dx$$ with $v=T,\;N,\;\Phi$ since $T$, $N$ and $\Phi$ are bounded in $L^\infty\left(0,T_f;\;L^{\infty}\left(\Omega\right)\right)$.
\\
 
 Using now Cauchy-Schwarz and Young's inequalities 
 in $\left(\ref{ec_grad_Phi_p}\right)$ and $\left(\ref{ec_grad_N_p}\right)$ 
%
%
%
and adding them, 
it holds that
\begin{equation}\label{suma_grad_n_phi}
\begin{array}{ll}
\dfrac{1}{2}\;\dfrac{d}{dt}\left(\|\nabla\;\Phi\|^2_{L^2\left(\Omega\right)}+\|\nabla\;N\|^2_{L^2\left(\Omega\right)}\right)\leq&\displaystyle\widehat{C}_1\left(\|\nabla\;\Phi\|_{L^{2}\left(\Omega\right)}^2+\|\nabla\;N\|_{L^{2}\left(\Omega\right)}^2\right)+\widehat{C}_2\;\|\nabla\;u\|_{L^{2}\left(\Omega\right)}^2,
\end{array}
\end{equation}
with $\widehat{C}_i>0$ for $i=1,2$. Since $\nabla u$ is bounded in $L^2\left(0,T_f;L^2\left(\Omega\right)\right)$, applying Gronwall's Lemma, it holds that
$$\nabla\;N\;\text{and}\;\nabla\;\Phi\;\text{are bounded in }\;L^\infty\left(0,T_f;L^{2}\left(\Omega\right)\right).$$

Finally, using $\left(\ref{nablaT}\right)$ in $\left(\ref{ec_grad_phi}\right)$ and $\left(\ref{ec_grad_n}\right)$, we obtain that
$$\left(\nabla\;N\right)_t\;\text{and}\;\left(\nabla\;\Phi\right)_t\;\text{are bounded in }\;L^2\left(0,T_f;L^{2}\left(\Omega\right)\right).$$

\begin{col}
	$\nabla T$ is bonded in $L^2\left(0,T_f;L^{2}\left(\Omega\right)\right)$.
\end{col}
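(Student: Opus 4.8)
The plan is to read the result directly off the change-of-variable identity $\left(\ref{nablaT}\right)$,
$$\nabla T=\chi\;T\;\nabla\Phi+e^{\chi\;\Phi}\;\nabla u,$$
and to estimate the two summands separately using the bounds already established in the proofs of Theorem $\ref{teo_estimaciones}$ b) and c). The key observation is that no new differential inequality is required: every factor on the right-hand side has already been controlled, so the corollary reduces to an elementary product estimate.

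For the first summand I would invoke that $T$ is bounded in $L^\infty\left(0,T_f;L^\infty\left(\Omega\right)\right)$ (from Lemma $\ref{estimaciones3}$) together with the fact, just proved in part c), that $\nabla\Phi$ is bounded in $L^\infty\left(0,T_f;L^2\left(\Omega\right)\right)$. Hence $\chi\;T\;\nabla\Phi$ is bounded in $L^\infty\left(0,T_f;L^2\left(\Omega\right)\right)$, and since $T_f<\infty$ this space embeds continuously into $L^2\left(0,T_f;L^2\left(\Omega\right)\right)$. For the second summand I would use that $0\le\Phi\le K$ forces $e^{\chi\;\Phi}\le e^{\chi\;K}$ to be bounded, while $\nabla u$ is bounded in $L^2\left(0,T_f;L^2\left(\Omega\right)\right)$ again by Lemma $\ref{estimaciones3}$; therefore $e^{\chi\;\Phi}\;\nabla u$ lies in $L^2\left(0,T_f;L^2\left(\Omega\right)\right)$ as well.

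Adding the two estimates and applying the triangle inequality in $L^2\left(0,T_f;L^2\left(\Omega\right)\right)$ then yields the claim. I do not anticipate any genuine obstacle: the corollary is a direct bookkeeping consequence of the bounds on $u$, $\nabla u$, $T$ and $\nabla\Phi$ obtained earlier. The only point requiring mild care is that $\nabla\Phi$ enters with an $L^\infty$-in-time bound, which I downgrade to $L^2$-in-time using the finiteness of $T_f$, whereas $\nabla u$ already enters with an $L^2$-in-time bound; both summands consequently belong to $L^2\left(0,T_f;L^2\left(\Omega\right)\right)$, which is precisely the asserted regularity of $\nabla T$.
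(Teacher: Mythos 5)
Your argument is correct and coincides with the paper's (implicit) proof: the identity $\nabla T=\chi\,T\,\nabla\Phi+e^{\chi\,\Phi}\,\nabla u$, the $L^\infty$ bounds on $T$ and $\Phi$, the $L^\infty\left(0,T_f;L^{2}\left(\Omega\right)\right)$ bound on $\nabla\Phi$ from part c), and the $L^2\left(0,T_f;L^{2}\left(\Omega\right)\right)$ bound on $\nabla u$ from Lemma \ref{estimaciones3} are exactly the ingredients the paper uses, having already noted $|\nabla T|\leq C\left(|\nabla\Phi|+|\nabla u|\right)$. No gaps.
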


%
%
%
%
%

\section{A FE numerical scheme}\label{esquema_espacio}

In this Section, we are going to design an uncoupled and linear fully discrete scheme to approach $\left(\ref{probOriginal}\right)$-$\left(\ref{condinicio}\right)$ by means of an Implicit-Explicit (IMEX) Finite Difference in time and $P_1$ continuous finite element with "mass-lumping" in space discretization. This scheme will preserve the pointwise estimates that appear in Lemma $\ref{estimaciones1}$  considering acute triangulations.
\\

Now we introduce the hypotheses required along this section.

\begin{enumerate}[a)]
	\item Let $0<T_f<+\infty$. We consider the uniform time partition $$\displaystyle\left(0,T_f\right]=\bigcup^{K_f-1}_{k=0}\left(t_{k},t_{k+1}\right],$$ with $t_k=k\:dt$ where $K_f\in\mathbb{N}$ and $dt=\dfrac{T_f}{K_f}$ is the time step. Let $\Omega\subseteq\mathbb{R}^2$ or $\mathbb{R}^3$ a bounded domain with polygonal or polyhedral lipschitz-continuous boundary.
	\item  Let $\left\{\mathcal{T}_h\right\}_{h>0}$ be a family of shape-regular, quasi-uniform triangulations of $\overline{\Omega}$ formed by acute N-simplexes (triangles in $2$D and tetrahedral in $3$D with all angles lowers than $\pi/2$), such that $$\overline{\Omega}=\displaystyle\bigcup_{\mathcal{K}\in\mathcal{T}_h}\mathcal{K},$$ where $h=\displaystyle\max_{\mathcal{K}\in\mathcal{T}_h}h_{\mathcal{K}}$, with $h_{\mathcal{K}}$ being the diameter of $\mathcal{K}$. We denote $\mathcal{N}_h=\left\{a_i\right\}_{i\in I}$ the set of all the nodes of $\mathcal{T}_h$. 
	\item Conforming piecewise linear, finite element spaces associated to $\mathcal{T}_h$ are assumed for approximating $H^1\left(\Omega\right)$:
	$$N_h=\left\{n_h\in\mathcal{C}^0\left(\overline{\Omega}\right)\;\;:\;\;n_h\vert_{\mathcal{K}}\in\mathcal{P}_1\left(\mathcal{K}\right),\;\;\forall\; \mathcal{K}\in\mathcal{T}_h\right\}$$
	and its Lagrange basis is denoted by $\left\{\varphi_a\right\}_{a\in\mathcal{N}_h}$.
\end{enumerate}

Let $I_h:\mathcal{C}^0\left(\overline{\Omega}\right)\rightarrow N_h$ be the nodal interpolation operator and consider the discrete inner product
$$\left(n_h,\overline{n}_h\right)_h=\int_{\Omega}I_h\left(n_h\cdot\overline{n}_h\right)=\sum_{a\in \mathcal{N}_h}n_h\left(a\right)\;\overline{n}_h\left(a\right)\int_{\Omega}\varphi_a,\quad\forall n_h,\overline{n}_h\in N_h$$
which induces the discrete norm $\| n_h\|_h=\sqrt{\left(n_h,n_h\right)_h}$ defined on $N_h$ (that is equivalent to $L^2\left(\Omega\right)$-norm).
\\

Before building the numerical scheme, we will transform the first equation of $\left(\ref{probOriginal}\right)$ into a non-linear diffusion equation throughout the change of variable $T=u\;e^{\chi\;\Phi}$ as in Lemma $\ref{estimaciones3}$. Therefore, the first equation of $\left(\ref{probOriginal}\right)$ changes to:

\begin{equation}\label{ecuacion_u}
e^{\chi\;\Phi}\;u_t-\nu\nabla\cdot\left(e^{\chi\;\Phi}\;\nabla\;u\right)=\widehat{f}_1\left(u,N,\Phi\right)
\end{equation}
where 
\begin{equation}\label{f1gorro}
\begin{array}{ll}
\widehat{f}_1\left(u,N,\Phi\right)=&T\left[\rho\;P\left(\Phi,T\right)+\chi\;\Phi\left(\gamma\;R\left(\Phi,T\right)\left(\dfrac{T+N+\Phi}{K}\right)+\delta\;Q\left(\Phi,T\right)\right)\right]\\
\\
&-T\left[\rho\;P\left(\Phi,T\right)\left(\dfrac{T+N+\Phi}{K}\right)+\alpha\;S\left(\Phi,T\right)+\chi\;\gamma\;R\left(\Phi,T\right)\;\Phi\right].
\end{array}
\end{equation}

Thus, 
we consider the following linear uncoupled numerical scheme for $\left(\ref{ecuacion_u}\right)$ jointly with $\left(\ref{probOriginal}\right)_b$ and $\left(\ref{probOriginal}\right)_c$: given $u^k_h, N^k_h, \Phi^k_h\in N_h$, find $u_{h}^{k+1}, N_{h}^{k+1}, \Phi_{h}^{k+1}\in N_h$ in a decoupled way (first $\Phi$, then $u$ and finally $N$) satisfying
\begin{equation}\label{eqT_space}
\begin{array}{ccl}
\left(e^{\chi\;\Phi^k_h}\;\delta_t u_{h}^{k+1}, v\right)_h+\nu\;\left(e^{\chi\;\Phi^k_h}\;\nabla\;u_{h}^{k+1},\nabla v\right)&=&\left(\left(\widehat{f}_1\right)_h^k,v\right)_h,\quad \forall v\in N_h,
\end{array}
\end{equation}
\begin{equation}\label{eqN_space}
\begin{array}{ccl}
\delta_t N_{h}^{k+1}\left(a\right) &=&\left(\widehat{f}_2\right)_{h}^{k}\left(a\right),\quad \forall a\in\mathcal{N}_h,\\
\end{array}
\end{equation}
\begin{equation}\label{eqF_space}
\begin{array}{ccl}
\delta_t \Phi_{h}^{k+1}\left(a\right) &=&\left(\widehat{f}_3\right)_{h}^{k}\left(a\right),\quad \forall a\in\mathcal{N}_h.\\
\end{array}
\end{equation}

We have denoted
	$$\delta_t u_{h}^{k+1}=\dfrac{u_h^{k+1}-u_h^{k}}{dt}$$ 
	and similarly for $\delta_t N_h^{k+1}$ and $\delta_t \Phi_h^{k+1}$. The approximation of the initial conditions are taken as
\begin{equation}\label{condinicio_espacio}
u^0_{h}=I_h\left(u_0\right)\in N_h,\;\;N^0_{h}=I_h\left(N_0\right)\in N_h ,\;\;\Phi^0_{h}=I_h\left(\Phi_0\right)\in N_h
\end{equation}
where we consider for simplicity that $T_0,\;N_0,\Phi_0\in\mathcal{C}^0\left(\overline{\Omega}\right)$ with $u_0=e^{-\chi\;\Phi_0}\;T_0$. 
\\

Finally, the functions $\left(\widehat{f}_i\right)_{h}^{k}$ for $i=1,2,3$ in $\left(\ref{eqT_space}\right)$, $\left(\ref{eqN_space}\right)$ and $\left(\ref{eqF_space}\right)$, have the following definitions:

\begin{equation}\label{f1_space}
\begin{array}{ll}
\left(\widehat{f}_1\right)_{h}^{k}=&T^k_h\left(\rho\;P^k_h+\chi\;\Phi^{k}_h\left(\gamma\;R^k_h\left(\dfrac{T_{h}^{k}+N^k_h+\Phi^k_h}{K}\right)+\delta\;Q^k_h\right)\right)-\\
\\
&\displaystyle- T^{k+1}_h\left(\rho\;P^k_h\left(\dfrac{T_{h}^{k}+N^k_h+\Phi^k_h}{K}\right)+\alpha\;S^k_h+\chi\;\gamma\;R^k_h\;\Phi^{k}_h\right),
\end{array}
\end{equation}

\begin{equation}\label{f2_space}
\begin{array}{ll}
\left(\widehat{f}_2\right)_{h}^{k}=&\displaystyle \alpha\;S^k_h\;T_{h}^{k+1}+ \delta\;Q^k_h\;\Phi^{k+1}_h,
\end{array}
\end{equation}	

\begin{equation}\label{f3_space}
\begin{array}{ll}
\left(\widehat{f}_3\right)_{h}^{k}=&\gamma\;R^k_h\;\Phi^{k}_h\left(1-\dfrac{\Phi^{k+1}_h}{K}\right)-\Phi^{k+1}_h\left(\gamma\;R^k_h\;\dfrac{T_{h}^{k}+N^k_h}{K}+  \delta\;Q^k_h\;\Phi^{k+1}_h\right). 
\end{array}
\end{equation}

The functions $P^k_h$,  $S^k_h$, $R^k_h$ and $Q^k_h$ in $\left(\ref{f1_space}\right)$-$\left(\ref{f3_space}\right)$, are the corresponding dimensionless factors $P\left(\Phi^k_h,T^k_h\right)$, $S\left(\Phi^k_h,T^k_h\right)$, $R\left(\Phi^k_h,T^k_h\right)$ and $Q\left(\Phi^k_h,T^k_h\right)$ defined in $\left(\ref{funcion_P}\right)$-$\left(\ref{funcion_Q}\right)$ with $T_h^k=e^{\chi\;\Phi^{k}_h}\;u_{h}^{k}$ and $T^{k+1}_h=e^{\chi\;\Phi^{k+1}_h}\;u^{k+1}_h$.

%
%

\begin{obs}
	There exists an unique solution of scheme $\left(\ref{eqT_space}\right)$-$\left(\ref{f3_space}\right)$  because:
	\begin{enumerate}
		\item $\Phi_{h}^{k+1}\left(a\right)$ can be computed directly from $\left(\ref{eqF_space}\right)$.
		\item There exists an unique $u_{h}^{k+1}$ solution of $\left(\ref{eqT_space}\right)$ by Lax-Milgram theorem.
		\item $N_{h}^{k+1}\left(a\right)$ can be computed directly from $\left(\ref{eqN_space}\right)$.
	\end{enumerate}
\end{obs}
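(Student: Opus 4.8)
The plan is to prove existence and uniqueness by induction on the time index $k$, exploiting the fully decoupled structure of the scheme. Suppose that at level $k$ we already dispose of $u_h^k, N_h^k, \Phi_h^k \in N_h$ satisfying the pointwise bounds of Theorem~\ref{teo_estimaciones_discreto}, that is $0 \le \Phi_h^k(a) \le K$, $T_h^k(a) = e^{\chi\,\Phi_h^k(a)}\,u_h^k(a) \ge 0$ and $N_h^k(a) \ge 0$ at every node $a$; for $k=0$ this follows from the assumptions \eqref{hipotesis0} on the data through the interpolation \eqref{condinicio_espacio}, since $u_0 = e^{-\chi\,\Phi_0}\,T_0 \ge 0$. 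I would then solve the three equations in the prescribed order $\Phi \to u \to N$, checking that each step determines its unknown uniquely, which closes the induction.

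First, equation \eqref{eqF_space} decouples node by node. Evaluating \eqref{f3_space} at a node $a$, inserting it into \eqref{eqF_space} and multiplying by $dt$ produces a scalar quadratic $A\,x^2 + B\,x + C = 0$ in the unknown $x := \Phi_h^{k+1}(a)$, with $A = dt\,\delta\,Q_h^k(a) \ge 0$, $B = 1 + \frac{dt\,\gamma}{K}\,R_h^k(a)\bigl(\Phi_h^k(a) + T_h^k(a) + N_h^k(a)\bigr) > 0$ and $C = -\,\Phi_h^k(a)\bigl(1 + dt\,\gamma\,R_h^k(a)\bigr) \le 0$, the signs being a consequence of \eqref{PQ_menor1} and the inductive hypothesis. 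If $A > 0$ the product of the two real roots equals $C/A \le 0$, so exactly one of them is nonnegative; if $A = 0$ the equation is linear with unique root $-C/B \ge 0$. In either case there is a single admissible value, taken as $\Phi_h^{k+1}(a)$, and it is automatically $\ge 0$. Symmetrically, once $\Phi_h^{k+1}$ and (after the next step) $u_h^{k+1}$ are available, equation \eqref{eqN_space} with \eqref{f2_space} is entirely explicit and returns $N_h^{k+1}(a)$ directly, so this last step needs no solvability discussion.

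The main step is \eqref{eqT_space}. Since $\Phi_h^{k+1}$ has already been fixed, the quantity $T_h^{k+1} = e^{\chi\,\Phi_h^{k+1}}\,u_h^{k+1}$ appearing in \eqref{f1_space} is \emph{linear} in the unknown $u_h^{k+1}$; moving its contribution to the left-hand side turns \eqref{eqT_space} into a variational problem $a(u_h^{k+1}, v) = \ell(v)$ for all $v \in N_h$, where
\[
a(u,v) = \tfrac{1}{dt}\bigl(e^{\chi\,\Phi_h^k}\,u, v\bigr)_h + \nu\,\bigl(e^{\chi\,\Phi_h^k}\,\nabla u, \nabla v\bigr) + \bigl(c_h^k\,e^{\chi\,\Phi_h^{k+1}}\,u, v\bigr)_h,
\]
the zeroth-order coefficient being $c_h^k = \rho\,P_h^k\,\frac{T_h^k + N_h^k + \Phi_h^k}{K} + \alpha\,S_h^k + \chi\,\gamma\,R_h^k\,\Phi_h^k$ and $\ell$ collecting the remaining, already known, level-$k$ data. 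On the finite-dimensional space $N_h$ the continuity of $a$ and $\ell$ is immediate once one uses $0 \le \Phi_h^k, \Phi_h^{k+1} \le K$ (hence $e^{\chi\,\Phi} \le e^{\chi K}$) together with the bounds \eqref{PQ_menor1} on the factor functions. For coercivity, the inductive bounds $T_h^k, N_h^k, \Phi_h^k \ge 0$ give $c_h^k \ge 0$, so the last term is nonnegative; since $e^{\chi\,\Phi_h^k} \ge 1$ the first two terms control $\tfrac{1}{dt}\|u\|_h^2 + \nu\,\|\nabla u\|_{L^2}^2$, and the equivalence of $\|\cdot\|_h$ with the $L^2$-norm yields $a(u,u) \ge \beta\,\|u\|_{H^1}^2$ for some $\beta > 0$. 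Lax--Milgram then provides a unique $u_h^{k+1} \in N_h$.

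The delicate point I expect is exactly this coupling between solvability and the pointwise estimates: the coercivity of $a$ and the selection of the correct root for $\Phi_h^{k+1}$ both hinge on $T_h^k, N_h^k, \Phi_h^k \ge 0$ and on $0 \le \Phi_h^k \le K$, which are precisely the bounds \eqref{estimaciones_discretas} of Theorem~\ref{teo_estimaciones_discreto}. For this reason I would not isolate this existence/uniqueness statement but prove it jointly with those estimates by the same induction, so that the bounds carried from level $k$ simultaneously make the present step uniquely solvable and, once complemented by the sign arguments valid on acute triangulations, propagate to level $k+1$. As a fallback, the sign of $c_h^k$ could be avoided by choosing $dt$ small enough for the mass term to dominate; invoking the inductive nonnegativity is preferable because it keeps coercivity valid for every $dt > 0$.
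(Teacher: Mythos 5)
Your proof is correct and takes essentially the same route as the paper: the paper's entire justification is the three enumerated items, and your additions --- the induction that carries the sign bounds $0\le \Phi_h^k\le K$, $T_h^k,N_h^k\ge 0$ from level $k$, and the check that the zeroth-order coefficient $c_h^k\ge 0$ so that the bilinear form associated with \eqref{eqT_space} is coercive for every $dt>0$ --- are exactly the details needed to make the Lax--Milgram step and the two explicit steps rigorous. The only substantive divergence is the $\Phi$-update: you read \eqref{f3_space} literally, where the factor $\delta\,Q_h^k\,\Phi_h^{k+1}$ produces a nodewise quadratic, and you select the unique nonnegative root; the paper instead treats this factor as $\delta\,Q_h^k$ in the proof of Lemma \ref{positivo_space} and announces the scheme as linear, so the intended update is an explicit nodewise formula with a strictly positive denominator and uniqueness is immediate. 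Under your literal reading the nodewise equation can have two real roots (one of them negative), so the uniqueness asserted in the Remark only holds once the nonnegativity constraint is built into the definition of the scheme, exactly as you do; with the paper's (evidently intended) linear version this caveat disappears. Either way your conclusion, and the decoupled $\Phi\to u\to N$ structure of your argument, match the paper's.
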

\subsection{Proof of Theorem $\ref{teo_estimaciones_discreto}$}

In this part, we are going to get a priori energy estimates for the fully discrete solution $u_h^{k+1}$, $N_h^{k+1}$ and $\Phi_{h}^{k+1}$ (and hence, for $T_h^{k+1}$) of $\left(\ref{eqT_space}\right)$, $\left(\ref{eqN_space}\right)$ and $\left(\ref{eqF_space}\right)$ which are independent of $(h, k)$. 
\\

The following result is based on the hypothesis of acute triangulations to get a discrete maximum principle, see  \cite{Ciarlet_1973}. In fact, we arrive at discrete version of Lemma $\ref{estimaciones1}$.

\begin{lem}[Proof of Theorem $\ref{teo_estimaciones_discreto}$]\label{positivo_space}
	Let $u_h^k,\;N_h^k,\;\Phi_{h}^k\in N_h$ with $T_h^k=e^{\chi\;\Phi^{k}_h}\;u_{h}^{k}$ such that $0\leq u_h^k,\;N_h^k,\;\Phi_{h}^k$ in $\Omega$ (in particular $T_h^k\geq0$ in $\Omega$). Then, $0\leq\Phi_{h}^{k+1}\leq K$ and $u_h^{k+1},\;N_h^{k+1}\geq0$ in $\Omega$ (and also $T_h^{k+1}\geq0$).
\end{lem}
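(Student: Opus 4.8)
The plan is to establish the three pointwise bounds in exactly the order in which the scheme decouples the unknowns---first $\Phi_h^{k+1}$, then $u_h^{k+1}$ (hence $T_h^{k+1}=e^{\chi\,\Phi_h^{k+1}}u_h^{k+1}$), and finally $N_h^{k+1}$---so that at each stage the previously computed quantities are already known to carry the right sign. This is the fully discrete analogue of Lemma \ref{estimaciones1}: the continuous testing arguments get replaced by nodewise scalar sign analysis, except for $u_h^{k+1}$, where a discrete maximum principle is needed; the acute-triangulation hypothesis enters only there.

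For $\Phi_h^{k+1}$ I would argue node by node. Fixing $a\in\mathcal{N}_h$ and setting $x=\Phi_h^{k+1}(a)$, equations \eqref{eqF_space} and \eqref{f3_space} reduce to a scalar quadratic $Ax^2+Bx+C=0$ with $A=dt\,\delta\,Q_h^k(a)\ge0$, $B=1+\tfrac{dt\,\gamma\,R_h^k(a)}{K}\big(\Phi_h^k(a)+T_h^k(a)+N_h^k(a)\big)\ge1$, and $C=-\Phi_h^k(a)\big(1+dt\,\gamma\,R_h^k(a)\big)\le0$, all coefficients being signed thanks to $T_h^k,N_h^k,\Phi_h^k\ge0$ and \eqref{PQ_menor1}. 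Since $C\le0$ the product of the roots is nonpositive, so there is a unique nonnegative root, which is the value the scheme assigns to $\Phi_h^{k+1}(a)$ (this also makes precise the ``computed directly'' claim of the Remark) and yields $\Phi_h^{k+1}\ge0$. For the upper bound I would evaluate the quadratic at $x=K$, obtaining $AK^2+BK+C=dt\,\delta\,Q_h^k(a)K^2+dt\,\gamma\,R_h^k(a)\big(T_h^k(a)+N_h^k(a)\big)+\big(K-\Phi_h^k(a)\big)\ge0$, where I use that $\Phi_h^k(a)\le K$; since the parabola opens upward and is $\le0$ at $x=0$, its nonnegative root lies in $[0,K]$, giving $\Phi_h^{k+1}\le K$. (The bound $\Phi_h^k\le K$ is the one propagated by the induction in Theorem \ref{teo_estimaciones_discreto}, valid initially because $\Phi_0\le K$.)

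The step I expect to be the main obstacle is the nonnegativity of $u_h^{k+1}$, which I would obtain by a discrete maximum principle. Let $a_0$ minimize $u_h^{k+1}$ over $\mathcal{N}_h$ and suppose $u_h^{k+1}(a_0)<0$. Testing \eqref{eqT_space} with $v=\varphi_{a_0}$, mass lumping turns the time term into $e^{\chi\,\Phi_h^k(a_0)}\,\delta_t u_h^{k+1}(a_0)\int_\Omega\varphi_{a_0}$. For the diffusion term I would use that on an acute mesh the (piecewise constant) products $\nabla\varphi_a\cdot\nabla\varphi_{a_0}$ are $\le0$ for $a\ne a_0$, so the strictly positive weight $e^{\chi\,\Phi_h^k}$ keeps the off-diagonal entries $M_{a_0,a}=\nu\int_\Omega e^{\chi\,\Phi_h^k}\nabla\varphi_a\cdot\nabla\varphi_{a_0}$ nonpositive; since $\sum_a\varphi_a\equiv1$ the rows sum to zero, so the diffusion contribution equals $\sum_{a\ne a_0}M_{a_0,a}\big(u_h^{k+1}(a)-u_h^{k+1}(a_0)\big)\le0$ because $a_0$ is a minimizer. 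Finally $(\widehat f_1)_h^k(a_0)$ splits, by \eqref{f1_space}, into a nonnegative explicit part proportional to $T_h^k(a_0)\ge0$ minus a part proportional to $T_h^{k+1}(a_0)=e^{\chi\,\Phi_h^{k+1}(a_0)}u_h^{k+1}(a_0)$ with nonnegative coefficient; since $u_h^{k+1}(a_0)<0$ makes $T_h^{k+1}(a_0)<0$, this reaction is $\ge0$. Inserting the three contributions into \eqref{eqT_space} gives $e^{\chi\,\Phi_h^k(a_0)}\,\delta_t u_h^{k+1}(a_0)\int_\Omega\varphi_{a_0}\ge0$, hence $u_h^{k+1}(a_0)\ge u_h^k(a_0)\ge0$, a contradiction. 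Thus $u_h^{k+1}\ge0$ at every node and, being $P_1$, everywhere, so $T_h^{k+1}\ge0$.

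The bound on $N_h^{k+1}$ is then immediate: \eqref{eqN_space}--\eqref{f2_space} read nodewise as $N_h^{k+1}(a)=N_h^k(a)+dt\big(\alpha\,S_h^k(a)\,T_h^{k+1}(a)+\delta\,Q_h^k(a)\,\Phi_h^{k+1}(a)\big)$, and every term is nonnegative by hypothesis and by the bounds just proved, so $N_h^{k+1}\ge0$. The genuinely delicate points are confined to the $u$-step: verifying that the acute-triangulation hypothesis gives nonpositive off-diagonal entries of the \emph{weighted} stiffness matrix, and checking that the implicit/explicit splitting in \eqref{f1_space} places the $T_h^{k+1}$-proportional decay part on the side with the favorable sign at a negative minimum. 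Everything else is elementary scalar sign bookkeeping.
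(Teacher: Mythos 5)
Your proposal is correct and reaches all four bounds in the same decoupled order as the paper ($\Phi$, then $u$ and hence $T$, then $N$), but the two middle steps are executed by genuinely different mechanisms. For $\Phi_h^{k+1}$ the paper multiplies the nodal equation by $\left(\Phi_h^{k+1}(a)\right)_-$ and by $\left(\Phi_h^{k+1}(a)-K\right)_+$ and does sign bookkeeping on $\left(\widehat f_3\right)_h^k$, whereas you solve the nodal relation explicitly as a quadratic $Ax^2+Bx+C=0$ and locate its nonnegative root in $[0,K]$; your route has the merit of confronting head-on the fact that, as written, $\left(\ref{f3_space}\right)$ is quadratic in $\Phi_h^{k+1}(a)$ (through the term $\delta\,Q_h^k\left(\Phi_h^{k+1}\right)^2$), so the nodal equation generically has a second, negative root and ``the'' solution must be understood as the selected nonnegative one — a point the paper's sign argument glosses over by treating that term as if it were linear. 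Note also that both your evaluation of the quadratic at $x=K$ and the paper's inequality $\left(\ref{cota_superior_F_numerico}\right)$ silently use $\Phi_h^k(a)\le K$, which is not among the stated hypotheses of the lemma but is supplied by the induction in Theorem $\ref{teo_estimaciones_discreto}$ together with $\left(\ref{hipotesis0}\right)$; you flag this, the paper does not. For $u_h^{k+1}$ you run a pointwise discrete maximum principle at a minimizing node $a_0$, testing $\left(\ref{eqT_space}\right)$ with $v=\varphi_{a_0}$, using the zero row sums of the weighted stiffness matrix and the nonpositivity of its off-diagonal entries on an acute mesh; the paper instead tests with $I_h\bigl(\left(u_h^{k+1}\right)_-\bigr)$ and splits the stiffness form into a positive diagonal part plus cross terms controlled by the same acuteness property. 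The two arguments rely on exactly the same structural facts and are equally rigorous; yours is slightly more elementary (no interpolant of the negative part, no splitting of the bilinear form), while the paper's is the more common ``energy'' formulation of the discrete maximum principle. The $N$ step is identical in substance. No gaps.
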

\begin{proof}
	\begin{itemize}
		\item Step $1$. $\Phi_{h}^{k+1}\geq0$.
		\\
		
		Multiplying $\left(\ref{eqF_space}\right)$ by $(\Phi_h^{k+1}(a))_-$ and using that $\Phi_h^k(a)\ge 0$, it holds that:
		\begin{equation}\label{positividad_F_numerico}
		\dfrac{1}{dt}\;\left(\Phi_h^{k+1}\left(a\right)\right)_-^2 \leq
		\left(\widehat{f}_3\right)_h^k\left(a\right)\;\left(\Phi_h^{k+1}\left(a\right)\right)_-.
		\end{equation}
		
		Indeed, using the form of $\left(\widehat{f}_3\right)_h^k$ given in $\left(\ref{f3_space}\right)$, the following estimates hold
		
		$$\gamma\;R^k_h\left(a\right)\;\left(\Phi^{k}_h\left(a\right)\right)\left(\Phi_h^{k+1}\left(a\right)\right)_-\leq0$$
		and
		$$- \left(\gamma\;R^k_h\left(a\right)\;\left(\dfrac{T_h^k\left(a\right)+N^{k}_h\left(a\right)+\Phi_{h}^k\left(a\right)}{K}\right)+\delta\; Q^{k}_h\left(a\right)\right)\left(\Phi_h^{k+1}\left(a\right)\right)\left(\Phi_h^{k+1}\left(a\right)\right)_-\leq 0.$$
		
		Adding the last two inequalities, one has
		\begin{equation}\label{positividad_F_numerico2}
		\left(\widehat{f}_3\right)_h^k\left(a\right)\;\left(\Phi_h^{k+1}\left(a\right)\right)_-\leq0.
		\end{equation}
		
		Therefore, from $\left(\ref{positividad_F_numerico}\right)$ and $\left(\ref{positividad_F_numerico2}\right)$, $\left(\Phi_h^{k+1}\left(a\right)\right)_-\equiv0$ $\forall a\in\mathcal{N}_h$ and this implies $\Phi_h^{k+1}\geq0$ in $\Omega$.
		
		\item Step $2$. $\Phi_h^{k+1}\leq K$.
		\\
		
		Multiplying $\left(\ref{eqF_space}\right)$ by $\left(\left(\Phi_h^{k+1}-K\right)\left(a\right)\right)_+$, it holds that
		
	   \begin{equation}\label{cota_superior_F_numerico}
		\dfrac{1}{dt}\;\left(\left(\Phi_h^{k+1}-K\right)\left(a\right)\right)_+^2 \leq\left(\widehat{f}_3\right)_{h}^{k}\left(a\right)\;\left(\Phi_h^{k+1}\left(a\right)-K\right)_+
		\end{equation}
		
		On the other hand, since in every node $a\in\mathcal{N}$, due to the form of $\left(\widehat{f}_3\right)_{h}^{k}$ given in $\left(\ref{f3_space}\right)$ the following estimates hold
		
		$$\left(\gamma\;R^k_h\left(a\right)\;\Phi^{k}_h\left(a\right)\left(1-\dfrac{\Phi^{k+1}_h\left(a\right)}{K}\right)\right)\left(\Phi_h^{k+1}\left(a\right)-K\right)_+\leq0$$
		and
		$$- \left(\gamma\;R^k_h\left(a\right)\;\left(\dfrac{T_h^k\left(a\right)+N^{k}_h\left(a\right)}{K}\right)+\delta\; Q^{k}_h\left(a\right)\right)\left(\Phi_h^{k+1}\left(a\right)\right)\left(\Phi_h^{k+1}\left(a\right)-K\right)_+\leq 0.$$
		
		Thus, adding the last two inequalities, we obtain that
		\begin{equation}\label{cota_superior_F_numerico2}
		\left(\widehat{f}_3\right)_{h}^{k}\left(a\right)\;\left(\Phi_h^{k+1}\left(a\right)-K\right)_+\leq0.
		\end{equation}
		
		Therefore, from $\left(\ref{cota_superior_F_numerico}\right)$ and $\left(\ref{cota_superior_F_numerico2}\right)$, $\left(\Phi_h^{k+1}\left(a\right)-K\right)_+\equiv0$ $\forall a\in\mathcal{N}_h$ and this implies $\Phi_h^{k+1}\leq K$ in $\Omega$.
		
\item Step $3$. $u_h^{k+1}\geq0$.
	\\
	
	Let $I_h((u_h^{k+1})_-)\in N_h$ be defined as 
	$$I_h\left(\left(u_h^{k+1}\right)_-\right)=\sum_{a\in \mathcal{N}_h}\left(u_h^{k+1}\left(a\right)\right)_-\varphi_a,$$
	where $\left(u_h^{k+1}\left(a\right)\right)_-=\min\left\{0,u_h^{k+1}\left(a\right)\right\}$. Analogously, one defines $I_h((u_h^{k+1})_+)\in N_h$ as
	$$I_h\left(\left(u_h^{k+1}\right)_+\right)=\sum_{a\in \mathcal{N}_h}\left(u_h^{k+1}\left(a\right)\right)_+\varphi_a,$$
	where $\left(u_h^{k+1}\left(a\right)\right)_+=\max\left\{0,u_h^{k+1}\left(a\right)\right\}$. Notice that $u_h^{k+1}=I_h((u_h^{k+1})_-)+I_h((u_h^{k+1})_+)$.
	\\
	
	Choosing $v=I_h((u_h^{k+1}(a))_-)$ in $\left(\ref{eqT_space}\right)$, it follows that,
	
	\begin{equation}\label{T_space_positiva}
	\begin{array}{c}
	\dfrac{1}{dt}\Big\|\left(u_h^{k+1}\right)_-\Big\|_h^2 +\nu\;\left(\left(e^{\chi\;\Phi^k_h}\right)\nabla u_h^{k+1},\nabla I_h\left(\left(u_h^{k+1}\right)_-\right)\right)\leq\\
	\\
	\leq\left(\widehat{f}_1\left(u_{h}^{k},u_{h}^{k+1},N_{h}^{k},\Phi_{h}^{k}\right),\left(u_h^{k+1}\right)_-\right)_h,
	\end{array}
	\end{equation}
	where we have used in the left hand side that $$\dfrac{1}{dt}\Big\|\left(u_h^{k+1}\right)_-\Big\|_h^2 \leq	\dfrac{1}{dt}\Big\|\left(e^{\frac{\chi\;\Phi^k_h}{2}}\right)\;\left(u_h^{k+1}\right)_-\Big\|_h^2$$ and that in every node $a\in \mathcal{N}_h$, 
	$$	
	\begin{array}{ll}
	\delta_tu_{h}^{k+1}\left(a\right)\cdot\left(u_h^{k+1}\left(a\right)\right)_-=&	\dfrac{1}{dt}\left(\Big|\left(u_{h}^{k+1}\left(a\right)\right)_-\Big|^2-u_{h}^{k}\left(a\right)\cdot\left(u_h^{k+1}\left(a\right)\right)_-\right)\geq\\
	\\	
	&\geq \dfrac{1}{dt}\left(\Big|\left(u_{h}^{k+1}\left(a\right)\right)_-\Big|^2\right)
	\end{array}
	$$ 
	using that $e^{\chi\;\Phi^k_h\left(a\right)}>0$, $u_{h}^{k}\left(a\right)\geq0$ and $\left(u_h^{k+1}\left(a\right)\right)_-\leq0$. On the other hand, we can make the following decomposition in the diffusion term
	$$
	\left(\left(e^{\chi\;\Phi^k_h}\right)\nabla u_h^{k+1},\nabla I_h\left(\left(u_h^{k+1}\right)_-\right)\right) =\left(\left(e^{\chi\;\Phi^k_h}\right)\nabla I_h\left(\left(u_h^{k+1}\right)_-\right),\nabla I_h\left(\left(u_h^{k+1}\right)_-\right)\right)+$$
	$$+\left(\left(e^{\chi\;\Phi^k_h}\right)\nabla I_h\left(\left(u_h^{k+1}\right)_+\right),\nabla I_h\left(\left(u_h^{k+1}\right)_-\right)\right)
	=\Big\|\left(e^{\chi\;\Phi^k_h}\right)^{1/2}\nabla I_h\left(\left(u_h^{k+1}\right)_-\right)\Big\|_{L^2\left(\Omega\right)}^2+$$
	$$\displaystyle+\sum_{a\neq\widetilde{a}\in \mathcal{N}_h} \left(u_h^{k+1}\left(a\right)\right)_- \left(u_h^{k+1}\left(\widetilde{a}\right)\right)_+\left(\left(e^{\chi\;\Phi^k_h}\right)\nabla\varphi_a,\nabla\varphi_{\widetilde{a}}\right).
	$$
	
	Hence, using that $\left(u_h^{k+1}\left(a\right)\right)_- \;\left(u_h^{k+1}\left(\widetilde{a}\right)\right)_+\leq0$ if $a\neq\widetilde{a}$, $e^{\chi\;\Phi^k_h\left(a\right)}$ is a positive function and that
	$$\nabla\varphi_a\cdot\nabla\varphi_{\widetilde{a}}\leq0\quad\forall a\neq\widetilde{a}\in \mathcal{N}_h$$ (owing to the hypothesis of acute triangulation), we deduce,
	\begin{equation}\label{T_diagonal}
	\begin{array}{c}
	\left(\left(e^{\chi\;\Phi^k_h}\right)\nabla u_h^{k+1},\nabla I_h\left(\left(u_h^{k+1}\right)_-\right)\right)\geq\Big\|\left(e^{\chi\;\Phi^k_h}\right)^{1/2}\nabla I_h\left(\left(u_h^{k+1}\right)_-\right)\Big\|_{L^2\left(\Omega\right)}^2.
	\end{array}
	\end{equation}
	
	Adding $\left(\ref{T_diagonal}\right)$ in $\left(\ref{T_space_positiva}\right)$, it holds that
	
	\begin{equation}\label{positividad_T_numerico}
	\begin{array}{c}
	\dfrac{1}{dt}\Big\|\left(u_h^{k+1}\right)_-\Big\|_h^2 +\nu\;\Big\|\left(e^{\chi\;\Phi^k_h}\right)^{1/2}\nabla I_h\left(\left(u_h^{k+1}\right)_-\right)\Big\|_{L^2\left(\Omega\right)}^2
	\leq\left(\left(\widehat{f}_1\right)_{h}^{k},\left(u_h^{k+1}\right)_-\right)_h.
	\end{array}
	\end{equation}
	
	On the other hand, by using that in every node $a\in \mathcal{N}_h$, due to the form of $\left(\widehat{f}_1\right)_{h}^{k}$ given in $\left(\ref{f1_space}\right)$, the following estimates hold
	$$\left(\rho\;P^k_h\left(a\right)+\chi\;\Phi^{k}_h\left(a\right)\left(\gamma\;R^k_h\left(a\right)\left(\dfrac{T_{h}^{k}\left(a\right)+N^k_h\left(a\right)+\Phi^k_h\left(a\right)}{K}\right).+\delta\;Q^k_h\left(a\right)\right)\right)\left(T_h^{k}\left(a\right)\right)\left(u_h^{k+1}\left(a\right)\right)_-\leq0$$
	and
	$$- \left(\rho\;P^k_h\left(a\right)\left(\dfrac{T_{h}^{k}\left(a\right)+N^k_h\left(a\right)+\Phi^k_h\left(a\right)}{K}\right)+\alpha\;S^k_h\left(a\right)+\chi\;\gamma\;R^k_h\left(a\right)\;\Phi^{k}_h\left(a\right)\right)\left(T_h^{k+1}\left(a\right)\right)\left(u_h^{k+1}\left(a\right)\right)_-\leq 0$$
	owing to $\left(T_h^{k+1}\left(a\right)\right)\left(u_h^{k+1}\left(a\right)\right)_-=\left(\left(u_h^{k+1}\left(a\right)\right)_-\right)^2\;e^{\chi\;\Phi^{k+1}_h}\left(a\right)\geq0$.
	\\
	
	Then, adding the last two inequalities, we obtain that
	\begin{equation}\label{positividad_T_numerico2}
	\left(\left(\widehat{f}_1\right)_{h}^{k},\left(u_h^{k+1}\right)_-\right)_h\leq0.
	\end{equation}
	
	Therefore, from $\left(\ref{positividad_T_numerico}\right)$ and $\left(\ref{positividad_T_numerico2}\right)$, $\left(u_h^{k+1}\right)_-\equiv0$ and this implies $u_h^{k+1}\geq0$ in $\Omega$. As we recover $T^{k+1}_h$ from $u^{k+1}_h$ and $\Phi^{k+1}_h$ as $T^{k+1}_h=e^{\chi\;\Phi^{k+1}_h}\;u^{k+1}_h$, we have in particular that $T^{k+1}_h\geq0$ in $\Omega$.

	\item Step $4$. $N_h^{k+1}\geq0$.
	\\
	
	Finally, for $\left(\ref{eqN_space}\right)$  it is easy to obtain that 
	\begin{equation}\label{positividad_N_numerico}
	\dfrac{1}{dt}\;\left(N_h^{k+1}\left(a\right)\right)_-^2 \leq\left(\widehat{f}_2\right)_{h}^{k}\left(a\right)\;\left(N_h^{k+1}\left(a\right)\right)_-.
	\end{equation}
	
	In addition, $\left(\widehat{f}_2\right)_{h}^{k}\left(a\right)\geq0$ in every node $a\in\mathcal{N}_h$ due to the form of $\left(\widehat{f}_2\right)_{h}^{k}$ given in $\left(\ref{f2_space}\right)$. Hence,
	\begin{equation}\label{positividad_N_numerico2}
	\left(\widehat{f}_2\right)_{h}^{k}\left(a\right)\;\left(N_h^{k+1}\left(a\right)\right)_-\leq0.
	\end{equation}
	
	Thus, from $\left(\ref{positividad_N_numerico}\right)$ and $\left(\ref{positividad_N_numerico2}\right)$, $\left(N_h^{k+1}\left(a\right)\right)_-\equiv0$ $\forall a\in\mathcal{N}_h$ and this implies $N_h^{k+1}\geq0$ in $\Omega$.
		\end{itemize}
\end{proof}

\section{Adimensionalization}\label{adimensionalizacion}

Here, we simplify the number of the parameters of $\left(\ref{probOriginal}\right)$ and present the simulations according to the dimensionless parameters. For that, we consider as one possible example of the dimensionless factors $P\left(\Phi,T\right)$, $S\left(\Phi,T\right)$, $Q\left(\Phi,T\right)$ and $R\left(\Phi,T\right)$ appearing in $\left(\ref{funciones}\right)$ satisfying the hypotheses $\left(\ref{P_menor_R_Phi}\right)$-$\left(\ref{parciales_S_Q_T_L_infitnito}\right)$, the following ones:

\begin{equation}\label{funcion_P}
P\left(\Phi,T\right)=\dfrac{\Phi}{\Phi+T},
\end{equation}

\begin{equation}\label{funcion_S}
S\left(\Phi,T\right)=\dfrac{K-\Phi}{T+\Phi+K},
\end{equation}

\begin{equation}\label{funcion_R}
R\left(\Phi,T\right)=\dfrac{T}{\dfrac{T^2}{K}+\Phi+K},
\end{equation}
and
\begin{equation}\label{funcion_Q}
Q\left(\Phi,T\right)=\dfrac{T}{\Phi+T}.
\end{equation}

These factors $P$, $S$, $Q$ and $R$ satisfy the conditions $\left(\ref{PQ_menor1}\right)$-$\left(\ref{q}\right)$. 
%
To verify $\left(\ref{P_menor_R_Phi}\right)$, observe that
$$
\frac{R(\Phi,T)\Phi}{P(\Phi,T)}=\frac{T(\Phi+T)}{T^2/K+\Phi+K}\leq C_1
$$
for some $C_1>0$. 
\\

Moreover, in $\left(\ref{funcion_S}\right)$ and $\left(\ref{funcion_R}\right)$ we consider a regularization in the denominator since without this regularization, the partial derivatives of $\left(\ref{funcion_S}\right)$ and $\left(\ref{funcion_R}\right)$ degenerate in $\left(0,0\right)$. 
\\

For the adimensionalization, we start studying the carrying capacity parameter $K>0$. We consider the change of variables 
$\widetilde{T}=\dfrac{T}{K}$, $\widetilde{N}=\dfrac{N}{K}$ and $\widetilde{\Phi}=\dfrac{\Phi}{K}$ passing the normalized capacity equal to $1$.
\\

Now, we consider the diffusion parameter $\nu$ and the tumor proliferation parameter $\rho$. We know that $\rho$ is related to the time variable while $\nu$ is related to the spatial variable. Thus, we can make the following change of the independent variables:
\begin{equation}\label{cambios_variable}
\left\{
\begin{array}{lcl}
s=\rho\;t&\Rightarrow& ds=\rho\;dt,\\
\\
y=\sqrt{\dfrac{\rho}{\nu}}\;x&\Rightarrow& dy=\sqrt{\dfrac{\rho}{\nu}}\;dx.
\end{array}
\right.
\end{equation}

Applying these changes in $\left(\ref{probOriginal}\right)$, it holds that 

\begin{equation}\label{prob_K_rho}
\left\{\begin{array}{ccl}
\dfrac{\partial \widetilde{T}}{\partial s}-\Delta\widetilde{T}+K\;\dfrac{\kappa}{\nu}\;\nabla\cdot\left(\widetilde{T}\;\nabla\widetilde{\Phi}\right)&=&\widetilde{f_1}\left(\widetilde{T},\widetilde{N},\widetilde{\Phi}\right)\\
&&\\
\dfrac{\partial \widetilde{N}}{\partial s}& = & \widetilde{f_2}\left(\widetilde{u},\widetilde{\Phi}\right)\\
&&\\
\dfrac{\partial \widetilde{\Phi}}{\partial s} & = &\widetilde{f_3}\left(\widetilde{u},\widetilde{N},\widetilde{\Phi}\right)\\
\end{array}\right.\end{equation}
where
\begin{equation}\label{funciones_K_rho}
\left\{\begin{array}{lll}
\widetilde{f_1}\left(\widetilde{T},\widetilde{N},\widetilde{\Phi}\right) &=&P\left(\widetilde{\Phi},\widetilde{T}\right)\;\widetilde{T}\left(1-\left(\widetilde{T}+\widetilde{N}+\widetilde{\Phi}\right)\right)-\dfrac{\alpha}{\rho}\;S\left(\widetilde{\Phi},\widetilde{T}\right)\;\widetilde{T},\\
\\
\widetilde{f_2}\left(\widetilde{T},\widetilde{\Phi}\right) &=& \dfrac{\alpha}{\rho}\;S\left(\widetilde{\Phi},\widetilde{T}\right)\;\widetilde{T}+\dfrac{\delta}{\rho}\;Q\left(\widetilde{\Phi},\widetilde{T}\right)\;\widetilde{\Phi},\\
\\
\widetilde{f_3}\left(\widetilde{T},\widetilde{N},\widetilde{\Phi}\right) &=& \dfrac{\gamma}{\rho}\;R\left(\widetilde{\Phi},\widetilde{T}\right)\;\widetilde{\Phi}\left(1-\left(\widetilde{T}+\widetilde{N}+\widetilde{\Phi}\right)\right)-\dfrac{\delta}{\rho}\;Q\left(\widetilde{\Phi},\widetilde{T}\right)\;\widetilde{\Phi}.
\end{array}\right.
\end{equation}

Hence, we obtain the following dimensionless parameters:

\begin{table}[H]
	\centering
	\begin{tabular}{c|c|c|c|c}
		\textbf{Dimensionless parameter} &$\kappa^*$ & $\alpha^*$   &	$\gamma^*$&	$\delta^*$ \\
		\hline
		\textbf{Original parameter} &$\vspace{2cm}K\;\dfrac{\kappa}{\nu}$&$\dfrac{\alpha}{\rho}$  &  $\dfrac{\gamma}{\rho}$ & $\dfrac{\delta}{\rho}$  
		\vspace{-2cm}
	\end{tabular}
	\caption{\label{parametros_K_rho}  Dimensionless parameters.}
\end{table}

Thus, we reduced three parameter from the original model $\left(\ref{probOriginal}\right)$: $\rho$, $\nu$ and $K$. 

\begin{obs}
	To simplify the notation, we consider along the rest of the paper: $s=t$, $y=x$, $\kappa^*=\kappa$, $\alpha^*=\alpha$, $\gamma^*=\gamma$, $\delta^*=\delta$, $\widetilde{T}=T$, $\widetilde{N}=N$, $\widetilde{\Phi}=\Phi$ and $\widetilde{f}_i=f_i$ for $i=1,2,3$.
\end{obs}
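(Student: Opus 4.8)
The plan is to recognise that this Remark is a purely notational convention and therefore carries no analytical content to be demonstrated; the only thing one should check is that the proposed relabelling is internally consistent and introduces no ambiguity. Accordingly, the ``proof'' reduces to verifying that the dimensionless system $\left(\ref{prob_K_rho}\right)$ together with the reaction functions $\left(\ref{funciones_K_rho}\right)$ reproduces, term by term, the algebraic structure of the original problem $\left(\ref{probOriginal}\right)$--$\left(\ref{funciones}\right)$ once the normalised carrying capacity is set to $1$ and the starred parameters of Table $\ref{parametros_K_rho}$ are read as the plain symbols.

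First I would note that the two changes of variables involved are invertible: the rescaling $\widetilde{T}=T/K$, $\widetilde{N}=N/K$, $\widetilde{\Phi}=\Phi/K$ is a linear bijection since $K>0$, and the change of independent variables $\left(\ref{cambios_variable}\right)$, namely $s=\rho\,t$ and $y=\sqrt{\rho/\nu}\,x$, is a diffeomorphism since $\rho,\nu>0$. Hence the tilded variables carry exactly the same information as the original ones, and no solution is lost or duplicated under the substitution; reading the plain symbols in place of the tilded ones is therefore legitimate.

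Next I would match the two formulations symbol by symbol. Comparing $\left(\ref{prob_K_rho}\right)$ with $\left(\ref{probOriginal}\right)$ one sees that the diffusion coefficient has become $1$, the chemotactic coefficient is $\kappa^{*}=K\kappa/\nu$, and the reaction functions $\left(\ref{funciones_K_rho}\right)$ coincide with $\left(\ref{funciones}\right)$ after the replacements $\rho\mapsto 1$, $\alpha\mapsto\alpha^{*}$, $\gamma\mapsto\gamma^{*}$, $\delta\mapsto\delta^{*}$ and $K\mapsto 1$. The factor functions $P,S,R,Q$ retain their form (with $K$ normalised to $1$), because $P$ and $Q$ are homogeneous of degree zero and $S,R$ keep the same algebraic expression after normalisation, so they continue to satisfy the modelling conditions $\left(\ref{PQ_menor1}\right)$--$\left(\ref{q}\right)$. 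This shows that, after dropping the tildes and stars, every equation, the non-flux boundary condition, and all the structural hypotheses underlying Theorem $\ref{teo_estimaciones}$ are preserved verbatim, so the a priori estimates and the numerical scheme may henceforth be read with the plain notation.

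The main obstacle here is not analytical but purely a matter of bookkeeping: one must make sure the abuse of notation is harmless, that is, that no two genuinely distinct quantities are collapsed onto the same symbol in the remainder of the paper. I expect this to be immediate, since from this point onward only the dimensionless variables and parameters appear, so the identifications $\widetilde{T}=T$, $\kappa^{*}=\kappa$, and so on, are unambiguous and the convention can be adopted without further comment.
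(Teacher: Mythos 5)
Your proposal is correct and matches the paper: the Remark is a pure notational convention, and the paper offers no proof beyond the derivation of the dimensionless system $\left(\ref{prob_K_rho}\right)$--$\left(\ref{funciones_K_rho}\right)$ in Section $\ref{adimensionalizacion}$, which your term-by-term matching (invertibility of the rescalings $\left(\ref{cambios_variable}\right)$, homogeneity of $P,Q$ and form-preservation of $S,R$ under $K\mapsto 1$) simply makes explicit. Nothing is missing.
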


Finally, the adimensionalizated system is the following:

\begin{equation}\label{prob_K_rho2}
\left\{\begin{array}{ccl}
\dfrac{\partial T}{\partial t}-\Delta T+\kappa\;\nabla\cdot\left(T\;\nabla\Phi\right)&=&P\left(\Phi,T\right)\;T\left(1-\left(T+N+\Phi\right)\right)-\alpha\;S\left(\Phi,T\right)\;T,\\
&&\\
\dfrac{\partial N}{\partial t}& = &\alpha\;S\left(\Phi,T\right)\;T+\delta\;Q\left(\Phi,T\right)\;\Phi,\\
&&\\
\dfrac{\partial \Phi}{\partial t} & = &\gamma\;R\left(\Phi,T\right)\;\widetilde{\Phi}\left(1-\left(T+N+\Phi\right)\right)-\delta\;Q\left(\Phi,T\right)\;\Phi.
\end{array}\right.\end{equation}

\section{Numerical Simulations}\label{numerical_simulations}

In this section, we will show some numerical simulations in order to detect which parameters of $\left(\ref{prob_K_rho2}\right)$ are more important in the behaviour of the ring width between necrosis and tumor and the regular or irregular growth of the surface of a GBM.
\\

For the numerical simulations we will use the uncoupled and linear fully discrete scheme defined in $\left(\ref{eqT_space}\right)$-$\left(\ref{eqF_space}\right)$ by means of an Implicit-Explicit (IMEX) Finite Difference in time approximation and $P_1$ continuous finite element with "mass-lumping" in space. 
\\

We will use the computational domain, $\Omega=\left(-9,9\right)\times\left(-9,9\right)$, the final time, $T_f=500$, the structured triangulation, $\left\{\mathcal{T}_h\right\}_{h>0}$ of $\overline{\Omega}$ such that $\overline{\Omega}=\displaystyle\bigcup_{\mathcal{K}\in\mathcal{T}_h}\mathcal{K}$, partitioning the edges of $\partial\Omega$ into $45$ subintervals, corresponding with the mesh size $h=0.4$ and the time step, $dt=10^{-3}$.
\\

We consider along the work necrosis zero initially and initial tumor given by:

\begin{figure}[H]
		\includegraphics[width=8cm, height=5cm]{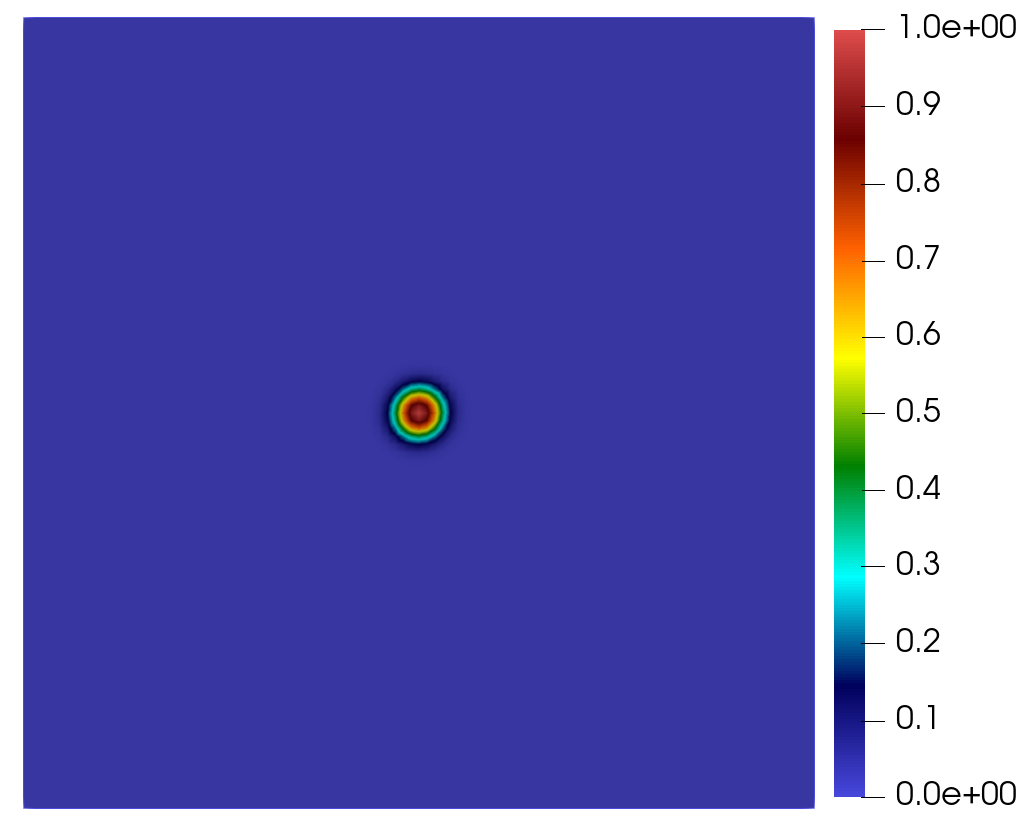}
		\centering
		\caption{Initial tumor.}
		\label{tumor_incial}
\end{figure}

For the vasculature, we will take different initial conditions depending on the kind of tumor growth considered.


\subsection{Ring width}\label{anillo}
Here, we present some numerical simulations according to the tumor-ring. Based on the study \cite{Julian_2016}, we know that tumors with a thick tumor ring have the worst prognosis as we can see in the Figure $\ref{curva_ring}$.

\begin{figure}[H]
\includegraphics[width=10cm, height=6cm]{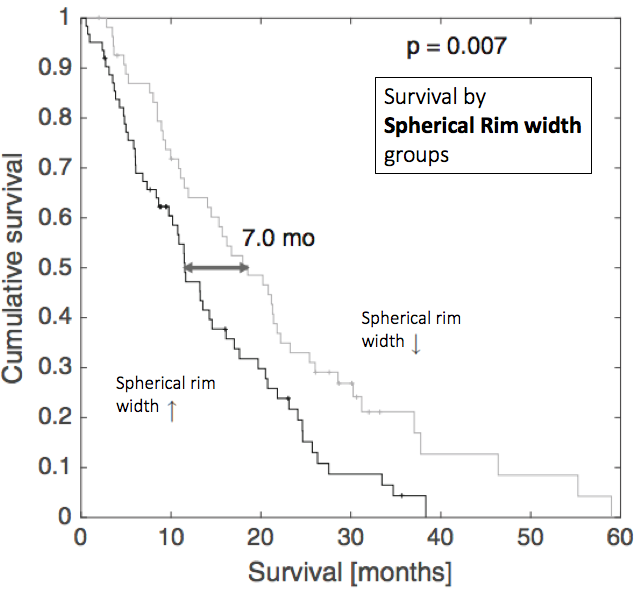}
\centering
\caption{Survival vs the ring width of GBM.}
\label{curva_ring}
\end{figure}

In order to measure different rings, we will compare the density of tumor with respect to the density of tumor and necrosis. In every simulation, we will change the value of one parameter and testing how the tumor growth changes. 
\\

Since the subjects of study are tumor and necrosis, we change the parameters of the tumor and necrosis equations, these are, $\kappa$ and $\alpha$. Then, in all the simulations the value of $\gamma$ and $\delta$ are fixed (see Table $\ref{parametros_fijos_razon_tumor_necrosis}$).

\begin{table}[H]
	\centering
	\begin{tabular}{c|c|c}
		\textbf{Variable} &$\gamma$ & 	$\delta$  \\
		\hline
		\textbf{Value} &   $0.255$  &$2.55$     \\
	\end{tabular}
	\caption{\label{parametros_fijos_razon_tumor_necrosis} Fixed value parameters.}
\end{table}

For $\kappa$ and $\alpha$, we will take either $\kappa=5$ and $\alpha\in\left[10,\;100\right]$ or $\kappa\in\left[1,\;10\right]$ and $\alpha=45$ (see Table $\ref{parametros_variables_razon_tumor_necrosis}$).

\begin{table}[H]
	\centering
	\begin{tabular}{c|c|c}
		\textbf{Variable (Fixed value)} & $\kappa\;\;\left(5\right)$   &	$\alpha\;\;\left(45\right)$  \\
		\hline
		\textbf{Ranges} & $\left[1,\;10\right]$ & $\left[10,\;100\right]$   \\ 
	\end{tabular}
	\caption{\label{parametros_variables_razon_tumor_necrosis} Variable value parameters.}
\end{table}

Moreover, we take the initial vasculature defined uniformly in space.


\subsubsection{Tumor Ring quotient}

We will start studying the ratio between proliferative tumor density, $\displaystyle\int_{\Omega}T\;dx$ and total tumor density, $\displaystyle\int_{\Omega}\left(T+N\right)\;dx$ and we consider the different values of $\kappa$ and $\alpha$ given in Table $\ref{parametros_variables_razon_tumor_necrosis}$. In fact, we compute the following "ring quotient" (RQ) coefficient:

\begin{equation}\label{RQ}
0\leq\text{RQ}=\dfrac{\displaystyle\int_{\Omega}T\;dx}{\displaystyle\int_{\Omega}\left(T+N\right)\;dx}\leq1.
\end{equation}

Thus, if RQ is near to zero, there exists a high density of necrosis (which implies slim tumor ring) whereas if RQ is close to one, there is not enough necrosis in comparison with proliferative tumor density (which means thick tumor ring). 

\begin{figure}[H]
	\centering
	\begin{subfigure}[b]{0.45\linewidth}
		\includegraphics[width=9cm, height=5cm]{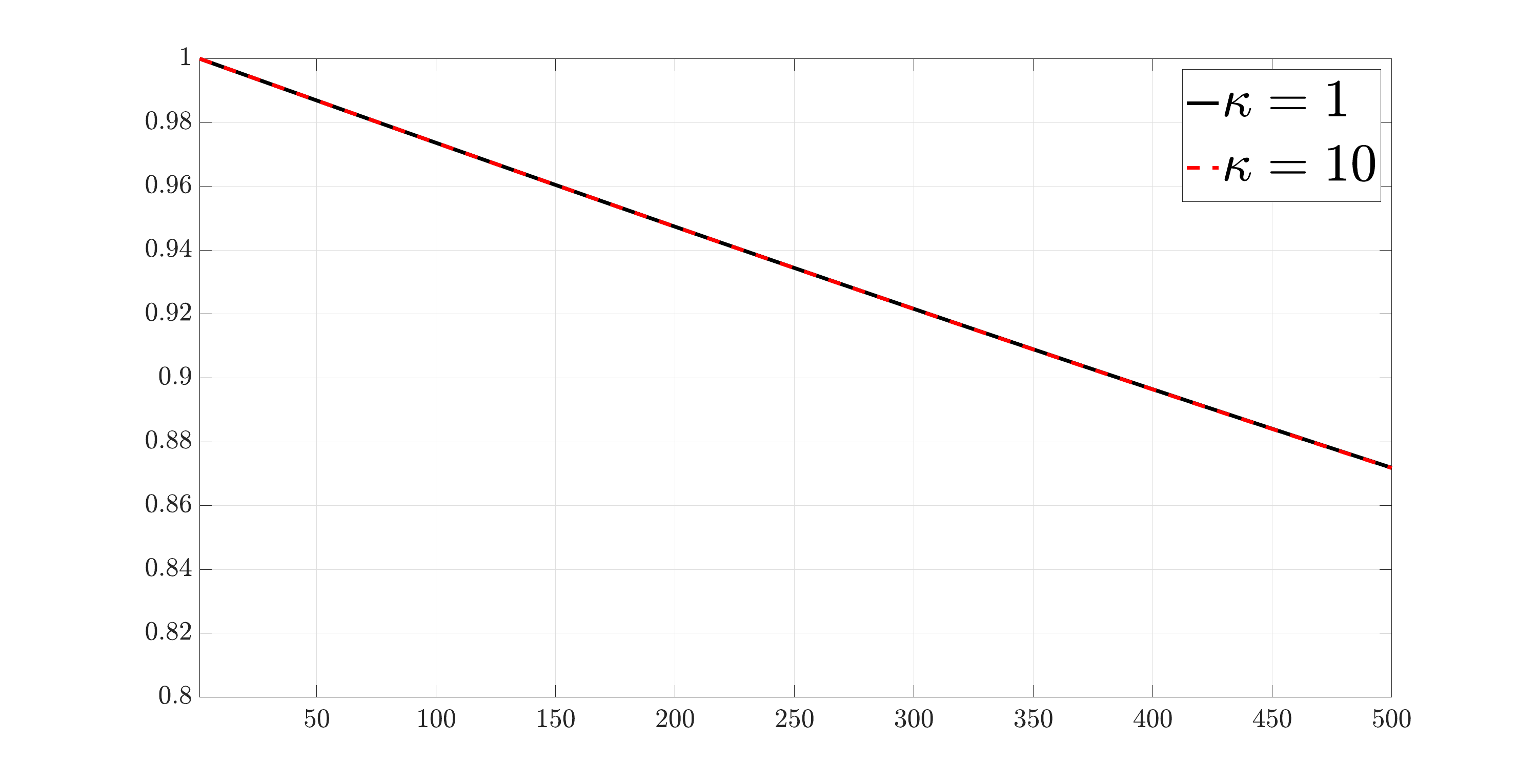}
		\centering
		\caption{RQ versus time for $\kappa$.}
		\label{Ring_dif_kappa}
	\end{subfigure}
	\hspace{1cm}
	\begin{subfigure}[b]{0.45\linewidth}
		\includegraphics[width=9cm, height=5cm]{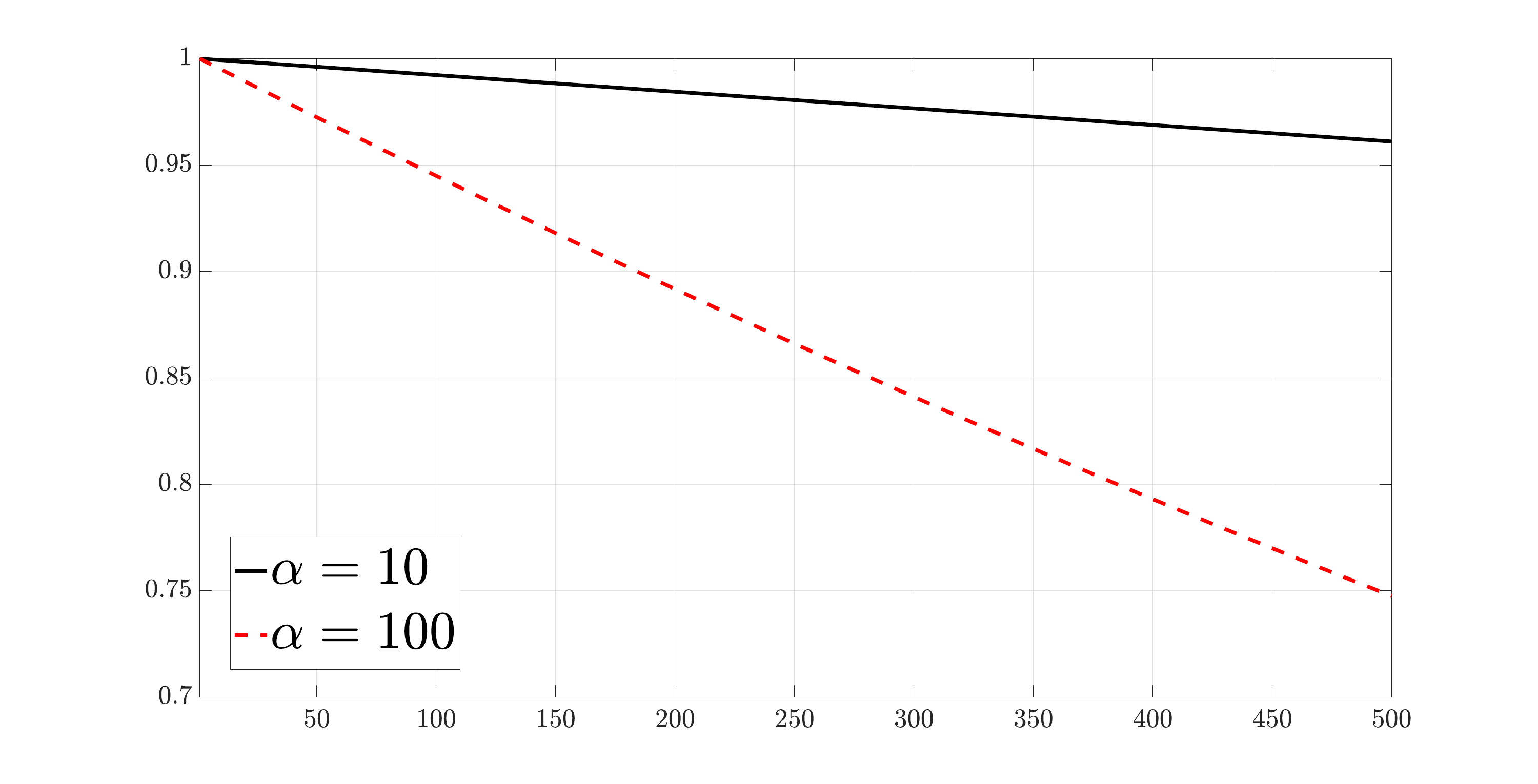}
		\centering
		\caption{RQ versus time for $\alpha$.}
		\label{Ring_dif_alpha}
	\end{subfigure}
	\centering
	\caption{RQ versus time for $\kappa$ and $\alpha$.}
	\label{RQ_kappa1_alpha_beta1}
\end{figure}

We can see in Figs $\ref{Ring_dif_kappa}-\ref{Ring_dif_alpha}$ how the model captures two kinds of tumor ring changing the parameter $\alpha$ and the tumor rings for different $\kappa$ do not change. This means that a change of the rate of tumor destruction for hypoxia produces much difference in the tumor rings.
\\

Hence, the best configurations to obtain a slim (resp. thick) ring would be choose a big (resp. small) $\alpha$.

\subsubsection{Density tumor growth}

In Figure $\ref{densidad_razon_necrosis_tumor}$, we compute the total tumor $\displaystyle\int_{\Omega}\left(T+N\right)\;dx$ for the values of $\kappa$ and $\alpha$ given in Table $\ref{parametros_variables_razon_tumor_necrosis}$.
\begin{figure}[H]
	\centering
	\begin{subfigure}[b]{0.45\linewidth}
			\includegraphics[width=9cm, height=5cm]{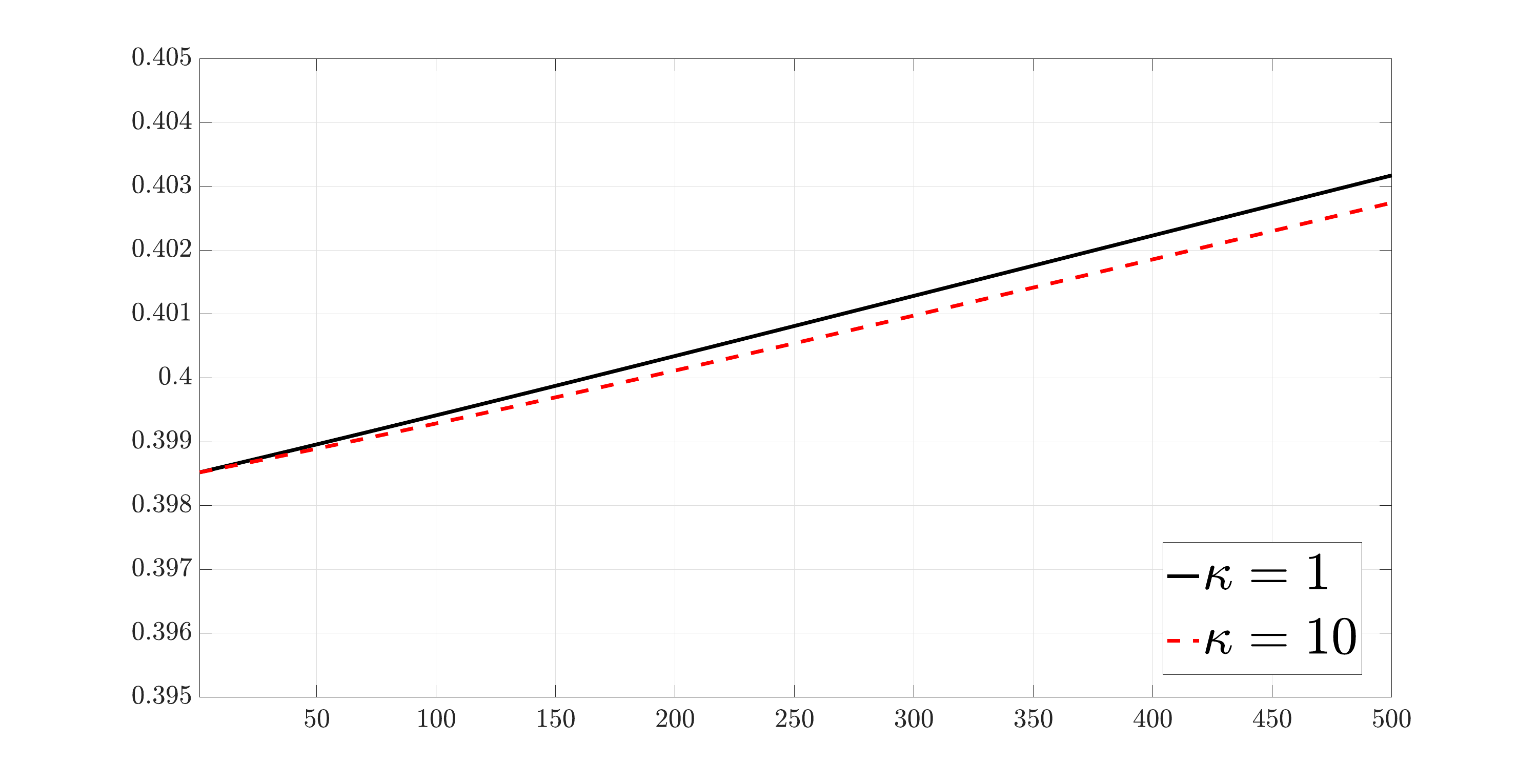}
		\centering
		\caption{$\displaystyle\int_{\Omega}\left(T+N\right)\;dx$ versus time for $\kappa$.}
		\label{densidad_anillo_kappa}
	\end{subfigure}
	\hspace{1cm}
	\begin{subfigure}[b]{0.45\linewidth}
				\includegraphics[width=9cm, height=5cm]{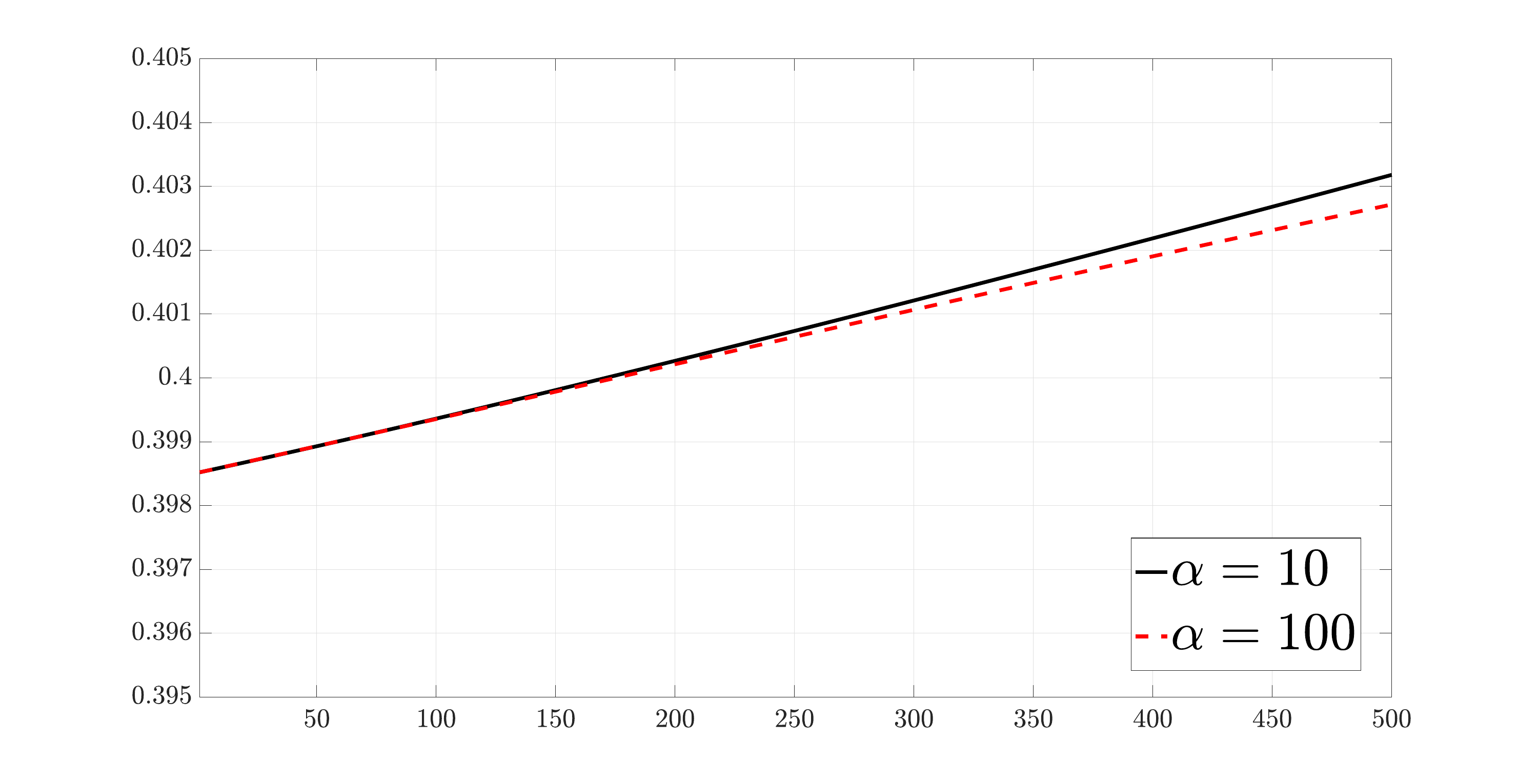}
		\centering
		\caption{$\displaystyle\int_{\Omega}\left(T+N\right)\;dx$ versus time for $\alpha$.}
		\label{densidad_anillo_alpha}
	\end{subfigure}
	\centering
	\caption{$\displaystyle\int_{\Omega}\left(T+N\right)\;dx$ versus time for $\kappa$ and $\alpha$.}
	\label{densidad_razon_necrosis_tumor}
\end{figure}

We can see in Figure $\ref{densidad_razon_necrosis_tumor}$ how the variation in the parameters $\kappa$ and $\alpha$ produces changes in the total tumor density. In fact, the total tumor decreases with respect to $\kappa$ and $\alpha$.
\\

Therefore, we conclude that $\alpha$ is the most important parameter in order to change the tumor ring and both $\kappa$ and $\alpha$ have relevance to change the total density in the tumor growth. 
\subsection{Regularity surface}\label{regularidad}

In this case, we will test if our model $\left(\ref{prob_K_rho2}\right)$ can develop different regularities for the tumor surfaces. Now, we will base our results on the study published in \cite{Victor_2018} where appears the following survival curve:

\begin{figure}[H]
	\includegraphics[width=10cm, height=6cm]{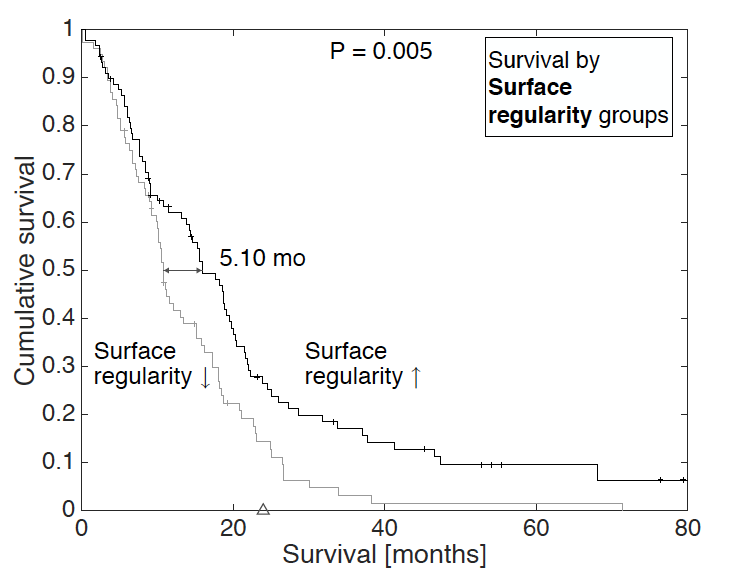}
	\centering
	\caption{Survival vs the regularity surface of GBM.}
	\label{curva_superficie_5}
\end{figure}

From Figure $\ref{curva_superficie_5}$, the authors conclude that tumors with a regular surface have better prognosis than tumors with irregular surface.
\\

Along this Section, we simulate the tumor growth with the initial tumor defined in Figure $\ref{tumor_incial}$, necrosis zero and the vasculature distributed in different zones as in Figure $\ref{vasc_incial}$:
\begin{figure}[H]
	\includegraphics[width=8cm, height=5cm]{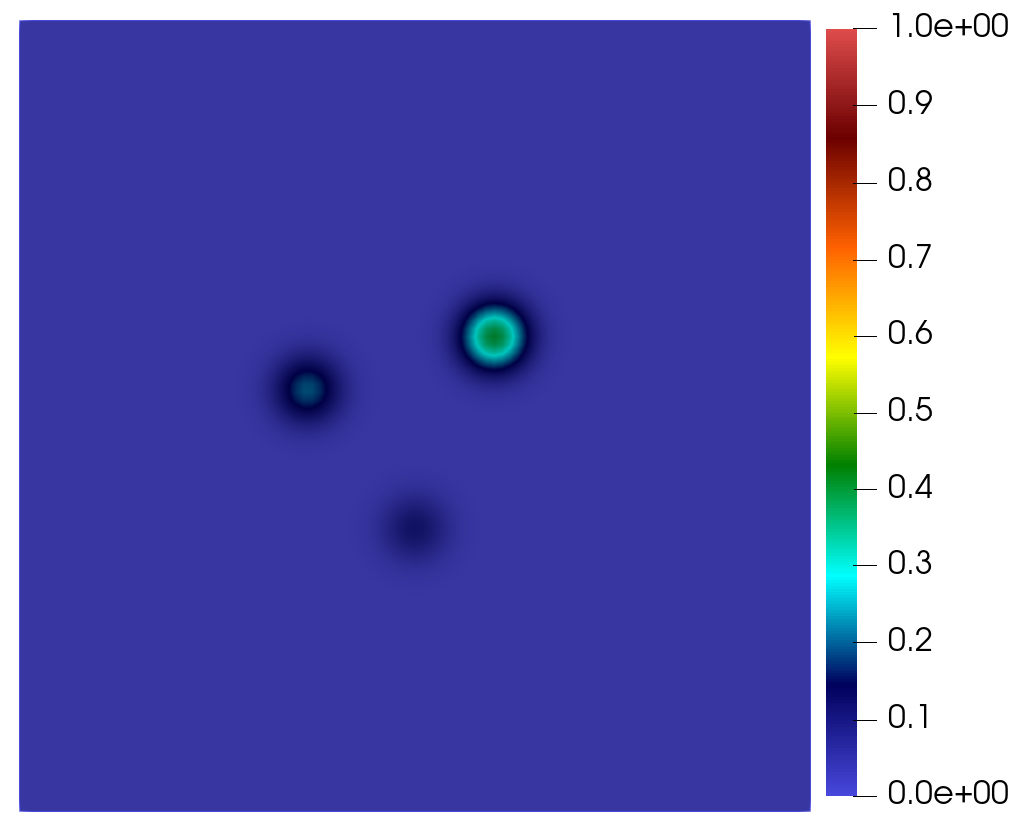}
	\centering
	\caption{Initial vasculature.}
	\label{vasc_incial}
\end{figure}
%
%
Thus, the question is if the chemotaxis term (of tumor going to the vasculature) implies tumor growth with regular or irregular surface. We remember that the chemotaxis term in $\left(\ref{prob_K_rho2}\right)$ is defined by $\kappa\;\nabla\cdot\left(T\;\nabla\Phi\right)$ with $\kappa>0$. 
\\

Now, we want to detect which parameter is more relevant changing the regularity of the tumor surface, showing some simulations in which we move the value of one of them and observe how the tumor changes. For this, it is important the interaction between tumor and vasculature. Then, we will move the parameters which appear in tumor and vasculature equations, $\kappa$, $\alpha$, $\gamma$ and $\delta$. 
\\

For these parameters we take the values of Table $\ref{parametros_variables_crec_reg_irreg}$ (each parameter will change its value in the range, jointly the other parameters take fixed values):

\begin{table}[H]
	\centering
	\begin{tabular}{c|c|c|c|c}
		\textbf{Variable (Fixed value)} &	$\kappa\;\;\left(5\right)$  & 	$\alpha\;\;\left(45\right)$  & 	$\gamma\;\;\left(0.255\right)$  &$\delta\;\;\left(2.55\right)$   \\
		\hline
		\textbf{Ranges}  & $\left[1,\;10\right]$ & $\left[10,\;100\right]$ & 	$\left[0.01,\;0.5\right]$ & $\left[0.1,\;5\right]$  
	\end{tabular}
	\caption{\label{parametros_variables_crec_reg_irreg} Variable value parameters.}
\end{table}

\subsubsection{Regularity Surface quotient}
The pictures of Figure $\ref{SQ_kappa1_alpha_beta1_gamma_delta_beta2_2}$ show the quotient between the area occupied by the total tumor (tumor and necrosis) and the area of ratio the smallest circle containing the tumor. Thus, we present these computations for the different values of $\kappa$, $\alpha$, $\gamma$ and $\delta$ chosen in Table $\ref{parametros_variables_crec_reg_irreg}$. In fact, we compute the following "surface quotient" (SQ) coefficient:

\begin{equation}\label{SQ}
0\leq\text{SQ}=\dfrac{\displaystyle\int_{\Omega}\left(T+N\right)_{\min}\;dx}{\pi\cdot \;\left(\textbf{R}_{\max}\right)^2}\leq 1
\end{equation}
where $\left(T+N\right)_{\min}$ and $\textbf{R}_{\max}$ are defined as follows: 
\begin{equation}\label{Tmin}
\left(T+N\right)_{\min}=\left\{
\begin{array}{ll}
1&\text{if}\;\;T+N\geq 0.001,\\
\\
0&\text{otherwise}. 
\end{array}
\right.
\end{equation}

\begin{equation}\label{Rmax}
\textbf{R}_{\max}=\max\left\{\text{ratio of the subdomain where }\left(T+N\right)_{\min}=1\right\}.
\end{equation}

Thus, we will deduce that if SQ is near to zero, tumor will have an irregular surface whereas if SQ is close to one, tumor will have a regular surface.
\begin{figure}[H]
	\centering
	\begin{subfigure}[b]{0.45\linewidth}
			\includegraphics[width=9cm, height=5cm]{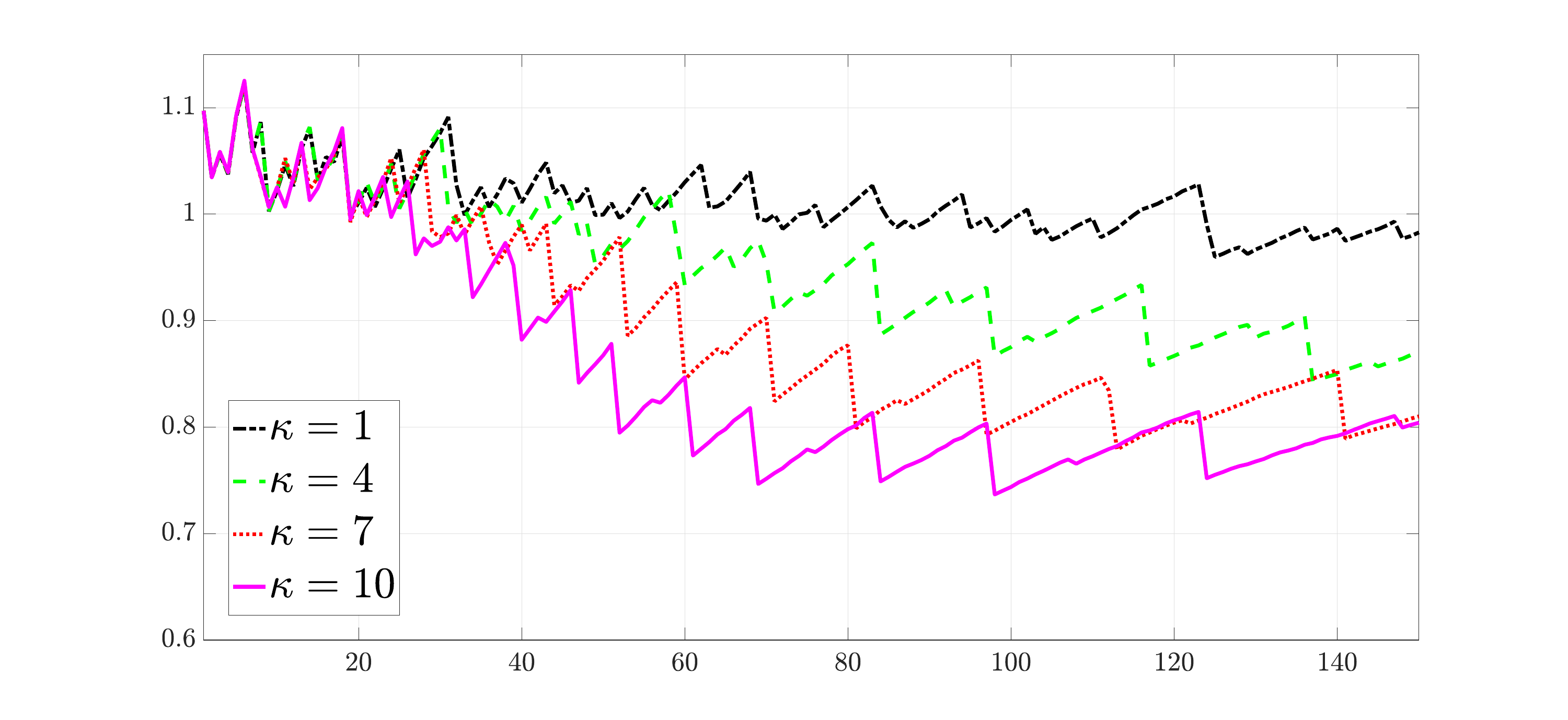}
		\centering
		\caption{SQ versus time for $\kappa$.}
		\label{crec_irreg_kappa}
	\end{subfigure}
	\hspace{1cm}
	\begin{subfigure}[b]{0.45\linewidth}
			\includegraphics[width=9cm, height=5cm]{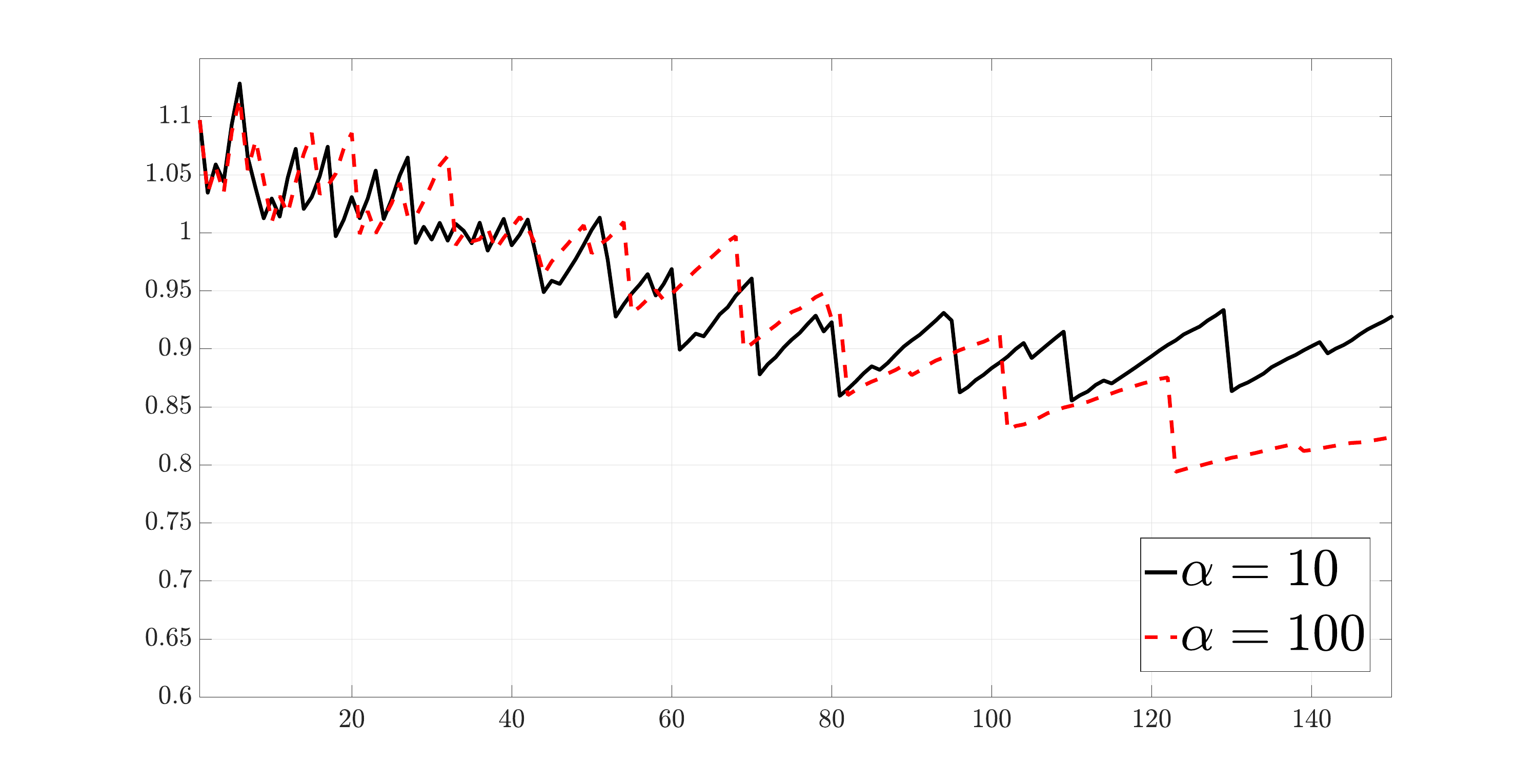}
		\centering
		\caption{SQ versus time for $\alpha$.}
		\label{crec_irreg_alpha}
	\end{subfigure}
\end{figure}
\begin{figure}[H]\ContinuedFloat
		\centering
	\begin{subfigure}[b]{0.45\linewidth}
			\includegraphics[width=9cm, height=5cm]{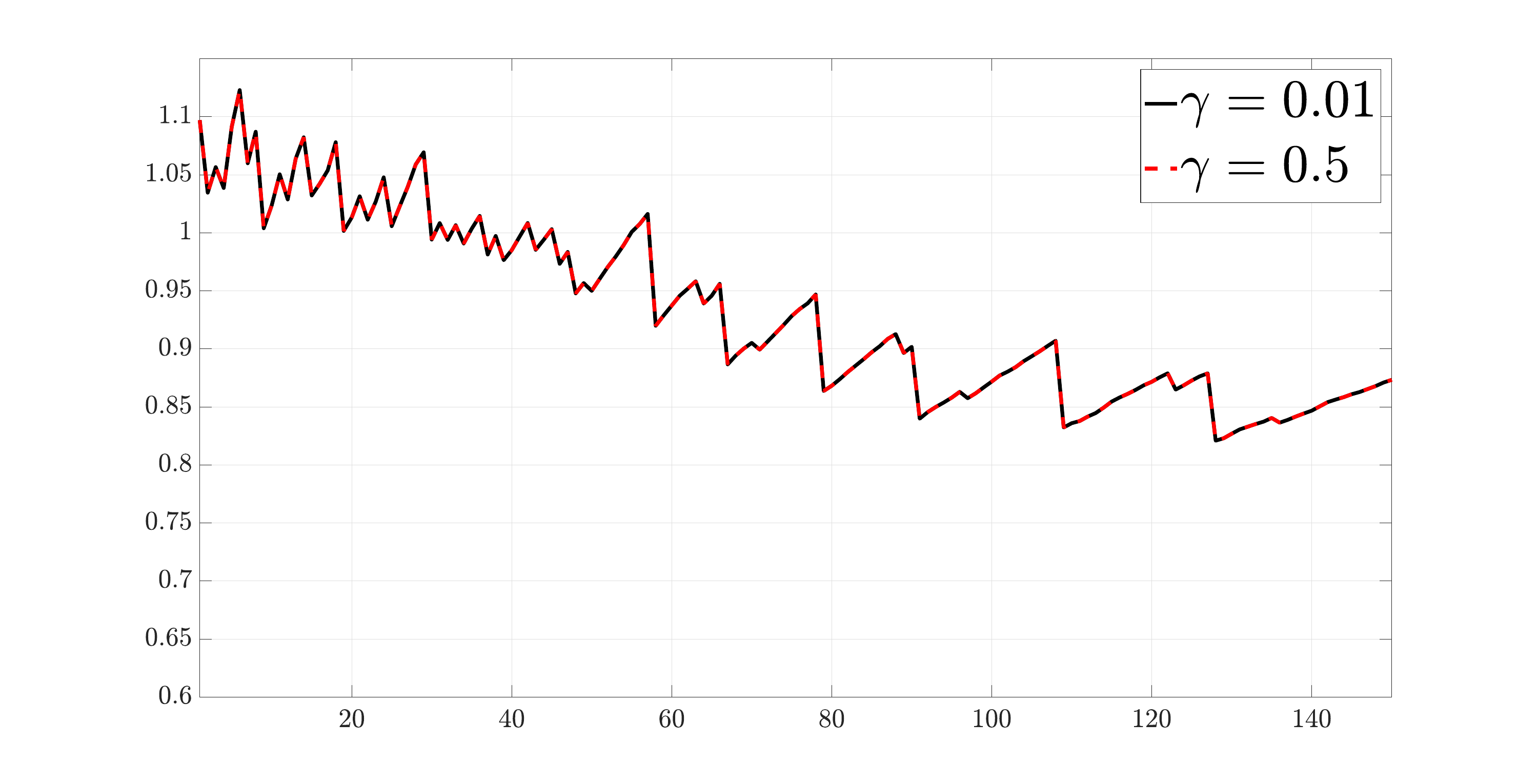}
		\centering
		\caption{SQ versus time for $\gamma$.}
		\label{crec_irreg_gamma}
	\end{subfigure}
\hspace{1cm}
	\begin{subfigure}[b]{0.45\linewidth}
	\includegraphics[width=9cm, height=5cm]{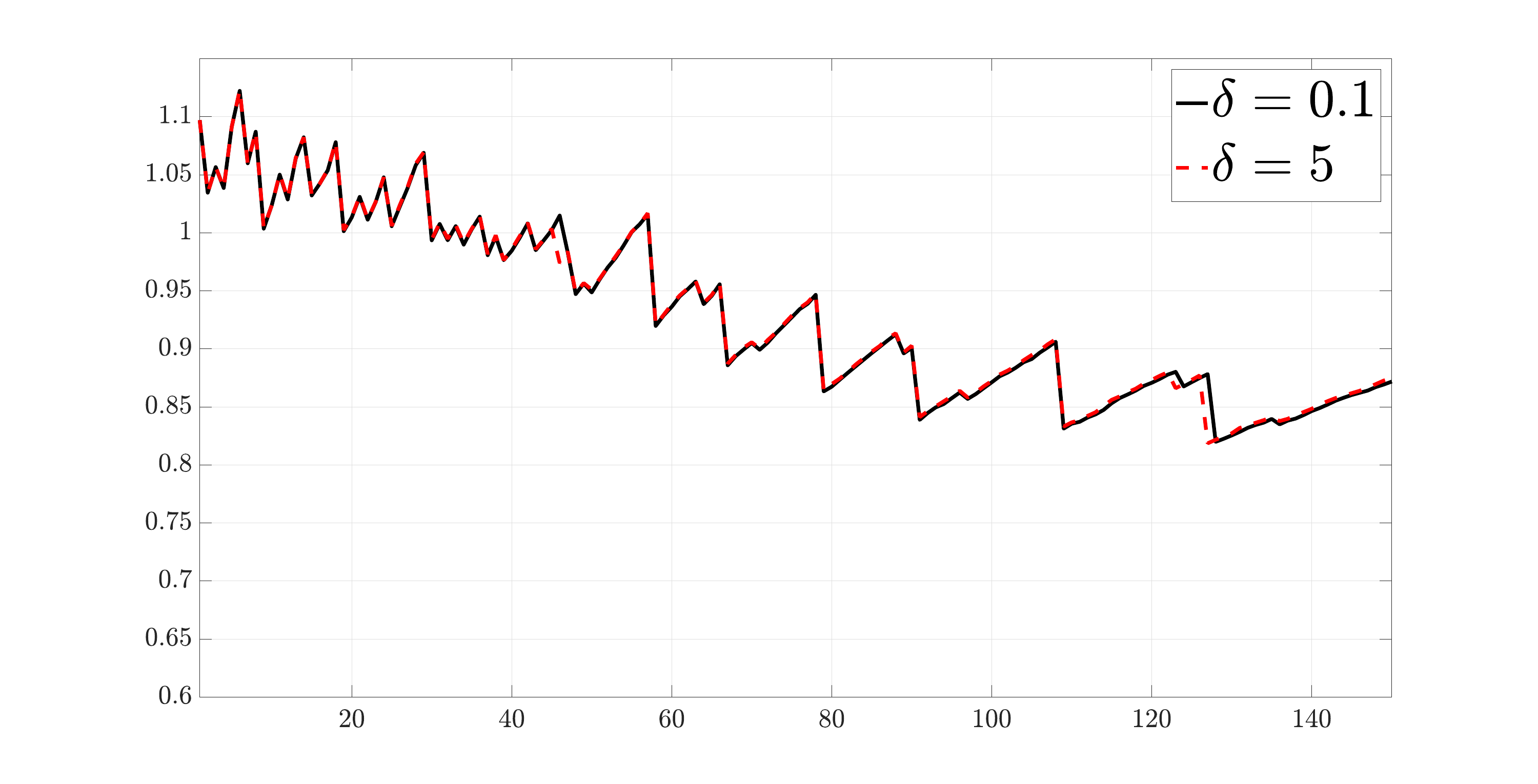}
	\centering
	\caption{SQ versus time for $\delta$.}
	\label{crec_irreg_delta}
\end{subfigure}
	\centering
\caption[]{SQ versus time for $\kappa_1$, $\alpha$, $\gamma$ and $\delta$.}
\label{SQ_kappa1_alpha_beta1_gamma_delta_beta2_2}
\end{figure}
\begin{obs}
	By the size of mesh considered, at the beginning of the pictures given in Figure $\ref{SQ_kappa1_alpha_beta1_gamma_delta_beta2_2}$, the value of SQ is larger than $1$ and it is observed oscillations in the graphs of SQ. Indeed, if we consider a mesh size smaller, these initial values of SQ and the oscillations can be reduced. In order to check this, we show an example of SQ versus time for different $\kappa$ considering a mesh size smaller:
	\begin{figure}[H]
		\centering
					\includegraphics[width=9cm, height=5cm]{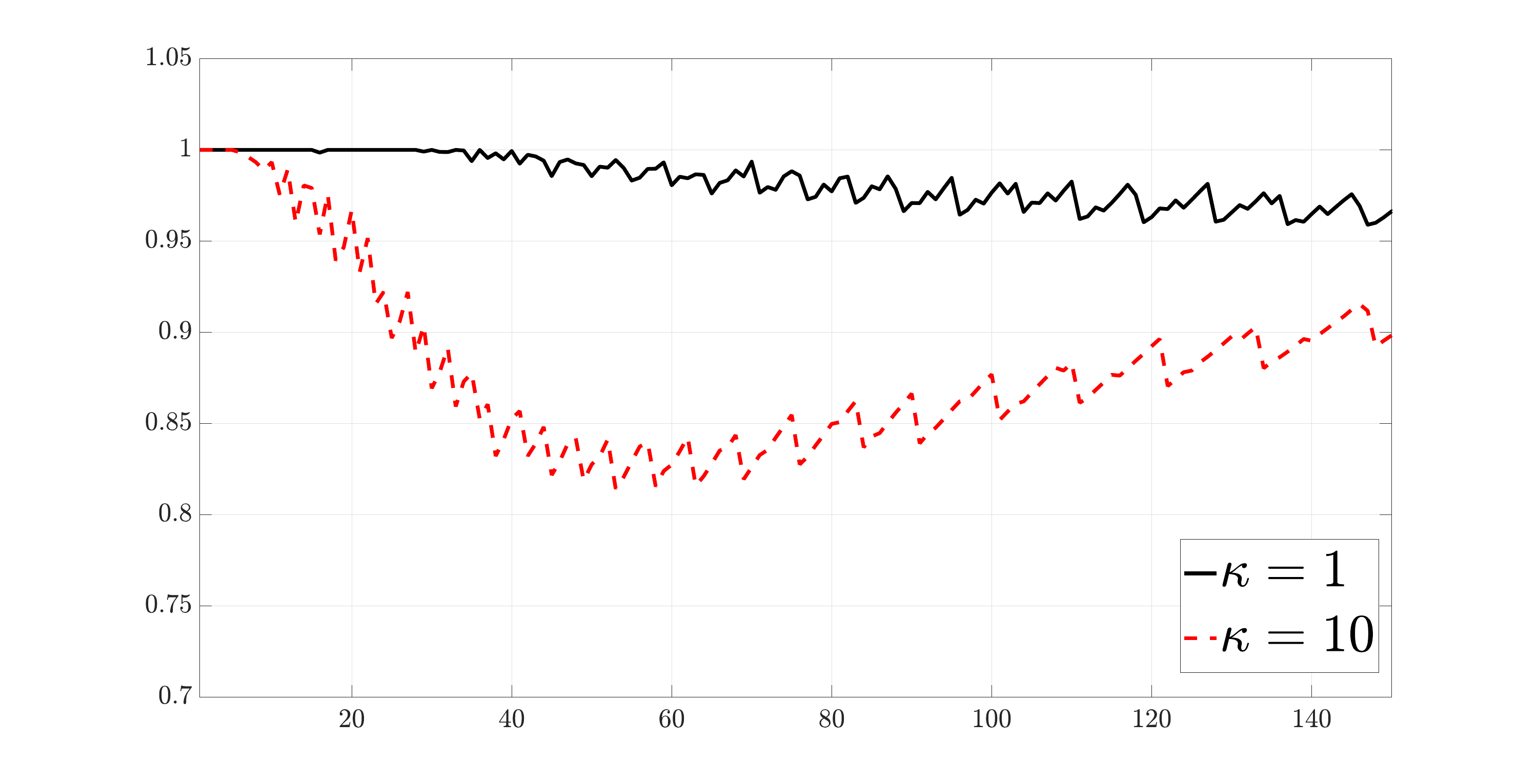}
			\centering
			\caption{SQ versus time for $\kappa$ using a mesh size smaller.}
			\label{crec_irreg_kappa_malla_fina}
	\end{figure}
However, we think that it is not necessary the use of a mesh size smaller since we obtain the same behaviour (in average) for the mesh considered initially and with this mesh, we reduce the computational time. 
\end{obs}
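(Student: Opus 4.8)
The plan is to read this remark as a precise statement about the discretization error committed when the geometric functional SQ of $\left(\ref{SQ}\right)$ is evaluated on the fully discrete solution, and to show that the dominant error scales like $O\!\left(h/R(t)\right)$, where $R(t)$ is the radius of the support of the total tumour $T+N$ at time $t$. This single estimate accounts for all three assertions at once: the spurious overshoot $\mathrm{SQ}>1$ at early times (when $R(t)$ is comparable to $h$), the oscillations (a mesh-quantization effect of order $h/R(t)$), and the robustness of the averaged behaviour (since $h/R(t)\to 0$ both as $h\to 0$ and as $t$ grows). First I would treat the numerator and denominator of $\left(\ref{SQ}\right)$ separately.

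For the numerator $\int_\Omega (T+N)_{\min}\,dx$, note from $\left(\ref{Tmin}\right)$ that $(T+N)_{\min}$ is the nodal indicator of the thresholded node set $\Omega_h(t)=\{a\in\mathcal{N}_h:\ (T_h+N_h)(a)\ge 0.001\}$, integrated by the mass-lumping rule, so its discrete value is $\sum_{a\in\Omega_h(t)}\int_\Omega\varphi_a$. I would compare this with the continuous area $A(t)=|\{T+N\ge 0.001\}|$: since nodal thresholding rounds each boundary element up or down by at most one mesh layer, the error concentrates in a band of width $O(h)$ around the level set $\{T+N=0.001\}$, giving $|A_h(t)-A(t)|=O(h\,\mathrm{Per}(t))$ with $\mathrm{Per}(t)$ the perimeter of the region. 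For the denominator I would analyse $R_{\max}$ of $\left(\ref{Rmax}\right)$: being the maximum node-to-centre distance over active nodes, it is quantized to the mesh and generically \emph{under}estimates the true support radius, $R(t)-h\lesssim R_{\max}\le R(t)$. The crucial point is the opposite sign of the two biases: mass-lumping attributes a full nodal volume to a partially covered boundary element, so the numerator tends to be \emph{over}estimated, while the denominator is \emph{under}estimated. Dividing these pushes the discrete SQ above its continuous upper bound $1$ precisely when the relative errors $O(h/R(t))$ fail to be small, i.e.\ when $R(t)=O(h)$ at the start of the run.

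Next I would explain the oscillations and the averaging. As the tumour front advances, active nodes cross the threshold layer by layer, so $R_{\max}$ grows in discrete jumps of size $\approx h$ while the numerator grows smoothly; the quotient therefore develops a sawtooth of amplitude $O(h/R(t))$ whose period is the time needed for the front to sweep one mesh layer. Both the overshoot and this amplitude decay like $h/R(t)$, which is exactly the trend the finer-mesh computation of Figure $\ref{crec_irreg_kappa_malla_fina}$ is meant to confirm. Finally, since $A_h(t)\to A(t)$ and $\pi R_{\max}^2\to\pi R(t)^2$ as $h\to 0$ away from the isolated jump times, the discrete SQ converges pointwise to the continuous one, and the time-average $\frac{1}{T_f}\int_0^{T_f}\mathrm{SQ}_h\,dt$ converges to the continuous average, the oscillatory part cancelling in the mean with a residual bias of order $h$. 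This justifies the concluding sentence: the averaged regularity indicator is mesh-independent to leading order, so the coarse mesh $h=0.4$ reproduces the correct qualitative trend while the cost, scaling like $h^{-2}$ in the two-dimensional simulations, is drastically smaller.

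The hard part will be that SQ is a \emph{nonsmooth, geometric} functional of the discrete solution — a thresholded indicator in the numerator and a nodal maximum of distances in the denominator — so the standard finite-element error machinery (energy estimates, C\'ea-type bounds) does not apply directly. The argument must instead be carried out at the level of level-set geometry and nodal quantization, and the delicate step is to make rigorous the claimed \emph{sign} of the bias (numerator over, denominator under) under mass-lumping on acute triangulations, since it is this sign that forces $\mathrm{SQ}>1$ rather than $\mathrm{SQ}<1$. A clean way to control it would be to exploit that, at early times, the tumour support stays close to a disk — consistent with the radially symmetric initial datum of Figure $\ref{tumor_incial}$ — thereby reducing both estimates to elementary geometry of a disk of radius $R(t)$ sampled on a quasi-uniform mesh of size $h$.
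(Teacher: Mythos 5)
Your proposal is sound in spirit, but be aware that the paper does not prove this remark at all: its entire justification is empirical, namely the single finer-mesh computation displayed in Figure \ref{crec_irreg_kappa_malla_fina}, which is offered as a consistency check that the overshoot $\mathrm{SQ}>1$ and the oscillations shrink under mesh refinement while the averaged curves are unchanged. What you supply instead is an a priori quantitative explanation: decomposing the error of the functional $\left(\ref{SQ}\right)$ into a thresholded-area error $O(h\,\mathrm{Per}(t))$ in the numerator of $\left(\ref{Tmin}\right)$ and a nodal quantization error $O(h)$ in $\textbf{R}_{\max}$ of $\left(\ref{Rmax}\right)$, with opposite biases, yielding a relative error $O(h/R(t))$ that simultaneously predicts the early-time overshoot (when $R(t)=O(h)$), the sawtooth oscillations as the front crosses mesh layers, and the mesh-robustness of the time average. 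This buys genuinely more than the paper: a predictive scaling law (amplitude decaying both in $h$ and in $t$) rather than one confirming experiment, and it correctly isolates why the authors' coarse mesh $h=0.4$ suffices for the averaged indicator. The paper's experiment, conversely, buys certainty for the actual nonlinear discrete dynamics, which your argument does not fully deliver: as you yourself concede, the sign of the two biases under mass-lumped thresholding is asserted rather than proved (nodes just below the threshold can also \emph{under}count the area, so $\mathrm{SQ}>1$ is not forced in general), and your reduction to a sampled disk is only valid while the support stays near the radially symmetric initial datum of Figure \ref{tumor_incial} --- fortunately this is precisely the early-time regime where the overshoot occurs, so the gap is localized and your heuristic is consistent with, and explanatory of, exactly what the paper's Figure \ref{crec_irreg_kappa_malla_fina} exhibits. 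If you wanted to close the argument rigorously you would still need a maximum-norm or level-set convergence statement for the discrete solution near the threshold value $0.001$, which neither you nor the paper provides.
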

We see in Figs $\ref{crec_irreg_kappa}-\ref{crec_irreg_delta}$ how our model differentiates two kinds of tumor growth changing the parameter $\kappa$, see Figure $\ref{crec_irreg_kappa}$, and with lower variation for $\alpha$, see Figure $\ref{crec_irreg_alpha}$. On the other hand, we do not appreciate changes in the variation of parameters $\gamma$ and $\delta$ for the irregularity of tumor growth as we see in Figs $\ref{crec_irreg_gamma}-\ref{crec_irreg_delta}$.

\subsubsection{Area}

Once we have identified that the more important parameters for the regularity surface are firstly $\kappa$ and later $\alpha$, we measure the area of total tumor for these parameters as in Table $\ref{parametros_variables_crec_reg_irreg}$:

%

\begin{figure}[H]
	\centering
	\begin{subfigure}[b]{0.45\linewidth}
			\includegraphics[width=9cm, height=5cm]{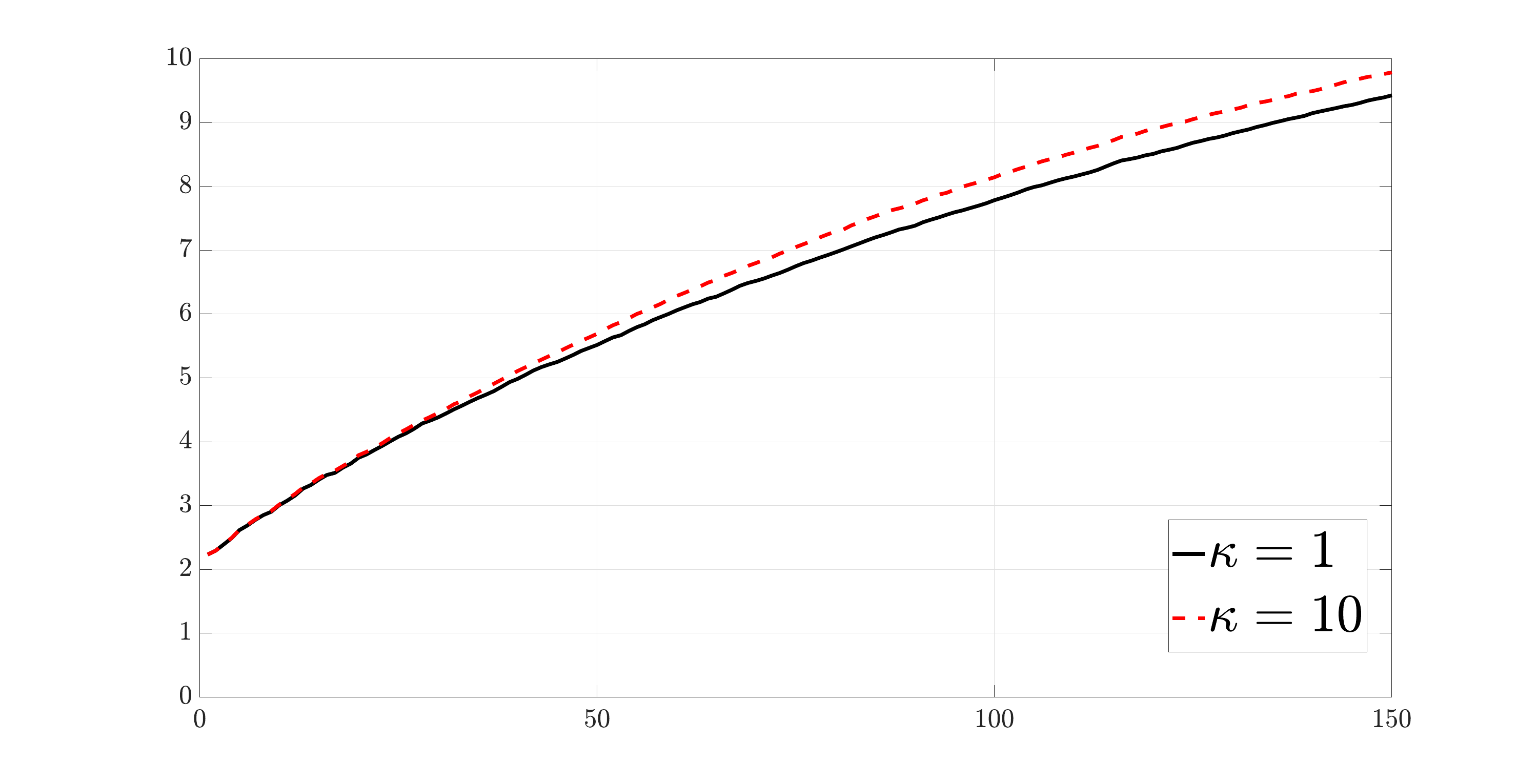}
		\centering
		\caption{Area of total tumor versus time for $\kappa$.}
		\label{area_kappa}
	\end{subfigure}
	\hspace{1cm}
	\begin{subfigure}[b]{0.45\linewidth}
			\includegraphics[width=9cm, height=5cm]{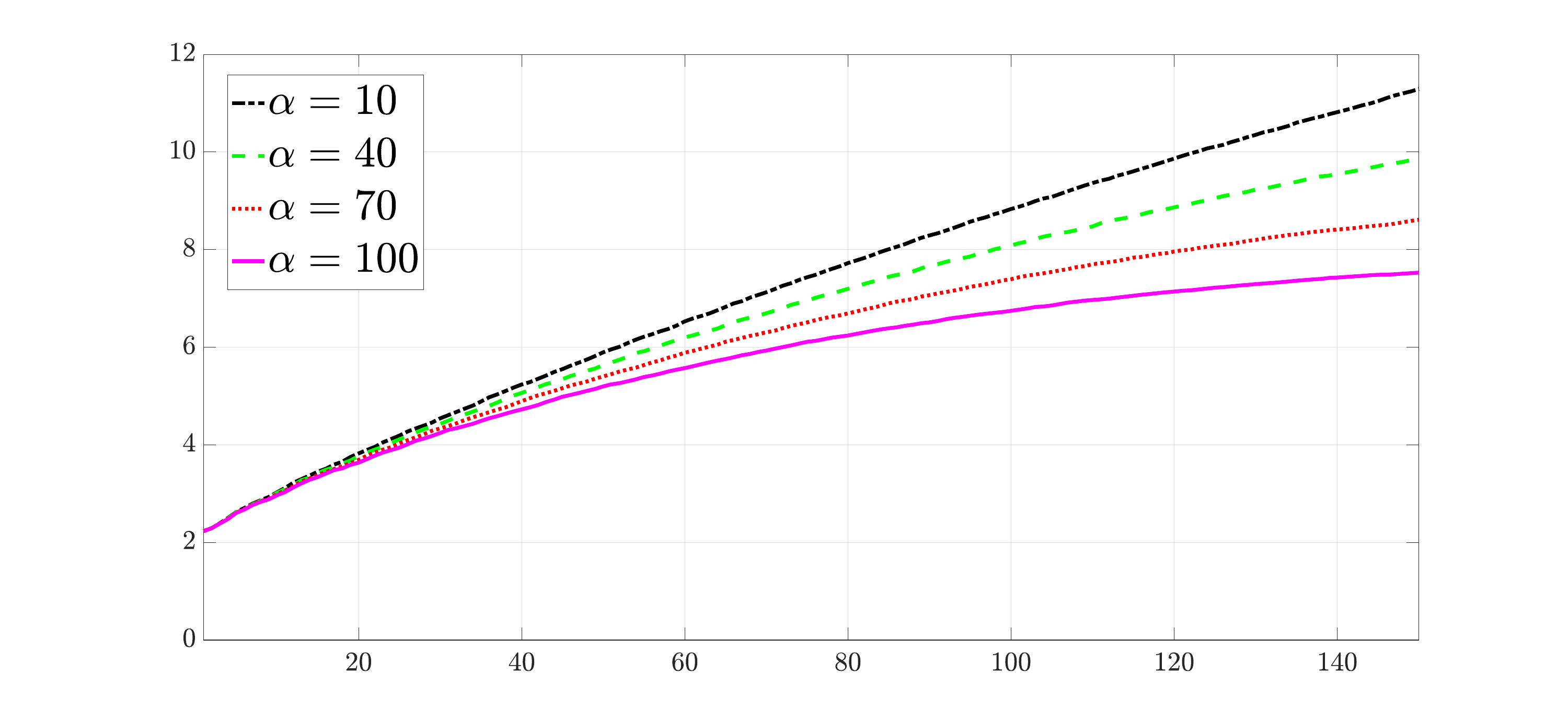}
		\centering
		\caption{Area of total tumor time for $\alpha$.}
		\label{area_alpha}
	\end{subfigure}
	\centering
	\caption{Area of total tumor versus time for $\kappa$ and $\alpha$.}
	\label{area_crec_necrosis_tumor}
\end{figure}

We see in Figure $\ref{area_crec_necrosis_tumor}$ how the largest area corresponds to the smallest $\alpha=10$ and the smallest area holds for the highest $\alpha=100$. In the case of variation of $\kappa$, Figure $\ref{area_kappa}$, a similar influence in the total tumor area for $\kappa=1$ and $\kappa=10$ is observed.
\\

Thus, we have obtained a higher variation of total area for the different values of $\alpha$ than for $\kappa$, see Figure $\ref{area_crec_necrosis_tumor}$. Nevertheless, in the simulation of the "surface quotient" (SQ), we obtained more variation between the different values of $\kappa$ that for the different values of $\alpha$, see Figure $\ref{SQ_kappa1_alpha_beta1_gamma_delta_beta2_2}$. Hence, the factor which modifies this change is $\textbf{R}_{\max}$, defined by $\ref{Rmax}$. In fact, $\textbf{R}_{\max}$ will change more with the variation of $\kappa$ than for the variation of $\alpha$.

\subsubsection{Tumor growth}\label{tumorgrwoth}
Here, we examine the tumor growth for $\kappa=10$ in five times step in order to see the variation in space of tumor. For this growth, the rest of parameters take the fixed values showed in Table $\ref{parametros_variables_crec_reg_irreg}$.
\\

\begin{figure}[H]
	\centering
	\begin{subfigure}[b]{0.15\linewidth}
		\includegraphics[width=1.2\linewidth]{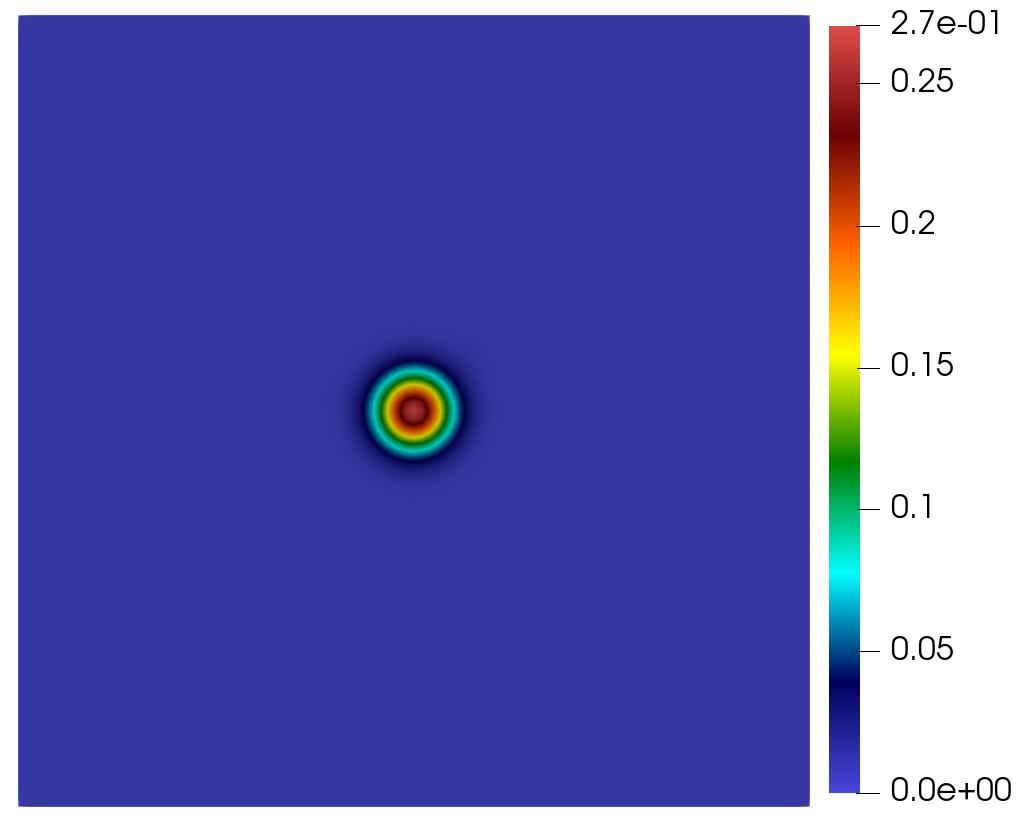}
		\centering
		\caption{$t=50$}
		\label{kappa_100_a}
	\end{subfigure}
	\hspace{0.4cm}
	\begin{subfigure}[b]{0.15\linewidth}
		\includegraphics[width=1.2\linewidth]{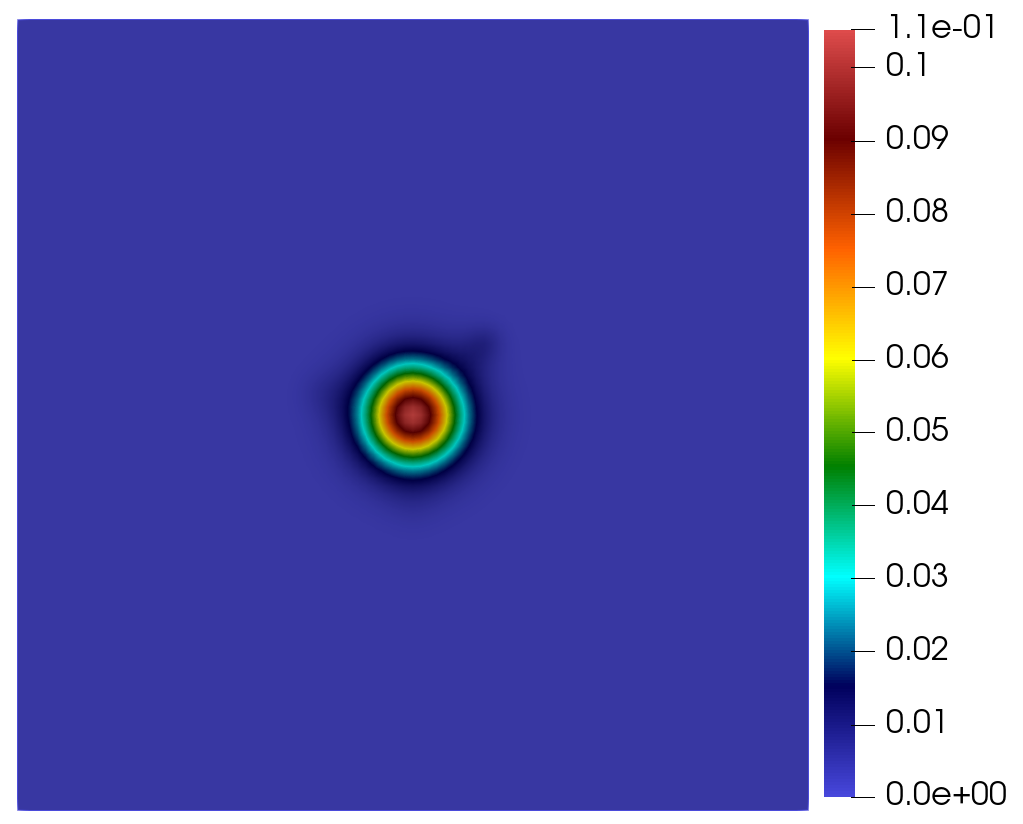}
		\centering
		\caption{$t=100$}
		\label{kappa_100_b}
	\end{subfigure}
	\hspace{0.4cm}
	\begin{subfigure}[b]{0.15\linewidth}
		\includegraphics[width=1.2\linewidth]{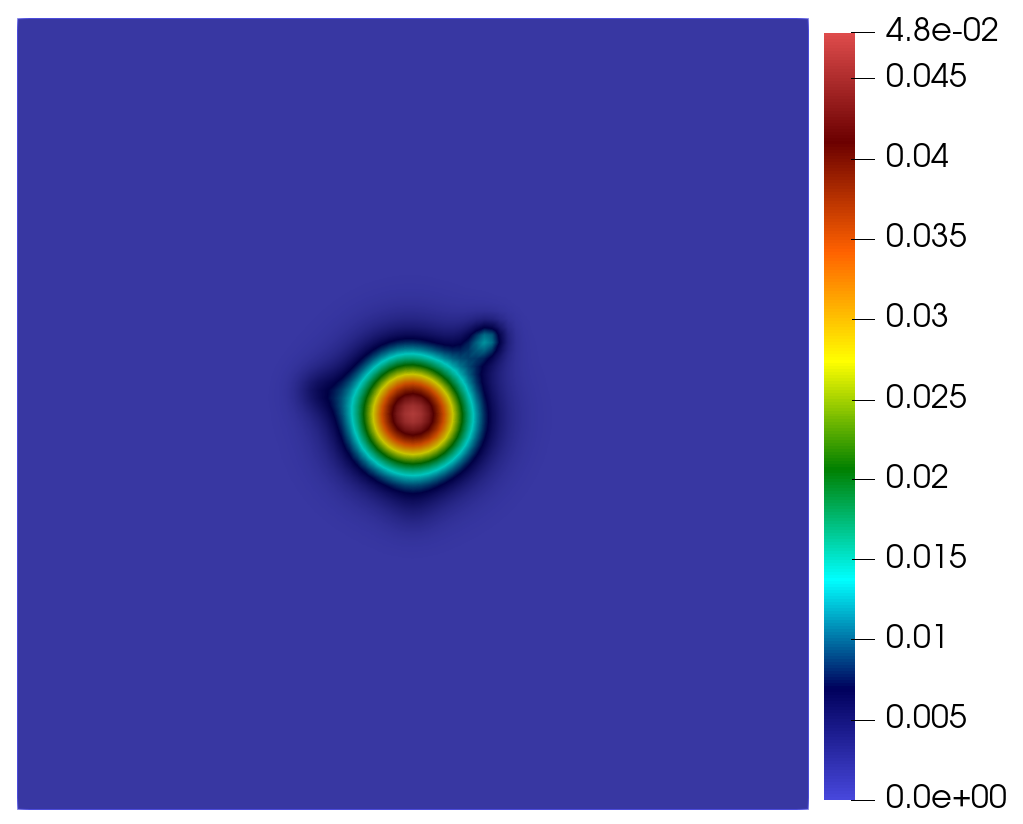}
		\centering
		\caption{$t=150$}
	\end{subfigure}
	\hspace{0.4cm}
	\begin{subfigure}[b]{0.15\linewidth}
		\includegraphics[width=1.2\linewidth]{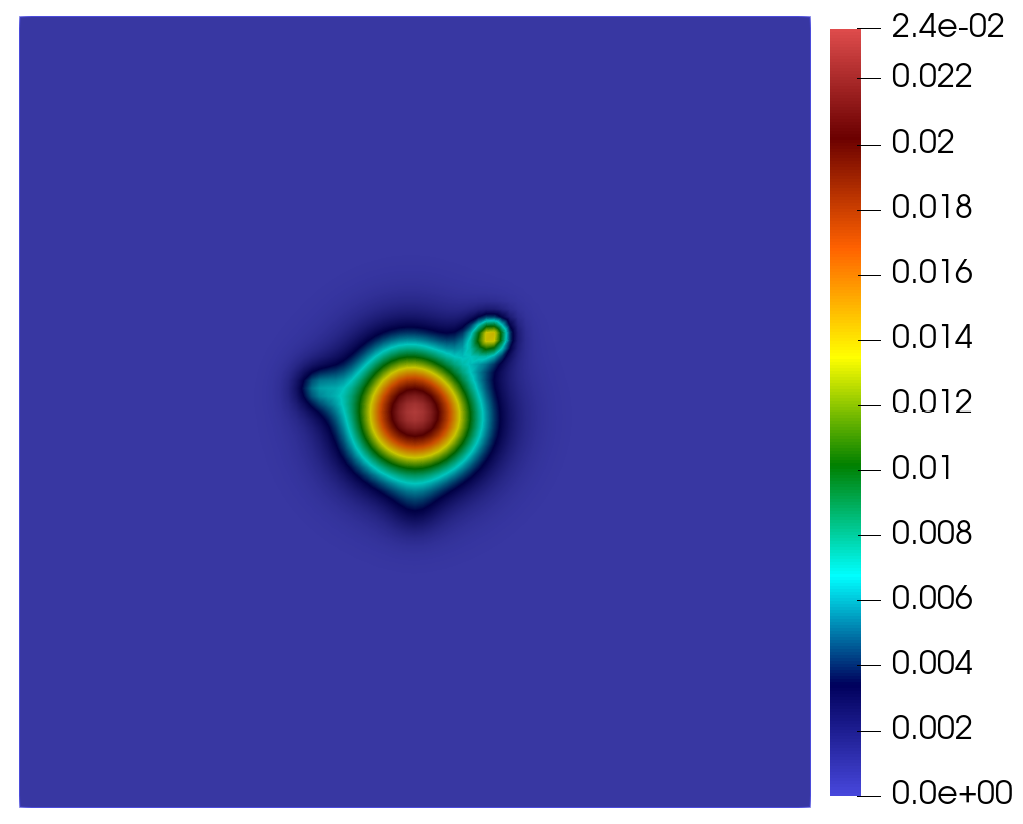}
		\centering
		\caption{$t=200$}
	\end{subfigure}
	\hspace{0.4cm}
	\begin{subfigure}[b]{0.15\linewidth}
		\includegraphics[width=1.2\linewidth]{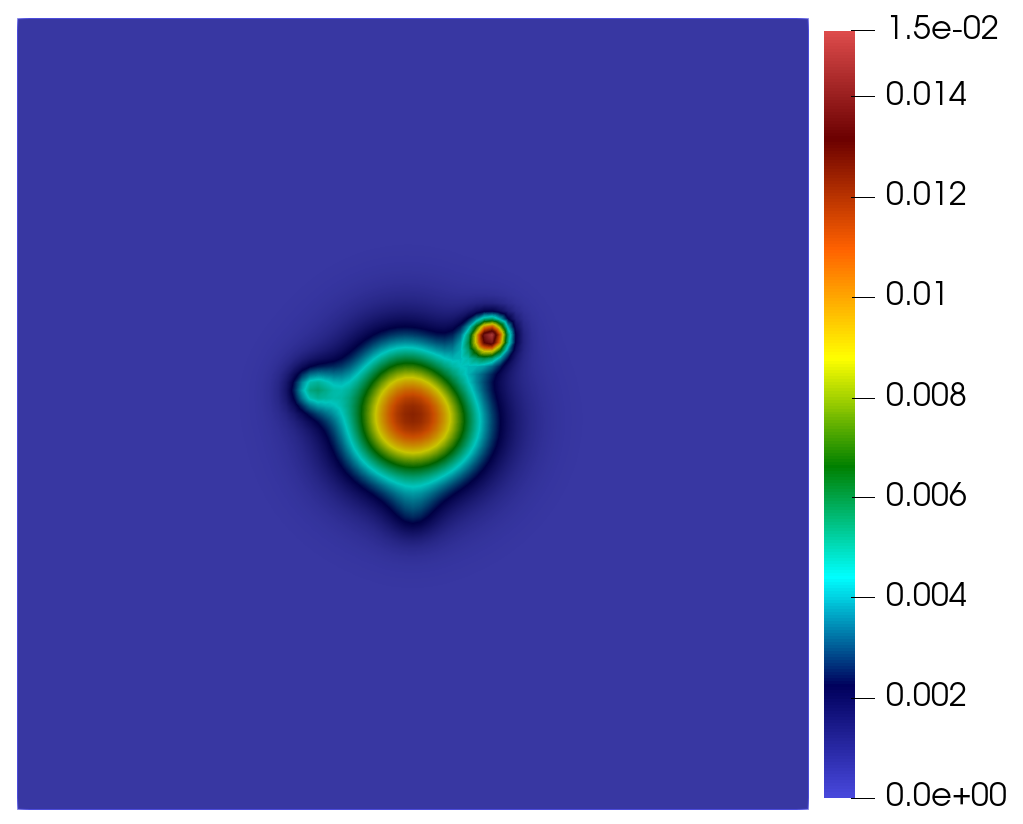}
		\centering
		\caption{$t=250$}
		\label{kappa_100_e}
	\end{subfigure}
	\caption{Irregular tumor growth for $\kappa=10$.}
	\label{crec_irreg_kappa1_100}
\end{figure}

%
We observe an irregular tumor growth for $\kappa=10$ when time increases. These results are in concordance with Figure $\ref{crec_irreg_kappa}$, where we observed a great irregularity for $\kappa=10$ and with Figure $\ref{area_kappa}$, where the area of the tumor for $\kappa=10$ is increasing. 
\\



Finally, we conclude that $\kappa$ is the more relevant parameters in the irregular surface of tumor and $\alpha$ is the most important parameter for total area in the tumor growth.

\subsection{Discussion}\label{conlcusion}

Summarizing the results obtained with respect to the ring width and the regularity surface for the chemotactic and dimensionless system $\left(\ref{prob_K_rho2}\right)$ related to GBM growth model, we deduce that this model can capture these two properties varying some parameters. Moreover, we have proved that the parameters more relevant according to the tumor growth are $\kappa$ and $\alpha$. 
\\


For the tumor ring, where the vasculature is uniformly distributed, the results show that the hypoxia parameter $\alpha$ is the most relevant coefficient as we can observe in Figs $\ref{Ring_dif_kappa}-\ref{Ring_dif_alpha}$. 
\\

In the case of regularity surface, where the vasculature is non-uniformly distributed, the parameter which produces more irregularity in the tumor surface is the chemotaxis parameter $\kappa$, see Figs $\ref{crec_irreg_kappa}-\ref{crec_irreg_delta}$. 
\\

%
%
%
Finally, after the reduction of our model $\left(\ref{probOriginal}\right)$ from $7$ initial parameters to $2$ ($\kappa$ and $\alpha$) which capture the different behaviour of tumor growth, we conclude that hypoxia coefficient $\alpha$ is the main parameter for the tumor ring and area of tumor and $\kappa$ is the most influential parameter for the regularity surface. 

\addcontentsline{toc}{section}{References}
\bibliographystyle{ws-m3as}

\section*{Appendix}\label{apendice}
In this Appendix, we will prove an Alikakos' recursive $L^\infty$ estimate. 
\\

Following the proof of Lemma $\ref{estimaciones3}$, we obtain in $\left(\ref{Linf_prueba7}\right)$ that
\begin{equation}\label{u_Linf_prueba}
\begin{array}{ll}
\displaystyle\max_{t\in (0,T_f)}\|u\|_{L^p\left(\Omega\right)}^p\leq \widetilde{C}\; \max\Big\{\left(p^2+1\right)\;\max_{t\in\left(0,T_f\right)}\|u\|_{L^{p/2}\left(\Omega\right)}^p,\;\|u_0\|_{L^\infty\left(\Omega\right)}^p\Big\}.
\end{array}
\end{equation}

In \cite{Alikakos_1979}, the authors obtained an estimate starting from an estimate like $\left(\ref{u_Linf_prueba}\right)$ but with power $p$ instead of $p^2$. Taking in $\left(\ref{u_Linf_prueba}\right)$ $p=2^k$ for all $k\geq1$, it holds that,

\begin{equation}
\nonumber
\begin{array}{c}
\hspace{2cm}\displaystyle \max_{t\in (0,T_f)}\int_{\Omega}u^{2^k}\;dx\leq C\;\max\Big\{\left(2^{2\;k}+1\right)\;\max_{t\in\left(0,T_f\right)}\left(\int_{\Omega}u^{2^{k-1}}\;dx\right)^2,\;\|u_0\|_{L^{\infty}\left(\Omega\right)}^{2^k}\Big\}\\
\\
\displaystyle\leq C\;C^2\;\max\left\{\left(2^{2\;k}+1\right)\;\left[\max\left\{\left(2^{2\;\left(k-1\right)}+1\right)\;\max_{t\in\left(0,T_f\right)}\left(\int_{\Omega}u^{2^{k-2}}\;dx\right)^2,\right.\right.\right.
\end{array}
\end{equation}
\begin{equation}
\nonumber
\begin{array}{c}
\displaystyle\Big.\left.\left.\|u_0\|_{L^{\infty}\left(\Omega\right)}^{2^{k-1}}\Bigg\}\right]^2,\;\|u_0\|_{L^{\infty}\left(\Omega\right)}^{2^k}\right\}
\displaystyle\\
\\
\leq C\;C^2\;C^{2^2}\;\max\left\{\left(2^{2\;k}+1\right)\;\left(2^{2\;\left(k-1\right)}+1\right)^2\;\left(\max_{t\in\left(0,T_f\right)}\int_{\Omega}u^{2^{k-3}}\;dx\right)^{2^2},\right.\\
\\
\left.\;\left(2^{2\;k}+1\right)\;\|u_0\|_{L^{\infty}\left(\Omega\right)}^{2^k}\right\}\\
\\
\displaystyle\leq C\;C^2\;C^{2^2}\;C^{2^3}\;\max\left\{\left(2^{2\;k}+1\right)\;\left(2^{2\;\left(k-1\right)}+1\right)^2\;\left(2^{2\;\left(k-2\right)}+1\right)^{2^3}\;\max_{t\in\left(0,T_f\right)}\left(\int_{\Omega}u^{2^{k-3}}\;dx\right)^{2^3},\right.
\end{array}
\end{equation}
\begin{equation}\label{Linf_prueba9}
\begin{array}{c}
\displaystyle\left.\left(2^{2\;k}+1\right)\;\left(2^{2\;\left(k-1\right)}+1\right)^2\;\|u_0\|_{L^{\infty}\left(\Omega\right)}^{2^k}
\right\}\leq\ldots\leq\\
\\
\displaystyle\leq\left(C\left(2^{2\;k}+1\right)\right)\;\left(C\left(2^{2\;\left(k-1\right)}+1\right)\right)^2\;\left(C\left(2^{2\;\left(k-2\right)}+1\right)\right)^{2^2}\ldots\left(C\left(2^2+1\right)\right)^{2^{k-1}}\;\widetilde{K}^{2^k}.
\end{array}
\end{equation}
where $\widetilde{K}$ is the constant that dominates $\|u\|_{L^{1}\left(\Omega\right)}$ for all time, since $u\in L^\infty\left(0,T_f;\;L^1\left(\Omega\right)\right)$ (using Lemma $\ref{estimaciones2}$, taking into account that $\|u_0\|_{L^{\infty}\left(\Omega\right)}$ and the hypothesis $\left(\ref{hipotesis0}\right)$). Thus, from $\left(\ref{Linf_prueba9}\right)$

\begin{equation}\label{Linf_prueba10}
\begin{array}{c}
\displaystyle \max_{t\in (0,T_f)}\int_{\Omega}u^{2^k}\;dx\leq
\left(a\;2^{2\;k}\right)\left(a\;2^{2\left(k-1\right)}\right)^2\left(a\;2^{2\left(k-2\right)}\right)^{2^2}\left(a\;2^{2\left(k-3\right)}\right)^{2^3}\ldots\left(a\;2^2\right)^{2^{k-1}}\widetilde{K}^{2^k}.
\end{array}
\end{equation}
for a certain $a\geq3\;C$ since $C\left(2^{2\left(k-j\right)}+1\right)\leq a\;2^{2\;k}$ if $a\geq3\;C$ for all $j=0,\ldots,k-1$. Thus, we can express $\left(\ref{Linf_prueba10}\right)$ as

\begin{equation}\label{Linf_prueba11}
\begin{array}{c}
\displaystyle \max_{t\in (0,T_f)}\int_{\Omega}u^{2^k}\;dx\leq \displaystyle a^{\sum_{j=0}^{k-1}2^j}\;2^{2\;\sum_{j=0}^{k-1}\left(k-j\right)2^j}\;\widetilde{K}^{2^k}=a^{2^k-1}\;2^{\left(-k-6+2^{k+1}\right)}\;\widetilde{K}^{2^k}.
\end{array}
\end{equation}

Taking the limit as $k\rightarrow+\infty$ of the $1/2^k$ power of both sides of $\left(\ref{Linf_prueba11}\right)$ we obtain

\begin{equation}\label{Linf_prueba12}
\begin{array}{c}
\displaystyle \max_{t\in (0,T_f)}\|u\|_{L^\infty\left(\Omega\right)}\leq \lim_{k\to+\infty}\left(a^{\frac{2^k-1}{2^k}}\;2^{\frac{\left(-k-6+2^{k+1}\right)}{2^k}}\;\widetilde{K}\right)=a\;2^2\;\widetilde{K}.
\end{array}
\end{equation}

Hence,
$$u\in L^\infty\left(0,+\infty;\;L^\infty\left(\Omega\right)\right).$$

\end{document}